\DeclareMathOperator{\sign}{sign}
\theoremstyle{plain}
\newtheorem{thm}{Theorem}[section]
\newtheorem{assumption}[thm]{Assumption}
\newtheorem{lem}[thm]{Lemma}
\newtheorem{cor}[thm]{Corollary}
\newtheorem{prop}[thm]{Proposition}
\theoremstyle{definition}
\newtheorem{defn}[thm]{Definition}
\theoremstyle{remark}
\numberwithin{equation}{section}
\numberwithin{equation}{section}
\newcommand{\vertiii}[1]{{\left\vert\kern-0.25ex\left\vert\kern-0.25ex\left\vert #1 
    \right\vert\kern-0.25ex\right\vert\kern-0.25ex\right\vert}}
\newcommand{\spn}{\operatorname{span}}
\title[Monotonicity formula]{A Monotonicity Formula for the Fractional Laplacian and Instability Results for the Shrira Equation}
\author[A. J. Méndez ]{A. J. Méndez}
\address{Universidade Estadual de Campinas, Rua Sérgio Buarque de Holanda, 651, cep 13083-859, Campinas, S\~{a}o Paulo, Brasil.}
\email{ajmendez@ime.unicamp.br}
\thanks{A. J. Méndez was supported by the National Council for Scientific and Technological Development (CNPq, Brazil) through the Research Productivity Fellowship, grant No.~304088/2024-2, and by the São Paulo Research Foundation (FAPESP-Fundação de Amparo à Pesquisa do Estado de São Paulo), grant No.~2024/20513-7.}
\author[O. Ria\~no]{Oscar Ria\~no}
\thanks{Oscar Ria\~no was supported by the São Paulo Research Foundation (FAPESP-Fundação de Amparo à Pesquisa do Estado de São Paulo), grant No.~2024/20513-7}
\address{Departamento de Matem\'aticas, Universidad Nacional de Colombia, Bogot\'a, Colombia}
\curraddr{}
\email{ogrianoc@unal.edu.co}
\subjclass[2020]{35Q53, 37K40, 37K45} 
\keywords{Benjamin-Ono equation, Shrira equation, Commutator Estimates, Instability of Solitons, Monotonicity}
\date{\today}
\begin{document}
\begin{abstract}
By employing a new class of pseudo-differential operators introduced in a previous work, we establish a novel monotonicity formula for the fractional Laplacian $|\nabla_x|^\alpha$ in $\mathbb{R}^n$, with $n \geq 2$ and $\alpha \in [1,2)$,  This framework enables us to localize our analysis to specific regions of Euclidean space where monotonicity properties of the $L^2$-mass of solutions to dispersive equations with fractional dispersion are preserved.

As an application, we focus on the Shrira equation, proving conditional instability results for its traveling wave solutions in the critical regime. We deduce key virial-type estimates that govern the long-time behavior of the $L^2$-mass. As a consequence, we establish instability results without requiring pointwise decay estimates employed in previous works. Our approach provides a robust and flexible method for monotonicity techniques to higher dimensions, shedding light on the delicate interplay between nonlocal dispersion and spatial localization in nonlinear dispersive models.
\end{abstract}
	\maketitle
\section{Introduction}
We consider the higher-dimensional \emph{fractional Zakharov–Kuznetsov (fZK)} equation 
\begin{equation}\label{E:fZK}
	\partial_t u - \partial_{x_1} |\nabla_x|^{\alpha} u + \frac{1}{2} \partial_{x_1}(u^2) = 0, \quad (x_1,\dots,x_n)\in \mathbb{R}^n, \quad t\in \mathbb{R}, \quad \alpha\in [1,2),
\end{equation}
where $|\nabla_x|^{\alpha}$ denotes the \emph{fractional Laplacian}, whose symbol in Fourier space is given by
\begin{equation*}
	\widehat{(|\nabla_x|^{\alpha}f)}(\xi) := |\xi|^{\alpha} \widehat{f}(\xi), \quad \xi \in \mathbb{R}^n,\quad f \in \mathcal{S}(\mathbb{R}^n).
\end{equation*}
The model \eqref{E:fZK} arises in various physical and mathematical contexts, where the flexibility of the parameter $\alpha$ and the spatial dimension $n$ allows for the analysis of the interplay between dispersive and nonlinear effects in multiple variable regimes. Regarding well-posedness, we refer to \cite{HerrIoneKeniKoch2010,ABFS1989,KillpLaurensVisan2024,Schippa2020,HickmanLinRia2019,FonsLinaresPonce2013,KenigPonceVega1991,molinetPilodBO,KenigIonescu2007,IfrimTata2019}. For further developments, see \cite{Angulo2018,kenig2011local,RianoRoudenkoYang2022,KenigMartel2009,RianoRoudenko2022,Mendez2020,KenigPilodPonceVega2020,Eychenne2023,Mendez2023} and references therein. 

When $\alpha=2$, one has that $|\nabla_x|^{2}=-\Delta$, the standard Laplacian, then in one spatial dimension the equation \eqref{E:fZK} reduces to the classical \emph{Korteweg-de Vries (KdV)} equation, while for dimensions $n \geq 2$, it coincides with the \emph{Zakharov-Kuznetsov (ZK)} equation (see \cite{ZakharovKuznet1974}).

Referring to special solutions of \eqref{E:fZK}, the solitary-wave solutions for the equation \eqref{E:fZK}  are of the form 
\begin{equation}\label{travequ}
u(x_1,\dots,x_n,t)=Q_c(x_1-c\,t,\dots,x_n), 
\end{equation}
where $c>0$ denotes the \emph{speed of propagation}. One has the scaling factor $Q_c(x)=c Q(c^{1/\alpha} x)$, $x\in \mathbb{R}^n$, where $Q$ is a real-valued vanishing at infinity solution of the elliptic equation
\begin{equation}\label{EQ:groundState}
Q+|\nabla_x|^{\alpha}Q-\tfrac{1}{2}Q^{2}=0, \quad x\in \mathbb{R}^n.
\end{equation}
The results in \cite{RianoRoudenko2022,Maris2002,FrankLenzmann2013,FrankLenzmannSilvestre2016,EychenneValet2023} establish the existence of a unique (up to translation) positive radial solution $Q$ to \eqref{EQ:groundState} such that $
	Q \in H^{\infty}(\mathbb{R}^n) := \bigcap_{s \in \mathbb{N}} H^s(\mathbb{R}^n),$
and $Q$ satisfies the polynomial decay estimate
\begin{equation}\label{poldecayGS}
	\frac{C_1}{1 + |x|^{n+\alpha}} \leq Q(x) \leq \frac{C_2}{1 + |x|^{n+\alpha}},\quad x\in\mathbb{R}^{n}
\end{equation}
 and some constants $C_2 \geq C_1 > 0$. Moreover, for any multi-index $\beta$, the derivatives of $Q$ satisfy the decay estimate
\begin{equation}\label{Qdecay}
	|\partial_x^{\beta} Q(x)| \lesssim \langle x \rangle^{-n-\alpha - |\beta|},\quad \mbox{where}\quad \langle x \rangle := (1 + |x|^2)^{\frac{1}{2}}.
\end{equation}
 Our primary goal in this work is to develop a monotonicity formula specifically adapted to the fZK equation \eqref{E:fZK} and to apply this formula to investigate the instability of solitary wave solutions.
 
Real-valued solutions of \eqref{E:HBO} formally conserve the $L^2$-norm (\emph{mass}),
\begin{equation}\label{E:mass}
	M[u(t)] := \int_{\mathbb{R}^2} |u(x,t)|^2 \, \mathrm{d}x = M[u(0)],
\end{equation}
the \emph{energy},
\begin{equation}\label{E:energy}
	E[u(t)] := \frac{1}{2} \int_{\mathbb{R}^2} \big|\, |\nabla_x|^{\frac{\alpha}{2}} u(x,t)\big|^2 \, \mathrm{d}x - \frac{1}{6} \int_{\mathbb{R}^2} u^3(x,t) \, \mathrm{d}x = E[u(0)],
\end{equation}
and 
\begin{equation}\label{E:l1inte-g}
	\int_{\mathbb{R}^2} u(x,t) \, \mathrm{d}x = \int_{\mathbb{R}^2} u(x,0) \, \mathrm{d}x.
\end{equation}
To the best of our knowledge, no further conserved quantities are known for \eqref{E:HBO}.

The conservation of the $L^{2}$-mass in \eqref{E:mass} is of particular interest. Although it is preserved in time, its spatial distribution is not well understood; in particular, it is unknown whether there exist regions in space where the $L^{2}$-mass concentrates or tends to decay. This question is central when attempting to describe the long-time behavior of solutions.

In various applications, energy estimates for equation \eqref{E:fZK} involving weighted functions that mimic the decay of solitary waves play a fundamental role. Let us provide a detailed discussion to give a precise description of the problem at hand and to identify natural obstructions arising from the operator in \eqref{E:fZK}. Specifically, we consider the localized functional
\begin{equation}\label{funct}
	\mathcal{I}(t) = \int_{\mathbb{R}^n} u^{2}(x,t) \, \varphi(x,t) \, \mathrm{d}x,
\end{equation}
where the weight function $\varphi$ is smooth and designed to reflect the decay properties of the ground state solution $Q$ described in \eqref{travequ}--\eqref{Qdecay} (see \eqref{p2} below for its precise definition). Estimates for $\mathcal{I}(t)$ have led to the development of monotonicity formulas, which play a central role in addressing questions of stability and instability of solitary waves; see, for instance, \cite{KenigMartel2009, kenig2011local, MartelMerle2000, CoteMunoPilodSimps2016, FHRY2023, FarahHolmerRoudenko2019}.
	
The first obstacle we face when differentiating the functional \eqref{funct} is handling the term
\begin{equation}\label{problem}
\int_{\mathbb{R}^{n}} u\,\partial_{x_{1}}|\nabla_{x}|^{\alpha}u\,\varphi\,\mathrm{d}x.
\end{equation}
At this point, a significant analytical challenge in our setting arises from the nonlocal nature of the operator $|\nabla_x|^{\alpha}$, with $\alpha \in [1,2)$. Unlike the classical Laplacian, which permits standard integration by parts and provides local control in Sobolev spaces, the fractional Laplacian lacks a pointwise differential structure. Thus, many standard tools do not directly apply in this setting. In particular, this makes it especially challenging to control nonlinear interactions, which typically rely on localized energy techniques.
	
To overcome these difficulties, we adopt the strategy of rewriting \eqref{problem} as the commutator
\begin{equation*}
\int_{\mathbb{R}^{n}} u\,\partial_{x_{1}}|\nabla_{x}|^{\alpha}u\,\varphi\,\mathrm{d}x = -\frac{1}{2} \int_{\mathbb{R}^{n}} u\left[\partial_{x_{1}}|\nabla_{x}|^{\alpha};\,\varphi\right]u\,\mathrm{d}x.
\end{equation*}
This somehow natural approach has been used in various contexts, e.g., in the one-dimensional setting in \cite{kenig2011local}. Indeed, we extend the ideas developed in \cite{kenig2011local} to higher-dimensional settings.
	
In this context, we establish a monotonicity formula specifically tailored to the equation under consideration. This identity serves as a central analytical tool in the implementation of classical energy estimates and represents one of the main contributions of the present work. 
\begin{lem}\label{takamajaka}
Let $\alpha \in [1,2)$, $\gamma \in \left(\frac{1}{2}, \frac{\alpha + 1}{2}\right]$, and $\omega \in \mathbb{R}$. Consider 
$\sigma = (\sigma_1, \dots, \sigma_n) \in \mathbb{R}^n$ satisfying the condition
\begin{equation}\label{sigmacond}
\sigma_1 > \sqrt{\frac{\alpha}{2(1+\alpha)}} \, \sqrt{\sigma_2^2 + \cdots + \sigma_n^2}.
\end{equation}
Define the functions
\begin{equation}\label{p2}
\varphi(x) := \int_{-\infty}^{\sigma \cdot x} \frac{\mathrm{d}s}{\langle s + \omega \rangle^{2\gamma}},
\end{equation}
and
\begin{equation}\label{p1}
\phi(x) := \frac{1}{\langle \sigma \cdot x + \omega \rangle^{\gamma}},
\end{equation}
where $x = (x_1, \dots, x_n) \in \mathbb{R}^n.$ Then, there exist positive constants $c_1$, $c_2$, depending only on $\alpha$, $n$, $\sigma$, and $\gamma$, such that for every $u \in \mathcal{S}(\mathbb{R}^n)$ the following inequality holds:
\begin{equation}\label{main1}
\int_{\mathbb{R}^n} u\, \varphi \, \partial_{x_1} |\nabla_x|^\alpha u \, \mathrm{d}x
\leq - c_1 \int_{\mathbb{R}^n} \big| |\nabla_x|^{\frac{\alpha}{2}} (u \phi) \big|^2 \, \mathrm{d}x + c_2 \int_{\mathbb{R}^n} u^2 \phi^2 \, \mathrm{d}x.
\end{equation}
In particular,
\begin{itemize}
\item[(i)] If $n=1$, then $c_1 = \frac{(\alpha+1) }{2}\sigma_1$.
\item[(ii)] For $n \geq 2$, if $\sigma_j = 0$ for all $2 \leq j \leq n$, one can take $c_1 = \frac{\sigma_1}{2}$.
\end{itemize}
\end{lem}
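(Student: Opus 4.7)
The strategy is to (a) reduce the left-hand side of \eqref{main1} to a commutator, (b) extract its positive principal part via the pseudo-differential calculus from the authors' previous work, and (c) absorb the lower-order remainder into $c_2\int u^2\phi^2\,\mathrm{d}x$.

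First, since $\partial_{x_1}|\nabla_x|^\alpha$ is formally skew-adjoint and $\varphi$ acts by multiplication, an integration by parts gives
\begin{equation*}
\int_{\mathbb{R}^n} u\,\varphi\,\partial_{x_1}|\nabla_x|^\alpha u\,\mathrm{d}x=-\tfrac{1}{2}\int_{\mathbb{R}^n} u\,\bigl[\partial_{x_1}|\nabla_x|^\alpha,\varphi\bigr]u\,\mathrm{d}x,
\end{equation*}
so the problem reduces to a lower bound for the self-adjoint commutator quadratic form. Using the identity $\nabla_x\varphi=\sigma\phi^2$ (immediate from \eqref{p2}--\eqref{p1}), the Poisson bracket of the symbol $a(\xi)=i\xi_1|\xi|^\alpha$ against $\varphi$ is
\begin{equation*}
\{a,\varphi\}=\nabla_\xi a\cdot\nabla_x\varphi=i\,\phi^2\bigl[\sigma_1|\xi|^\alpha+\alpha\,\xi_1(\sigma\cdot\xi)|\xi|^{\alpha-2}\bigr],
\end{equation*}
so the principal symbol of $[\partial_{x_1}|\nabla_x|^\alpha,\varphi]$ is $\phi(x)^2\,q(\xi)$ with $q(\xi):=\sigma_1|\xi|^\alpha+\alpha\,\xi_1(\sigma\cdot\xi)|\xi|^{\alpha-2}$. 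The symbolic calculus of the authors' earlier paper then yields a decomposition $[\partial_{x_1}|\nabla_x|^\alpha,\varphi]=m_{\phi^2}T+R_0$, where $T$ is the Fourier multiplier by $q$ and $R_0$ is strictly lower order.

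Pointwise positivity of $q$ reduces to the trigonometric identity $\inf_{\xi\neq 0}\xi_1(\sigma\cdot\xi)/|\xi|^2=\tfrac{1}{2}\bigl(\sigma_1-\sqrt{\sigma_1^2+\sigma_2^2+\cdots+\sigma_n^2}\bigr)$; hypothesis \eqref{sigmacond} is tailored to upgrade this into the operator inequality $T\geq 2c_1|\nabla_x|^\alpha$ for an explicit $c_1=c_1(\alpha,\sigma,n)>0$. Cases (i) and (ii) follow by direct substitution: for $n=1$, the multiplier $q$ collapses to $(\alpha+1)\sigma_1|\xi|^\alpha$, giving $c_1=(\alpha+1)\sigma_1/2$; if $\sigma_j=0$ for every $j\geq 2$ the cross term $\xi_1(\sigma\cdot\xi)=\sigma_1\xi_1^2$ is nonnegative, so retaining only the $\sigma_1|\xi|^\alpha$ part of $q$ yields $c_1=\sigma_1/2$. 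Writing $\int u\phi^2 Tu\,\mathrm{d}x=\int(\phi u)T(\phi u)\,\mathrm{d}x-\int(\phi u)[T,\phi]u\,\mathrm{d}x$ and applying Plancherel gives $\int(\phi u)T(\phi u)\,\mathrm{d}x\geq 2c_1\int\bigl||\nabla_x|^{\alpha/2}(u\phi)\bigr|^2\,\mathrm{d}x$, which after the factor $-\tfrac{1}{2}$ from step (a) produces the negative main term of \eqref{main1}.

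The main obstacle, as I see it, is step (c): controlling $R_0$ and the commutator $[T,\phi]$ by $C\int u^2\phi^2\,\mathrm{d}x$. The weight $\phi=\langle\sigma\cdot x+\omega\rangle^{-\gamma}$ is neither bounded below nor a standard H\"ormander symbol, so classical Calder\'on-type commutator estimates are unavailable; this is precisely where the bespoke pseudo-differential calculus from the authors' earlier paper becomes indispensable, since its composition and commutator rules are tailored to weights built from powers of $\langle\sigma\cdot x+\omega\rangle$ and should deliver bounds of the schematic form $|\langle R_0 u,u\rangle|+|\langle[T,\phi]u,\phi u\rangle|\lesssim\int u^2\phi^2\,\mathrm{d}x$, closing the estimate.
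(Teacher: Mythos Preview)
Your outline is essentially correct at the level of the principal symbol: the commutator reduction, the identification of the principal part $\phi^2(x)\,q(\xi)$ with $q(\xi)=\sigma_1|\xi|^\alpha+\alpha\xi_1(\sigma\cdot\xi)|\xi|^{\alpha-2}$, and the positivity analysis (your eigenvalue computation is exactly the content of the matrix $M$ that the paper assembles at the end of its proof) all match the paper's route. Cases (i) and (ii) are also handled correctly.

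The gap is in step (c), but not for the reason you flag. You focus on the weight $\phi$ not being a standard H\"ormander symbol; that is indeed the rationale for the bespoke class $S^{m,q}_{\sigma,\omega}$. The problem you overlook is on the \emph{frequency} side: the symbol $a(\xi)=i\xi_1|\xi|^\alpha$ is not $C^\infty$ at $\xi=0$ for $\alpha\in[1,2)$, so $a\notin S^{\alpha+1,0}_{\sigma,\omega}$, and Proposition~\ref{PO1} cannot be applied directly to $[\partial_{x_1}|\nabla_x|^\alpha,\varphi]$. The same obstruction hits your commutator $[T,\phi]$, since $q$ also contains $|\xi|^{\alpha-2}$. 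So the sentence ``the symbolic calculus \dots then yields a decomposition $[\partial_{x_1}|\nabla_x|^\alpha,\varphi]=m_{\phi^2}T+R_0$'' is not justified as stated.

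The paper's fix is a high/low frequency split $\partial_{x_1}|\nabla_x|^\alpha=p_1(D)+p_2(D)$ via a cutoff $\chi\in C_0^\infty$. For $p_2(D)$, the symbol $i\xi_1|\xi|^\alpha(1-\chi(\xi))$ \emph{is} smooth, so the pseudo-differential calculus applies exactly along the lines you sketch (including the secondary commutators $[|\nabla_x|^\alpha(1-\chi(D)),\phi]$ and $[|\nabla_x|^{\alpha-2}(1-\chi(D))D_1D_j,\phi]$). For the low-frequency piece $p_1(D)$, the calculus is abandoned entirely: one writes $[p_1(D),\varphi]u$ as an integral operator with kernel $\Omega(x-y)(\varphi(y)-\varphi(x))$, uses the pointwise decay $|\Omega(x)|\lesssim\langle x\rangle^{-(n+\alpha+1)}$ of Proposition~\ref{kernel_decay}, decomposes $\varphi(y)-\varphi(x)=\phi(x)\phi(y)\,\sigma\cdot(y-x)+R(x,y)$, and bounds the $R$-contribution by Schur's test (this is where the restriction $\gamma\le\frac{\alpha+1}{2}$ enters). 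The leading kernel piece $\phi(x)\phi(y)\,\sigma\cdot(y-x)\,\Omega(x-y)$ recombines with the high-frequency principal part to produce the full multiplier $q$ acting on $u\phi$. Without this split, neither your $R_0$ nor your $[T,\phi]$ is controlled by the available tools.
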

Setting $n=1$, the above result agrees with those in \cite[Lemmas 6]{kenig2011local}, \cite[Lemma 3]{KenigMartel2009}. Thus, Lemma \ref{takamajaka} can be regarded as a higher-dimensional version of these results. For similar estimates for the fractional Laplacian in one dimension, we refer to \cite{EychenneValet2023II,EychenneValet2023}. 

Lemma \ref{takamajaka} is useful for analyzing the decay properties of solutions of the fZK in certain regions controlled by the soliton type function $\varphi(x)$. In particular, we believe that the results described in Lemma \ref{takamajaka} can be applied to studying the large time behavior of solutions, such as the stability and instability of solitary wave solutions of \eqref{E:fZK} in dimensions $n\geq 2$ or related models. As an application, we consider the case $n=2$, $\alpha=1$ in \eqref{E:fZK}, which corresponds to Shrira's equation (see equation \eqref{E:HBO} below), a $L^2$-critical model with quadratic nonlinearity.

We remark that condition \eqref{sigmacond} seems to be related to fast decay regions of solutions of fZK, see the numerical studies in \cite{RianoRoudenkoYang2022}. The identity \eqref{main1} suggests that decay of the $L^2$-mass of solutions to fZK should be concentrated primarily in the region $\left\{\sigma \cdot x - c t \geq \beta\right\}$, where $\sigma\in\mathbb{R}^n$ satisfies the geometric conditions in \eqref{sigmacond} for some $\beta \in \mathbb{R}$ and $c >0$. A similar phenomenon of concentration occurs for the ZK equation ($\alpha=2$ in \eqref{E:fZK}), we refer to \cite{FHRY2023,mendez2025decay,CoteMunoPilodSimps2016,MMPP2021}. 
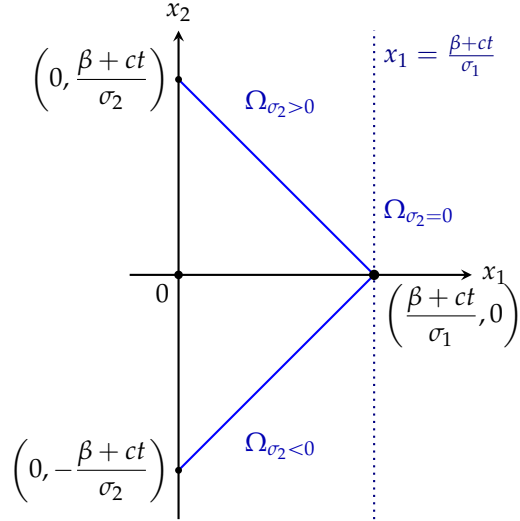
\begin{figure}[htbp]
\centering
    \begin{tikzpicture}[scale=0.65, >=stealth]
\coordinate (O) at (0,0);
\coordinate (P1) at (0,4);
\coordinate (P2) at (0,-4);
\coordinate (Q)  at (4,0);
\draw[->, thick, black] (-1,0) -- (6,0) node[right] {$x_1$};
\draw[->, thick, black] (0,-5) -- (0,5) node[above] {$x_2$};
\draw[thick, color=blue] (P1) -- (Q);
\draw[thick, color=blue] (P2) -- (Q);
\draw[thick, dotted, color=blue!50!black] (4,-5) -- (4,5);
\node[color=blue!50!black, right] at (4,4.5) {$x_1 = \frac{\beta + ct}{\sigma_1}$};
\fill[black] (P1) circle (2pt) node[left] {$\displaystyle \left(0, \frac{\beta + ct}{\sigma_2}\right)$};
\fill[black] (P2) circle (2pt) node[left] {$\displaystyle \left(0, -\frac{\beta + ct}{\sigma_2}\right)$};
\fill[black] (Q) circle (3pt) node[below right] {$\displaystyle \left(\frac{\beta + ct}{\sigma_1}, 0\right)$};
\fill[black] (O) circle (2.5pt);
\node[below left] at (O) {$0$};
\node[blue!70!black, left] at (3,3.5) {$\Omega_{\sigma_{2}>0}$ };
\node[blue!70!black, left] at (3,-3.5) {$\Omega_{\sigma_{2}<0}$ };
\node[blue!70!black, above right] at (4,0.8) {$\Omega_{\sigma_{2}=0}$ };
\end{tikzpicture}
\caption{Diagram representing the regions of space associated with the $L^2$-mass behavior of solutions of \eqref{E:fZK} in $\Omega_1 = \left\{ x = (x_1, x_2) \in \mathbb{R}^2 : \sigma \cdot x - ct \geq \beta \right\}$. Depending on the sign of $\sigma_2$, we consider three different sub-regions of $\Omega_1$: $\Omega_{\sigma_2 < 0} = \left\{ x \in \mathbb{R}^2 : \sigma \cdot x - ct > \beta \right\},$ $\Omega_{\sigma_2 = 0} = \left\{ x \in \mathbb{R}^2 : \sigma \cdot x - ct = \beta \right\},$ $\Omega_{\sigma_2 > 0} = \left\{ x \in \mathbb{R}^2 : \sigma \cdot x - ct > \beta \right\}.$ Each of these regions captures distinct dynamics of the $L^2$-mass of solutions.}
\label{fig:L2mass}
\end{figure}			
In the particular case where the weight function takes the form $\varphi(\sigma \cdot x-ct)$ as in \eqref{p2}, with $\sigma$ satisfying condition \eqref{sigmacond} and parameters $\beta, c > 0$, we can extract meaningful information about the decay of the $L^2$-mass of solutions in different regions of space. For example, when $n = 2$, in Figure~\ref{fig:L2mass} we observe that the $L^2$-mass of the solution displays different behavior. This is a key observation to obtain our instability result. 


\subsection{Applications to instability for the Shrira equation}
Using the identity $\mathcal{R}_1 = -\partial_{x_1} |\nabla_x|^{-1}$, where $\mathcal{R}_1$ denotes the \emph{Riesz transform} in the first coordinate, we observe that the equation \eqref{E:fZK} with $n=2$, $\alpha=1$ agrees with the \emph{Shrira equation}
\begin{equation}\label{E:HBO}
\partial_t u- \mathcal{R}_1 \Delta u+  \frac{1}{2}\partial_{x_1}(u^2)=0, \qquad (x_1,x_2)\in \mathbb{R}^2, \, \, t\in \mathbb{R}.
\end{equation}
The model \eqref{E:HBO} was introduced by Shrira in \cite{S1989} to describe two-dimensional long-wave perturbations in a boundary-layer type shear flow. From a mathematical point of view, based on the fact that in one dimension the Fourier multiplier associated with the Riesz transform coincides with that of the Hilbert transform operator, \eqref{E:HBO} (or \eqref{E:fZK} with $n=2$, $\alpha=1$) can be regarded as a two-dimensional extension of the \emph{Benjamin-Ono} equation (BO)
\begin{equation}\label{E:BO}
\partial_t u- \mathcal{H}\partial_{x_1}^2 u+ \frac{1}{2}\partial_{x_1}(u^2)=0, \qquad x, t\in\mathbb{R},
\end{equation}
where $\mathcal{H}$ stands for  the \emph{Hilbert transform}, which is defined as $\widehat{\mathcal{H}f}(\xi)=-i\sign(\xi)\widehat{f}(\xi)$.  

Building on the monotonicity formula in Lemma~\ref{takamajaka}, and incorporating ideas from \cite{MartelMerle2001,FarahHolmerRoudenko2019}, we show that solitary waves of \eqref{E:HBO} are orbitally unstable. We begin by introducing additional notation and preliminaries.

Focusing on the Shrira equation, corresponding to the case $\alpha=1$ and $n=2$ in \eqref{EQ:groundState}, the solitary wave solutions to \eqref{E:HBO} take the form $Q_c(x_1,x_2) = c\, Q(c x_1, c x_2)$, with $c > 0$, where $Q$ satisfies
 the decay conditions \eqref{poldecayGS} and \eqref{Qdecay} (with $\alpha=1$, $n=2$).

We note thate quation \eqref{E:HBO} is invariant under the $L^2$-critical scaling
\begin{equation}\label{scaling1}
	u_\lambda(x,t) := \lambda\, u(\lambda x, \lambda^2 t), \quad \lambda > 0,
\end{equation}
for which
\begin{equation*}
	\|u_\lambda(\cdot,t)\|_{L^2} = \|u(\cdot,\lambda^2 t)\|_{L^2}.
\end{equation*}
Concerning well-posedness in $H^{s}-$spaces, we recall that the results in \cite[Theorem 4.1 and Corollary A.1]{HickmanLinRia2019} establish that the initial value problem associated to \eqref{E:HBO} can not be solved by a Picard iterative method implemented on its integral formulation for any initial data in the Sobolev space $H^s(\mathbb{R}^2)$, $s\in \mathbb{R}$. Thus, using compactness methods, in \cite{Schippa2020}, it was proved that \eqref{E:HBO} is locally well-posed in $H^s(\mathbb{R}^2)$ whenever $s>3/2$. As far as we know, there are no results concerning local well-posedness for lower regularities $s\leq \frac{3}{2}$, or global in-time solutions. We refer to \cite{CunhaRiano2025,Riano2020} for results concerning well-posedness in weighted Sobolev spaces. 

Next, we consider the definitions of orbital stability and instability. For fixed $c > 0$ and $\omega > 0$, it is defined a \emph{neighborhood (or tube) of radius $\omega$ around $Q_c$} modulo translations by
\begin{equation}\label{tubdef}
	U_{\omega} = \left\{ u \in H^{\frac{1}{2}}(\mathbb{R}^2) : \inf_{z \in \mathbb{R}^2} \| u(\cdot) - Q_c(\cdot + z) \|_{H^{\frac{1}{2}}} \leq \omega \right\}.
\end{equation}
We can now state the definitions of orbital stability and instability.
\begin{defn}\label{orbStabdef}
	Let $c > 0$. We say that $Q_c$ is \emph{orbitally stable} in $H^{\frac{1}{2}}(\mathbb{R}^2)$ if for every $\omega > 0$ there exists $\delta > 0$ such that whenever the initial data satisfies $u_0 \in U_{\delta}$, the solution $u(t)$ to \eqref{E:HBO} with initial condition $u(\cdot,0) = u_0$ remains in $U_{\omega}$ for all time $t \in \mathbb{R}$. In other words,
	\begin{equation*}
		\sup_{t \in \mathbb{R}} \inf_{z \in \mathbb{R}^2} \| u(\cdot, t) - Q_c(\cdot + z) \|_{H^{\frac{1}{2}}} < \omega.
	\end{equation*}
\end{defn}
\begin{defn}\label{orbInstabdef}
	Let $c > 0$. We say that $Q_c$ is \emph{unstable} if there exists some $\omega > 0$ such that for every $\delta > 0$, if $u_0 \in U_{\delta}$, then the corresponding solution $u(t)$ to \eqref{E:HBO} satisfies
	\begin{equation*}
	\exists t_0 = t_0(u_0) > 0 \quad \text{such that} \quad u(t_0) \notin U_{\omega}.
	\end{equation*}
\end{defn}
As mentioned earlier, the Cauchy problem for \eqref{E:HBO} is known to be locally well-posed in $H^s(\mathbb{R}^2)$ for $s > \frac{3}{2}$. \emph{Thus, to work in the energy space $H^{\frac{1}{2}}(\mathbb{R}^2)$, we need to assume existence, uniqueness, continuous dependence, and conservation laws hold in that space:}
\begin{assumption}\label{Assm1}
	Given initial data $u_0 \in H^{\frac{1}{2}}(\mathbb{R}^2)$, there exists a unique solution $u \in C([0,T]; H^{\frac{1}{2}}(\mathbb{R}^2))\cap \dots$ to \eqref{E:HBO}. Moreover, this solution can be approximated in the $C([0,T]; H^{\frac{1}{2}}(\mathbb{R}^2))$ topology by smooth solutions $u^{(n)} \in C([0,T]; H^\infty(\mathbb{R}^2))$. Additionally, the mass and energy are conserved, i.e.,
	\begin{equation*}
	M[u(t)] = M[u_0] \quad \text{and} \quad E[u(t)] = E[u_0].
	\end{equation*}
\end{assumption}

Our second main result establishes the orbital instability of solitary wave solutions to \eqref{E:HBO}:

\begin{thm}[$H^{\frac{1}{2}}$-conditional instability of $Q$]\label{MainInsta} Let $\psi_0$ denote the positive, radially symmetric eigenfunction corresponding to the unique negative eigenvalue of the linearized operator $L$ defined in \eqref{linearizedop} (see Theorem \ref{propL} below). Consider Assumption \ref{Assm1}, there exists $\omega_0 > 0$ such that for any $\delta > 0$, one can find initial data $u_0 \in H^{\frac{1}{2}}(\mathbb{R}^2)$ satisfying
	\begin{equation}\label{condthm}
		\| u_0 - Q \|_{H^{\frac{1}{2}}} \leq \delta \quad \text{and} \quad u_0 - Q \perp \{ \partial_{x_1} Q, \partial_{x_2} Q, \psi_0 \},
	\end{equation}
	and a finite time $t_0 = t_0(u_0)$ such that the corresponding solution $u(t)$ of \eqref{E:HBO} with initial data $u_0$ satisfies
	\begin{equation*}
	u(t_0) \notin U_{\omega_0},
	\end{equation*}
	or equivalently,
	\begin{equation*}
		\inf_{z \in \mathbb{R}^2} \| u(\cdot, t_0) - Q(\cdot - z) \|_{H^{\frac{1}{2}}} \geq \omega_0.
	\end{equation*}
\end{thm}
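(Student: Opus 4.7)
The plan is to argue by contradiction along the lines of the modulation--virial scheme of Martel--Merle \cite{MartelMerle2001} for the $L^2$-critical Benjamin--Ono equation, now employing Lemma~\ref{takamajaka} as the key tool for taming the nonlocal dispersion in two dimensions. Suppose, for some $\omega_0 > 0$ to be fixed small and every $\delta > 0$, one could find $u_0$ as in \eqref{condthm} whose solution $u(t)$ stayed inside $U_{\omega_0}$ for every $t \ge 0$. A standard application of the implicit function theorem then produces the decomposition
\begin{equation*}
u(t, x) = Q(x - z(t)) + \epsilon(t, x), \qquad t \geq 0,
\end{equation*}
with $z \in C^1([0,\infty); \mathbb{R}^2)$, $\epsilon(t) \perp \partial_{x_j}Q(\cdot - z(t))$ for $j=1,2$, together with the bounds $\|\epsilon(t)\|_{H^{1/2}} \lesssim \omega_0$ and $|\dot z(t) - e_1| \lesssim \omega_0$, where $e_1 = (1,0)$ is the velocity of the unit-speed soliton $Q$.

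For the coercivity step, let $L := \mathrm{Id} + |\nabla_x| - Q$ be the linearization around $Q$. By Theorem~\ref{propL}, $L$ admits a unique simple negative eigenvalue $-\lambda_0$ with positive radial eigenfunction $\psi_0$, kernel $\mathrm{span}\{\partial_{x_1}Q, \partial_{x_2}Q\}$, and is otherwise positive. Combined with the $L^2$-critical identity $E[Q] = 0$ (a Pohozaev identity obtained by testing \eqref{EQ:groundState} against $Q + x\cdot\nabla Q$) and an expansion of the conservation laws around $Q$, this gives
\begin{equation*}
\langle L\epsilon(t), \epsilon(t)\rangle = 2E[u_0] + (M[u_0] - M[Q]) - \|\epsilon(t)\|_{L^2}^2 + O(\|\epsilon(t)\|_{H^{1/2}}^3).
\end{equation*}
The orthogonality $\epsilon(0)\perp\psi_0$ is not preserved by the flow, but its continuity in time together with $\|\epsilon(t)\|_{H^{1/2}} \lesssim \omega_0$ keeps $|\langle\epsilon(t),\psi_0\rangle|$ small, while the missing $\langle\epsilon, Q\rangle$ orthogonality is absorbed via mass conservation. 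A Weinstein-type coercivity then yields the uniform trapping $\|\epsilon(t)\|_{H^{1/2}} = O(\delta + \omega_0^{3/2})$ for all $t \ge 0$, so $u(t)$ remains genuinely close to $Q(\cdot - z(t))$ throughout.

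The contradiction is then extracted via a virial-monotonicity argument. Pick $\sigma \in \mathbb{R}^2$ satisfying \eqref{sigmacond} (for instance $\sigma = e_1$, which is trivially admissible), $\gamma \in (1/2, 1]$, a frame speed $c_* \in (0, \sigma\cdot e_1)$, and a large negative shift $\omega$; set
\begin{equation*}
J(t) := \int_{\mathbb{R}^2} u^2(t, x)\,\varphi\bigl(\sigma\cdot x - c_* t + \omega\bigr)\,\mathrm{d}x.
\end{equation*}
Differentiating in $t$, using \eqref{E:HBO}, and applying Lemma~\ref{takamajaka} to bound $\int u\varphi\,\partial_{x_1}|\nabla_x|u\,\mathrm{d}x$ from above, one obtains
\begin{equation*}
J'(t) \leq -2c_1 \int_{\mathbb{R}^2}\bigl||\nabla_x|^{1/2}(u\phi_t)\bigr|^2\mathrm{d}x + (2c_2 - c_*)\int_{\mathbb{R}^2} u^2\phi_t^2\,\mathrm{d}x + \tfrac{2\sigma_1}{3}\int_{\mathbb{R}^2} u^3\phi_t^2\,\mathrm{d}x,
\end{equation*}
where $\phi_t(x) := \langle\sigma\cdot x - c_* t + \omega\rangle^{-\gamma}$. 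Choosing $\gamma$ close enough to $1/2$ to keep $c_2$ small, then $c_* > 2c_2$, and bounding the cubic error through the Sobolev embedding $H^{1/2}(\mathbb{R}^2)\hookrightarrow L^4(\mathbb{R}^2)$ applied to the localized $u\phi_t$, one deduces $J'(t) \leq 0$. On the other hand, since the soliton travels at velocity $e_1$ and $c_* < \sigma\cdot e_1 = 1$, it eventually enters the region where $\varphi$ approaches its limit at $+\infty$, and the trapping of $\epsilon$ forces $J(t) \to M[Q]\cdot\varphi(+\infty) > 0$ as $t\to\infty$. Taking $|\omega|$ sufficiently large makes $J(0) \ll M[Q]\cdot\varphi(+\infty)$, contradicting $J(t)\le J(0)$ and thereby producing a finite $t_0$ with $u(t_0) \notin U_{\omega_0}$.

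The main obstacle lies at the intersection of the spectral and analytical difficulties of the $L^2$-critical regime. Spectrally, the scaling direction $\Lambda Q := Q + x\cdot\nabla Q$ is a degenerate mode of $L$ in dimension two (one computes $\langle L\Lambda Q, \Lambda Q\rangle = 0$), so the Weinstein coercivity must be closed modulo the $Q$-direction (via mass conservation) and the $\psi_0$-direction (via propagated orthogonality). Analytically, converting the virial inequality into a useful monotonicity requires delicate tuning of $c_*, \gamma, \sigma, \omega$ so that the positive weighted-$L^2$ and cubic error terms are dominated by the coercive dispersive contribution supplied by Lemma~\ref{takamajaka}; without that lemma no comparable control on the nonlocal dispersive term would be available in two dimensions, and the scheme would not close.
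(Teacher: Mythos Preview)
Your argument contains a genuine gap at the virial step. The functional $J(t)=\int u^2\varphi(\sigma\cdot x-c_*t+\omega)\,\mathrm{d}x$ is \emph{not} monotone decreasing in the $L^2$-critical regime when $c_*<1$. Substituting $u\approx Q(\cdot-z(t))$ and treating $\phi_t$ as locally constant near the soliton, the leading contribution to your displayed upper bound for $J'(t)$ is
\[
\phi_t(z(t))^2\Big[-2c_1\int\big||\nabla_x|^{1/2}Q\big|^2+(2c_2-c_*)\int Q^2+\tfrac{2}{3}\int Q^3\Big].
\]
Using the critical Pohozaev identities $\int||\nabla_x|^{1/2}Q|^2=\tfrac13\int Q^3$ and $\int Q^2=\tfrac16\int Q^3$, together with $c_1=\tfrac12$ (Lemma~\ref{takamajaka}\,(ii)), this equals $\tfrac{1}{6}(2+2c_2-c_*)\int Q^3\,\phi_t(z(t))^2$, which is strictly positive for any $c_*<1$. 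Thus $J'\le 0$ cannot hold, and your contradiction collapses. (Incidentally, $c_2$ is made small by the dilation parameter $M$ in Corollary~\ref{Maincorollary}, not by sending $\gamma\to\tfrac12$.)

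This obstruction is precisely what distinguishes the $L^2$-critical case. The paper does not extract the contradiction from localized mass at all; instead it follows Martel--Merle and uses the \emph{linear} virial $J_A(s)=\int \epsilon\,F\,\chi_A$ with $F(x_1,x_2)=\int_{-\infty}^{x_1}\Lambda Q$ (Section~\ref{VirialEstSec}). Differentiating and using $L(\Lambda Q)=-Q$ produces the term $\int\epsilon\,Q\approx\tfrac12\mathcal M_0>0$, which, after packaging into $K_A(s)=\lambda(s)(J_A(s)-\kappa)$, gives the strict lower bound $\tfrac{d}{ds}K_A\ge c/n>0$ (Theorem~\ref{theolowerbound}). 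The monotonicity formula of Lemma~\ref{takamajaka} enters only indirectly, to prove the decay estimates of Corollary~\ref{propdecayIMPR} that control the remainder $R(\epsilon,A)$ in Lemma~\ref{eqJA}. Your proposal also omits the scaling parameter $\lambda(t)$ in the modulation decomposition, which is indispensable here: the $\psi_0$-orthogonality is enforced through $\lambda$, and the cancellation $\tfrac{1}{\lambda}\tfrac{d\lambda}{ds}\cdot\kappa$ is what makes $K_A$ the right functional.
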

Theorem \ref{MainInsta} demonstrates that solitary waves are unstable under Assumption \ref{Assm1}.  
This is a significant result, despite being a conditional statement, because it sets techniques for two-dimensional dispersive equations with fractional dispersion, that are $L^2$-critical, and involve quadratic nonlinearity.

The proof of Theorem \ref{MainInsta} builds upon the work of Martel and Merle \cite{MartelMerle2001} on the critical generalized KdV equation, as well as the instability results for the 2D Zakharov-Kuznetsov equation by Farah, Holmer, and Roudenko \cite{FarahHolmerRoudenko2019}. Unlike these prior works, which rely heavily on pointwise decay formulas, our approach employs Lemma \ref{takamajaka} to derive monotonicity formulas (see Corollary \ref{propdecayIMPR} below), providing an alternative and potentially simpler method to prove instability. The novelty lies in the combination of two monotonicity formulas, where the first provides a base decay, while the second refines this estimate. More precisely, considering the decomposition $\epsilon$ of $u$ around the ground state $Q$ (see \eqref{eqlineareq1} below), we establish in Corollary \ref{propdecayIMPR} that for $1<m<\frac{3}{2}$,
\begin{equation}\label{DecayEpsilonIntr}
\int_{\mathbb{R}}\int_{0}^{\infty} \langle x_1\rangle^{m}\epsilon^2(x_1,x_2,s)\, \mathrm{d}x_1  \mathrm{d}x_2 \lesssim \kappa_{\epsilon}    
\end{equation}
for some constant $\kappa_{\epsilon}>0$ that is small in proportion to the proximity of the initial data $u_0$ to $Q$. This yields a simpler control than the pointwise decay estimates used in previous works.  We believe that this strategy may be adapted to other dispersive equations with fractional dispersion in different spatial variables, not only for instability but also for different studies on the large time behavior of solutions, further highlighting the potential use of our approach. We remark that from $Q(x)\sim \frac{1}{1+|x|^3}$, one may conjecture that \eqref{DecayEpsilonIntr} could be extended to the range $0<m<4$, but our techniques do not bring a conclusion on this note.

We conclude with some remarks and consequences related to Theorem \ref{MainInsta}:
\begin{itemize}
\item The initial data $u_0$ used in Theorem \ref{MainInsta} is constructed following \cite{FarahHolmerRoudenko2019,FarahHolmerRoudenkoSJM2019}. For simplicity, set $c = 1$. Define
\begin{equation*}
a = \frac{\int_{\mathbb{R}^2} \psi_0 Q \, \mathrm{d}x}{\|\psi_0\|_{L^2}^2}, \quad \delta > 0, \quad \text{and} \quad n_0 = \left(1 + \frac{\|\psi_0\|_{H^{\frac{1}{2}}}}{\|\psi_0\|_{L^2}}\right) \frac{\|Q\|_{H^{\frac{1}{2}}}}{\delta}.
\end{equation*}
Then, for any $n \in \mathbb{Z}^{+}$ be such that $n\geq n_0$ set
\begin{equation*}
\epsilon_0 = \frac{1}{n}(Q + a \psi_0),
\end{equation*}
and define $u_0 := Q + \epsilon_0$, which is an initial condition for Theorem \ref{MainInsta}. Since $u_0 \in H^\infty(\mathbb{R}^2)$, the current local well-posedness theory guarantees the existence of $T = T(\|u_0\|_{H^{\frac{3}{2}^+}}) > 0$ and a unique solution
\begin{equation*}
u \in C([0,T); H^\infty(\mathbb{R}^2)) \subset C([0,T); H^{\frac{1}{2}}(\mathbb{R}^2))
\end{equation*}
of \eqref{E:HBO} with $u(0) = u_0$, preserving the mass and energy, i.e., $M[u(t)] = M[u_0]$ and $E[u(t)] = E[u_0]$. Thus $u_0$ satisfies Assumption \ref{Assm1}. Hence, even though local well-posedness is not established directly in the energy space $H^{\frac{1}{2}}(\mathbb{R}^2)$, there are local solutions for which \eqref{condthm} holds for some initial data arbitrarily close to $Q$. 
\item The instability result aligns with numerical observations from \cite{RianoRoudenkoYang2022}, which conjecture finite or infinite time blow-up.
\end{itemize}


\subsection{Organization of the document} In Section \ref{SectPrelimNot}, we introduce the notation and preliminary results, including pseudo-differential calculus results and some properties of the linearized operator $L$. Section \ref{SectionMainlemma} concerns the proof of Lemma \ref{takamajaka}, which provides monotonicity results for the fractional Laplacian on arbitrary dimensions. Finally, in Section \ref{SectInstaResults}, we deduce our main result on instability for the two-dimensional Shrira equation, that is, Theorem \ref{MainInsta}. In this section, we also derive key monotonicity formulas involving the polynomial decay of solutions. 


\section{Notation and preliminaries}\label{SectPrelimNot}

We begin by fixing the notation used throughout the paper.
\begin{itemize}
    \item Given $a, b \in \mathbb{R}^{+}\cup\{0\}$,  $a\lesssim b$ means that there exists a positive constant $c>0$ such that $a\leq c b$. $a\gtrsim b$ means $b\lesssim a$. We write $a\sim b$ whenever $a\lesssim b$ and $b\lesssim a$. On some calculations, we use the notation $a\lesssim_r b$, $a\gtrsim_r b$, and $a\sim_r b$, where the subindex emphasizes the dependence of the implicit constant on $r>0$.
\item Given $a>0$, $a^{+}=a+\delta$, $a^{-}=a-\delta$ for $\delta>0$ small.  
    \item  The \emph{commutator} between operators $A$ and $B$ is denoted as $[A;B]=AB-BA$. 
    \item  We denote by $\mathbb{Z}^{+}_{0}=\mathbb{Z}^{+}\cup\{0\}$.
    \item The \emph{Fourier transform} is given by 
\begin{equation*}
\widehat{f}(\xi) = \int_{\mathbb{R}^{n}} e^{-ix \cdot \xi} f(x) \,\mathrm{d}x, \quad f \in \mathcal{S}(\mathbb{R}^{n}).
\end{equation*}
\item To facilitate calculations involving the Fourier transform, it is convenient to introduce the notation $D_{j}=-i\partial_{x_{j}},\,\, j\in \{1,2,\cdots,n\}$. This allows us to express higher-order derivatives compactly using either $\partial_{x_1}^{\beta}$ or, when appropriate, the alternative notation $D^{\beta}=D_{1}^{\beta_{1}}D_{2}^{\beta_{2}}\cdots D_{n}^{\beta_{n}}$. 
\item $C^\infty_0(\mathbb{R}^n)$ denotes the space of compactly supported smooth functions. $\mathcal{S}(\mathbb{R}^n)$ denotes the \emph{Schwartz class} of functions. 
\item Given a function $\chi\in \mathcal{S}(\mathbb{R}^n)$, we denote by $\chi(D)$ the Fourier multiplier by the function $\chi(\xi)$.
\item $(\cdot,\cdot)$ denotes the standard inner product in $L^2(\mathbb{R}^n)$.
\end{itemize}
In this manuscript, we consider $\chi \in C^{\infty}_{0}(\mathbb{R}^{n})$ be a smooth cutoff function satisfying the following conditions:
\begin{itemize}
    \item[(i)] $0 \leq \chi \leq 1$, $\chi$ radial,
    \item[(ii)] $\chi$ acts as a smooth indicator of the unit ball, that is, $\chi(\xi)=1$, for $|\xi|\leq 1$, and $\chi(\xi)=0$, whenever $|\xi|\geq 2$.
\end{itemize}

\subsection{Pseudo-differential calculus and commutator estimates}

Adapting $L^2$-monotonicity for the fZK model in the framework of Martel and Merle \cite{MartelMerle2001} presents significant challenges. The main difficulty lies in the nonlocal nature of the operators appearing in energy estimates. This prevents the usual application of integration by parts, which is an essential tool in many classical approaches. To overcome this obstacle, pseudo-differential operators have proven an effective alternative, as explored in \cite{kenig2011local}.  

The class of symbols that combines regularity and decay properties introduced in \cite{Hormander2007,kenig2011local,mendez2025decay} is fundamental for our work. This specific class emerges in different contexts, e.g., it was developed to capture the \emph{propagation of regularity and decay phenomena} for solutions of the Zakharov-Kuznetsov equation, see \cite{mendez2025decay}.  
\begin{defn}
Let $\sigma \in \mathbb{R}^{n}$ and $m, \omega, q \in \mathbb{R}$. We define the class of symbols $S^{m,q}_{\sigma,\omega}(\mathbb{R}^{n} \times \mathbb{R}^{n})$ as the set of functions $a \in C^{\infty}(\mathbb{R}^{n} \times \mathbb{R}^{n})$ such that for all multi-indices $\alpha, \beta \in (\mathbb{Z}_{0}^{+})^{n}$, there exists a positive constant $c$ satisfying
\begin{equation*}
\left|\partial_{x_1}^{\alpha}\partial_{\xi}^{\beta}a(x,\xi)\right| \leq c \langle \sigma \cdot x + \omega \rangle^{q-|\alpha|} \langle \xi \rangle^{m-|\beta|}, \quad \text{for all } x, \xi \in \mathbb{R}^{n}.
\end{equation*}
We refer to an element in $S^{m,q}_{\sigma,\omega}(\mathbb{R}^{n} \times \mathbb{R}^{n})$ as a symbol of order $(m,q)$.
\end{defn}
The previous symbols are fundamental because their structure helps explain how regularity propagates in solutions. Additionally, as we will show, they are critical for studying the monotonicity of the \(L^2\)-mass. This approach opens new possibilities for exploring the dynamics of non-local operators, and the usefulness of these tools will become increasingly clear as the analysis progresses.

Given a symbol $a$ in the class $S^{m,q}_{\sigma,\omega}$, we define the \emph{associated operator} $a(x, D)$ by
\begin{equation*}
a(x,D)f(x) = \frac{1}{(2\pi)^{n}} \int_{\mathbb{R}^{n}} e^{ix \cdot \xi} a(x,\xi) \widehat{f}(\xi) \,\mathrm{d}\xi,
\end{equation*}

A fundamental property in studying pseudo-differential operators is their continuity in Lebesgue spaces. 

\begin{thm}\label{continuity}
Let $\sigma \in \mathbb{R}^n$ and $m, \omega, q \in \mathbb{R}$ be fixed, and $1<p<\infty$. The operator $a(x, D)$, associated with the symbol $a$, is continuous on $L^p(\mathbb{R}^n)$ in the sense
\begin{equation}
\|a(x, D)f\|_{L^p} \lesssim \|\langle \sigma \cdot x + \omega \rangle^q \langle \nabla_x \rangle^m f\|_{L^p}, \quad f \in \mathcal{S}(\mathbb{R}^n),
\end{equation}
where the implicit constant in the inequality is independent of $f$.
\end{thm}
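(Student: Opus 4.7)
The plan is to reduce the claimed estimate to the classical $L^p$-boundedness of zero-order pseudo-differential operators in the H\"ormander class $S^0_{1,0}$.

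First, I would set $h := \langle \sigma \cdot x + \omega \rangle^q \langle \nabla_x \rangle^m f$, so that $f = \langle \nabla_x \rangle^{-m}\bigl(\langle \sigma \cdot x + \omega \rangle^{-q} h\bigr)$, and the target inequality becomes $\|T h\|_{L^p} \lesssim \|h\|_{L^p}$ for the composed operator
\[
T := a(x, D) \circ \langle \nabla_x \rangle^{-m} \circ M_{\langle \sigma \cdot x + \omega \rangle^{-q}},
\]
where $M_\phi$ denotes multiplication by the function $\phi$. Thus the problem reduces to proving that $T$ is bounded on $L^p(\mathbb{R}^n)$ for $1<p<\infty$.

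Next, I would compute the full symbol $\sigma_T(x,\xi)$ of $T$ by successively applying the Kohn--Nirenberg composition rule. The principal term is
\[
a(x,\xi)\, \langle \xi \rangle^{-m}\, \langle \sigma \cdot x + \omega \rangle^{-q},
\]
and by the defining symbol estimates of $S^{m,q}_{\sigma,\omega}$ together with the Leibniz rule, one directly checks
\[
\Bigl| \partial_x^\gamma \partial_\xi^\beta \bigl[ a(x,\xi)\, \langle \xi \rangle^{-m}\, \langle \sigma \cdot x + \omega \rangle^{-q} \bigr] \Bigr| \lesssim \langle \xi \rangle^{-|\beta|}
\]
uniformly in $x$, since each factor of $\langle \sigma \cdot x + \omega \rangle^{q}$ generated by differentiating $a$ is exactly cancelled by the companion weight, and each $\xi$-derivative drops the order by one. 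Hence the principal symbol belongs to $S^0_{1,0}(\mathbb{R}^n \times \mathbb{R}^n)$. The subsequent terms in the asymptotic expansion have the form
\[
\frac{(-i)^{|\alpha|}}{\alpha!}\, \partial_\xi^\alpha a(x,\xi)\, \partial_x^\alpha\!\bigl[\langle \xi \rangle^{-m}\, \langle \sigma \cdot x + \omega \rangle^{-q}\bigr],
\]
which inherit additional $\xi$-decay from each $\partial_\xi^\alpha a$ and additional $x$-decay from $\partial_x^\alpha \langle \sigma \cdot x + \omega \rangle^{-q}$, and therefore lie in $S^{-|\alpha|}_{1,0}$.

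Finally, an application of the classical $L^p$-continuity theorem for operators of order zero in $S^0_{1,0}$ (e.g.\ Stein's \emph{Harmonic Analysis}, Ch.~VI, or H\"ormander, Vol.~III) yields $\|T h\|_{L^p} \lesssim \|h\|_{L^p}$ for $1<p<\infty$, which is exactly the desired estimate. The main technical obstacle I expect is the rigorous control of the remainder after truncating the asymptotic expansion at a sufficiently high order: because the class $S^{m,q}_{\sigma,\omega}$ couples polynomial $x$-growth with standard $\xi$-symbolic behavior, one must verify that the remainder still defines an $L^p$-bounded operator. This can be handled either by writing the remainder as an oscillatory integral, integrating by parts in $\xi$ to produce sufficient kernel decay, and controlling it via Schur's test followed by interpolation, or by adapting the composition arguments already developed in \cite{Hormander2007, kenig2011local, mendez2025decay} for precisely this mixed class of symbols.
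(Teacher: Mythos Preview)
The paper does not give its own proof of this theorem; it simply cites \cite{mendez2025decay}. Your reduction to the $L^p$-boundedness of an operator with symbol in $S^0_{1,0}$ is correct and is exactly the expected route. Two minor remarks that streamline your argument: first, the composition $a(x,D)\circ\langle\nabla_x\rangle^{-m}$ is \emph{exact} rather than merely asymptotic, because the second factor has symbol $\langle\xi\rangle^{-m}$ independent of $x$, so the Kohn--Nirenberg expansion terminates at the zeroth term and the composed symbol is precisely $a(x,\xi)\langle\xi\rangle^{-m}\in S^{0,q}_{\sigma,\omega}$. Second, the only genuine asymptotic step is then the right composition with the multiplication $M_{\langle\sigma\cdot x+\omega\rangle^{-q}}$; the remainder you flag as the ``main technical obstacle'' is handled by the same composition calculus for the mixed class $S^{m,q}_{\sigma,\omega}$ that the paper already quotes from \cite{mendez2025decay} (cf.\ Proposition~\ref{PO1}), so no separate Schur-test argument is needed.
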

\begin{proof}
See  \cite{mendez2025decay}.
\end{proof}
\begin{defn}  
Let $\Sigma$ be a class of symbols, and suppose that $a(x, \xi) \in \Sigma$. We define the operator $\Psi_a$ to belong to the class $\mathrm{OP}\Sigma$.  
\end{defn}  
To obtain certain estimates for commutators, we will utilize the following asymptotic analysis.  
\begin{prop}\label{PO1}  
Let $\sigma \in \mathbb{R}^n$ and $m_1, m_2, \omega, q_1, q_2 \in \mathbb{R}$. Consider the symbols $a \in \mathbb{S}_{\sigma,\omega}^{m_1,q_1}$ and $b \in \mathbb{S}_{\sigma,\omega}^{m_2,q_2}$. Then, there exists a symbol $c \in \mathbb{S}^{m_1+m_2-1,q_1+q_2-1}_{\sigma,\omega}$ such that  
\begin{equation*}  
     [\Psi_a ; \Psi_b] = \Psi_a \Psi_b - \Psi_b \Psi_a =: \Psi_c.  
\end{equation*}  
Moreover, the symbol $c$ is given by  
\begin{equation*}  
    c \simeq \sum_{|\beta|>0} \frac{(-i)^{|\beta|}}{\beta !} \Big( (\partial^{\beta}_{\xi} a) (\partial^{\beta}_{x} b) - (\partial^{\beta}_{\xi} b) (\partial^{\beta}_{x} a) \Big),  
\end{equation*}  
in the sense that for all $N \geq 2$,  
\begin{equation*}  
    c - \sum_{0<|\beta| < N} \frac{(-i)^{|\beta|}}{ \beta !} \Big( (\partial^{\beta}_{\xi} a) (\partial^{\beta}_{x} b) - (\partial^{\beta}_{\xi} b) (\partial^{\beta}_{x} a) \Big) \in \mathbb{S}^{m_1+m_2-N, q_1+q_2-N}_{\sigma, \omega}.  
\end{equation*}  
\end{prop}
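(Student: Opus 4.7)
The plan is to adapt the standard oscillatory-integral formalism for composition of pseudo-differential operators to the weighted class $\mathbb{S}^{m,q}_{\sigma,\omega}$. The key structural observation that makes this adaptation work is that the weight $\langle \sigma\cdot x + \omega\rangle$ is slowly varying: $|\partial^\alpha_x \langle \sigma\cdot x + \omega\rangle^q| \lesssim \langle \sigma\cdot x + \omega\rangle^{q-|\alpha|}$, which matches the defining estimates of the symbol class itself. Consequently, the pseudo-differential calculus in $\mathbb{S}^{m,q}_{\sigma,\omega}$ parallels the Kohn--Nirenberg one, with the $x$-decay index $q$ playing a role analogous to the order $m$ in $\xi$.

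First, I would establish the composition identity $\Psi_a \Psi_b = \Psi_{a\#b}$ with
$$a\#b(x,\eta) = \frac{1}{(2\pi)^n}\iint e^{-iz\cdot\zeta}\, a(x,\eta+\zeta)\, b(x+z,\eta)\,dz\,d\zeta,$$
interpreted as an oscillatory integral. This comes from unwinding the definitions of $\Psi_a(\Psi_b f)$ using Fourier inversion twice and performing the change of variables $(y,\xi)\mapsto (x+z,\eta+\zeta)$. Applying Taylor's formula to $a(x,\eta+\zeta)$ in $\zeta$ around $\zeta=0$, using the identity $\zeta^\beta e^{-iz\cdot\zeta}=i^{|\beta|}\partial^\beta_z e^{-iz\cdot\zeta}$, and integrating by parts in $z$, I obtain for any $N\geq 2$
$$a\#b(x,\eta) = \sum_{|\beta|<N}\frac{(-i)^{|\beta|}}{\beta!}(\partial^\beta_\xi a)(x,\eta)\,(\partial^\beta_x b)(x,\eta) + r_N(x,\eta).$$
The commutator symbol is then $c = a\#b - b\#a$, in which the $\beta=0$ terms cancel because $ab-ba=0$, producing the stated antisymmetric asymptotic.

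The step I expect to be the main obstacle is verifying that the remainder $r_N$ belongs to $\mathbb{S}^{m_1+m_2-N,\,q_1+q_2-N}_{\sigma,\omega}$. The Taylor remainder has the form
$$r_N(x,\eta) = \sum_{|\beta|=N}\frac{N}{\beta!}\int_0^1(1-t)^{N-1}\Big[\frac{1}{(2\pi)^n}\iint e^{-iz\cdot\zeta}(\partial^\beta_\xi a)(x,\eta+t\zeta)(\partial^\beta_x b)(x+z,\eta)\,dz\,d\zeta\Big]dt.$$
To bound it I would combine the Peetre-type inequality $\langle \eta+t\zeta\rangle^{m_1-N} \lesssim \langle \eta\rangle^{m_1-N}\langle \zeta\rangle^{|m_1-N|}$ with the slowly varying weight estimate $\langle \sigma\cdot(x+z)+\omega\rangle^{q_2} \lesssim \langle \sigma\cdot x+\omega\rangle^{q_2}\langle z\rangle^{|q_2|}$, and then generate the missing decay in $(z,\zeta)$ by iterated integration by parts against $e^{-iz\cdot\zeta}$ using the dual operators $\langle z\rangle^{-2}(1-\Delta_\zeta)$ and $\langle \zeta\rangle^{-2}(1-\Delta_z)$; choosing the number of such integrations large enough renders both integrals absolutely convergent. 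Propagating these bounds through $\partial^\alpha_x\partial^\gamma_\eta r_N$, and invoking the slowly varying property so that each extra $x$-derivative lowers the power of $\langle \sigma\cdot x+\omega\rangle$ by exactly one, yields the claimed weighted estimates. The underlying machinery is classical, and once $\mathbb{S}^{m,q}_{\sigma,\omega}$ is recognized as a weighted Hörmander class, the proposition follows along the lines of \cite{Hormander2007,mendez2025decay}.
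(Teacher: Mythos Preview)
Your proposal is correct and follows the standard Kohn--Nirenberg composition calculus, adapted to the weighted class via the slowly varying property of $\langle \sigma\cdot x+\omega\rangle$. The paper itself does not give a proof but simply refers to \cite{mendez2025decay}, where this pseudo-differential calculus for the classes $\mathbb{S}^{m,q}_{\sigma,\omega}$ is developed; your sketch is exactly the argument one expects to find there.
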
  
\begin{proof}
We refer to \cite{mendez2025decay}.
\end{proof}
Additionally, we require the following commutator estimate for the operator $|\nabla_x|$,
which plays a crucial role in handling nonlinear interactions and establishing weighted energy estimates.
\begin{prop}\label{commutatorestim1}
Let $1<p<\infty$, then
\begin{equation}
   \left\|\, |\nabla_x|(fg)-f |\nabla_x|g-\sum_{j=1}^n\partial_{x_j}f\mathcal{R}_j g\right\|_{L^p}\lesssim \||\nabla_x| f\|_{L^{\infty}}\|g\|_{L^p}, 
\end{equation}
where $\mathcal{R}_j$ denotes the Riesz transform operator in the $j$-direction.
\end{prop}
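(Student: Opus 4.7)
The plan is to view the expression on the left as a bilinear Fourier multiplier. With $\mu$ and $\eta$ denoting the frequency variables dual to $f$ and $g$ respectively, a direct Fourier computation produces the bilinear symbol
\[
m(\mu,\eta) \;=\; |\mu+\eta| - |\eta| - \frac{\mu\cdot\eta}{|\eta|},
\]
which is exactly the first-order Taylor remainder of $\zeta\mapsto|\zeta|$ expanded at $\zeta = \eta$. Using the Hessian bound $|D^2|\zeta||\lesssim 1/|\zeta|$ (for $\zeta\neq 0$), one obtains the two-sided estimate
\[
|m(\mu,\eta)| \;\lesssim\; \min\!\bigl(|\mu|,\,|\mu|^2/|\eta|\bigr),
\]
with analogous bounds on derivatives of $m$. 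Writing $\widehat{f}(\mu) = \widehat{|\nabla_x|f}(\mu)/|\mu|$, the operator on the left factors as $T(f,g) = T_{\widetilde m}(|\nabla_x|f,\,g)$ with normalized bilinear symbol $\widetilde m(\mu,\eta) := m(\mu,\eta)/|\mu|$. The function $\widetilde m$ is bounded and satisfies H\"ormander--Mihlin-type derivative estimates away from the coordinate axes $\{\mu = 0\}\cup\{\eta = 0\}$.

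Next, I would prove the endpoint bilinear bound $\|T_{\widetilde m}(h_1,h_2)\|_{L^p} \lesssim \|h_1\|_{L^\infty}\|h_2\|_{L^p}$ for $1<p<\infty$. This is achieved by splitting $\widetilde m$ into three paraproduct pieces via Littlewood--Paley projectors corresponding to the regimes $|\mu|\ll|\eta|$, $|\mu|\sim|\eta|$, and $|\mu|\gg|\eta|$. In the low--high regime, the refined decay $|\widetilde m|\lesssim|\mu|/|\eta|$ produces a convergent dyadic sum after placing $h_1$ in $L^\infty$ and exploiting the $L^p$-boundedness of the Riesz transforms on the $g$-factor. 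In the high--low and diagonal regimes, the cruder bound $|\widetilde m|\lesssim 1$, combined with Littlewood--Paley square-function estimates and the boundedness of Riesz transforms on $L^p$, yields the same conclusion. Taking $h_1 = |\nabla_x|f$ and $h_2 = g$ then produces the desired inequality.

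The main obstacle is the lack of smoothness of $\widetilde m$ on $\{\mu = 0\}\cup\{\eta = 0\}$, which prevents a direct application of the pseudo-differential calculus of Proposition~\ref{PO1} or of the classical Coifman--Meyer multiplier theorem. This is bypassed by smooth frequency cutoffs separating low- and high-frequency contributions of each factor: the low-frequency pieces are handled via Bernstein's inequality together with $L^p$-boundedness of Riesz transforms, while the high-frequency pieces lie in the smooth regime where Coifman--Meyer (or an equivalent paraproduct argument) applies. The careful bookkeeping of these cutoffs, combined with the Taylor-remainder gain in the low--high regime, is what ultimately delivers the sharp bound with $\||\nabla_x|f\|_{L^\infty}$ on the right-hand side rather than the cruder $\|\nabla f\|_{L^\infty}$.
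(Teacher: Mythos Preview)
The paper does not give its own proof; it simply records this as a special case of \cite[Theorem~1.2(3)]{DonLi2019}. Your sketch, by contrast, outlines an actual argument via bilinear Fourier multipliers and paraproduct decomposition, which is indeed how the Dong--Li result (and Kato--Ponce-type refinements generally) is established. In that sense your approach and the cited reference are aligned.

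One step deserves more care than your outline suggests. In the high--low regime $|\mu|\gg|\eta|$ the normalized symbol does not decay: one has $\widetilde m(\mu,\eta)\to 1-\hat\mu\cdot\hat\eta$ as $|\eta|/|\mu|\to 0$, so the crude bound $|\widetilde m|\lesssim 1$ leaves you with a paraproduct of the form $\sum_j (P_j h_1)(P_{<j-3}h_2)$ (and its Riesz-transformed sibling) in which $h_1=|\nabla_x|f\in L^\infty$ sits in the \emph{high}-frequency slot. Plain Littlewood--Paley square-function estimates do not control this: the square function of an $L^\infty$ function lies only in $\mathrm{BMO}$, not $L^\infty$, and this is exactly the endpoint at which Coifman--Meyer fails. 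What closes the argument is the paraproduct bound with a $\mathrm{BMO}$ coefficient, $\|\sum_j (P_j a)(P_{<j}b)\|_{L^p}\lesssim\|a\|_{\mathrm{BMO}}\|b\|_{L^p}$, together with $L^\infty\hookrightarrow\mathrm{BMO}$ and the fact that Riesz transforms map $L^\infty\to\mathrm{BMO}$ (so that $\partial_{x_k}f=-\mathcal{R}_k|\nabla_x|f$ lands in $\mathrm{BMO}$ with norm $\lesssim\||\nabla_x|f\|_{L^\infty}$). Your phrase ``square-function estimates and boundedness of Riesz transforms on $L^p$'' understates this; without invoking the $\mathrm{BMO}$ paraproduct estimate the high--low piece would stall, and a naive treatment would only recover the weaker right-hand side $\|\nabla f\|_{L^\infty}\|g\|_{L^p}$.
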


\begin{proof}
This is a particular result of \cite[Theorem 1.2 (3)]{DonLi2019}.    
\end{proof}
\subsection{Linearized operator and linearization}

In this subsection, our results are restricted to dimension $n=2$, which is where we will study the Shrira equation \eqref{E:HBO}. We first present some key results for the \emph{linearized operator} associated with the Shrira equation \eqref{E:HBO} in $\mathbb{R}^2$, that is
\begin{equation}\label{linearizedop}
L=|\nabla_x|+1-Q,    
\end{equation}
which is obtained by linearization around the ground state solution $Q$ of \eqref{E:HBO}. 

We work with the generator $\Lambda$ of the scaling symmetry given by 
\begin{equation}\label{scalingOp}
\Lambda f=f+x\cdot \nabla f, \quad x\in \mathbb{R}^2.
\end{equation}
The following results are deduced in \cite{FrankLenzmannSilvestre2016}, see also \cite{FrankLenzmann2013}.  

\begin{thm}[Properties of $L$] \label{propL} The following holds for an operator $L$ in \eqref{linearizedop}:
\begin{itemize}
    \item $\ker L=\spn \{\partial_{x_1} Q,\partial_{x_2} Q\}$.
    \item $L$ has a unique single negative eigenvalue $-\mu_0$ (with $\mu_0>0$) associated to a positive radial symmetric eigenfunction $\psi_0\in H^{\infty}(\mathbb{R}^2)$.  Moreover, for any multi-index $\beta$,
    \begin{equation}
        |\partial^{\beta}\psi_0(x)|\lesssim \langle x \rangle^{-3-|\beta|}, \, \, \text{ for all }\,\, x\in \mathbb{R}^2.
    \end{equation}
    \item (Orthogonality Condition). There exists some $c_0>0$ such that for all $f\in H^{\frac{1}{2}}(\mathbb{R}^2)$ with $f$ orthogonal to the set $\spn\{\partial_{x_1}Q,\partial_{x_2}Q,\psi_0\}$, that is,
    \begin{equation}
    (f,\psi_0)=(f,\partial_{x_1}Q)=(f,\partial_{x_2}Q)=0,
    \end{equation}
one has
\begin{equation}\label{Coercond}
(Lf,f)\geq c_0 \|f\|_{H^{\frac{1}{2}}}^2.
\end{equation}
\end{itemize}
\end{thm}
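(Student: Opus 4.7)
\medskip

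\noindent\textbf{Proof proposal.} My plan is to treat the three parts in order, since each one sets up structural information used by the next. \emph{The deepest input is the nondegeneracy of the ground state}, which I will quote from Frank--Lenzmann--Silvestre; the rest is spectral theory.

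First, for the kernel, I would differentiate the ground state equation \eqref{EQ:groundState} (with $\alpha=1$, $n=2$) in each $x_j$ to obtain $L(\partial_{x_j}Q)=0$, so that $\spn\{\partial_{x_1}Q,\partial_{x_2}Q\}\subset\ker L$. For the reverse inclusion I would invoke the nondegeneracy theorem for positive radial ground states of $|\nabla_x|Q+Q=\tfrac12 Q^2$ in $\mathbb{R}^2$ proved in \cite{FrankLenzmannSilvestre2016}: expanding an arbitrary kernel element in spherical harmonics $Y_\ell(\theta)$ reduces $L$ to a family of one-dimensional radial operators $L_\ell$ on $[0,\infty)$, the nondegeneracy result states that $L_0$ is invertible on the even sector, while $L_1$ has exactly a one-dimensional kernel (generated by the angular derivatives of $Q$) and $L_\ell>0$ for $\ell\geq 2$. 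Combining these angular statements yields $\ker L=\spn\{\partial_{x_1}Q,\partial_{x_2}Q\}$.

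Next, to produce a negative eigenvalue, I would compute $LQ=|\nabla_x|Q+Q-Q^2=\tfrac12 Q^2-Q^2=-\tfrac12 Q^2$, whence the Rayleigh quotient satisfies
\begin{equation*}
\frac{(LQ,Q)}{\|Q\|_{L^2}^2}=-\frac{1}{2}\,\frac{\int_{\mathbb{R}^2}Q^{3}\,\mathrm{d}x}{\|Q\|_{L^2}^{2}}<0,
\end{equation*}
since $Q>0$ by \eqref{poldecayGS}. Because $L$ is a bounded perturbation of the selfadjoint operator $|\nabla_x|+1$ on $L^2(\mathbb{R}^2)$ with essential spectrum $[1,\infty)$ (the multiplication by $Q$ is relatively compact by \eqref{Qdecay}), the negative spectrum is discrete. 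Simplicity and positivity of the ground-state eigenfunction of $L$ follow from a Perron--Frobenius/maximum principle argument adapted to the nonlocal operator $|\nabla_x|$ (this is standard for generators of nonnegative semigroups, and the kernel of $e^{-t|\nabla_x|}$ is strictly positive), which gives a unique lowest eigenvalue $-\mu_0<0$ and a strictly positive eigenfunction $\psi_0$. Radial symmetry of $\psi_0$ then follows because $L$ commutes with rotations and the lowest eigenspace is one-dimensional. To rule out further negative eigenvalues I would again use the spherical-harmonic reduction: only the sector $\ell=0$ can produce negative modes (since $L_1$ is nonnegative with kernel generated by $\partial_r Q$, and $L_\ell\geq 0$ for $\ell\geq 2$), and within $\ell=0$ the ODE analysis of $L_0$ gives exactly one negative mode. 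Finally, the pointwise decay $|\partial^{\beta}\psi_0(x)|\lesssim\langle x\rangle^{-3-|\beta|}$ would be extracted by writing $\psi_0=(|\nabla_x|+1+\mu_0)^{-1}(Q\psi_0)$ and iterating: since $Q$ has decay of order $\langle x\rangle^{-3}$ by \eqref{poldecayGS}, a bootstrap in weighted spaces using the kernel of $(|\nabla_x|+1+\mu_0)^{-1}$ gives the claimed decay, while derivatives are handled by elliptic regularity.

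Finally, for the coercivity \eqref{Coercond}, I would use the spectral decomposition furnished by the previous two points: any $f\in H^{1/2}(\mathbb{R}^{2})$ decomposes as
\begin{equation*}
f=a\psi_0+b_1\partial_{x_1}Q+b_2\partial_{x_2}Q+f^{\perp},\qquad f^{\perp}\perp\spn\{\psi_0,\partial_{x_1}Q,\partial_{x_2}Q\},
\end{equation*}
with $a,b_j$ read off from $(f,\psi_0)$ and $(f,\partial_{x_j}Q)$ up to the Gram matrix; the orthogonality assumption forces $a=b_1=b_2=0$. On the invariant subspace $f^{\perp}$, the operator $L$ is bounded below by its next eigenvalue (or by the bottom of the essential spectrum $=1$), so $(Lf^{\perp},f^{\perp})\geq c_{0}'\|f^{\perp}\|_{L^{2}}^{2}$. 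Upgrading from $L^2$ to $H^{1/2}$ is done by the standard interpolation trick: adding a small multiple of $\|\,|\nabla_x|^{1/2}f\|_{L^{2}}^{2}=(|\nabla_x|f,f)=(Lf,f)+((Q-1)f,f)$ and absorbing the lower-order term using the $L^{2}$ lower bound, which yields $(Lf,f)\geq c_{0}\|f\|_{H^{1/2}}^{2}$.

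The main obstacle is clearly the nondegeneracy statement in step one, which I would not attempt to reprove; the remaining spectral-theoretic steps are then standard once nondegeneracy and simplicity of the lowest eigenvalue are in hand.
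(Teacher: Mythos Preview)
Your proposal is essentially correct and aligns with the paper's treatment: the paper does not prove Theorem~\ref{propL} at all but simply cites \cite{FrankLenzmannSilvestre2016} (see also \cite{FrankLenzmann2013}) for the full statement. Your sketch correctly identifies the nondegeneracy result of Frank--Lenzmann--Silvestre as the deep input and builds the remaining spectral consequences (Perron--Frobenius simplicity, decay via the resolvent bootstrap, coercivity from the spectral gap) by standard arguments, which is exactly the content of those references.
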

\begin{prop}\label{scalingprop}
The following identities hold
\begin{itemize}
    \item[(i)] $L(\Lambda Q)=-Q$,
    \item[(ii)] $\int_{\mathbb{R}^2} Q\Lambda Q\, \mathrm{d}x=0$.
\end{itemize}
\end{prop}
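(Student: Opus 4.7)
The plan is to derive both identities from the scaling structure of the equation, exploiting the fact that the one-parameter family $Q_c(x) = c\,Q(cx)$, $c>0$, provides solitary-wave profiles for \eqref{E:HBO} that are rescalings of the ground state $Q$ solving \eqref{EQ:groundState} with $\alpha=1$, $n=2$.

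For part (i), the first step is to observe that each $Q_c$ satisfies the $c$-dependent stationary equation obtained by plugging the traveling-wave ansatz \eqref{travequ} into \eqref{E:HBO}, namely
\begin{equation*}
cQ_c + |\nabla_x| Q_c - \tfrac{1}{2} Q_c^2 = 0, \qquad c>0,
\end{equation*}
which is consistent with \eqref{EQ:groundState} at $c=1$ (one verifies this by direct substitution using $|\nabla_x|(c Q(c\,\cdot\,))(x) = c^2 (|\nabla_x|Q)(cx)$). Differentiating this identity in $c$ at $c=1$ and using
\begin{equation*}
\partial_c Q_c\big|_{c=1}(x) = \partial_c\bigl[c\,Q(cx)\bigr]\big|_{c=1} = Q(x) + x\cdot \nabla Q(x) = \Lambda Q(x),
\end{equation*}
I obtain
\begin{equation*}
Q + \Lambda Q + |\nabla_x|\Lambda Q - Q\,\Lambda Q = 0,
\end{equation*}
which rearranges to $(|\nabla_x|+1-Q)\Lambda Q = -Q$, i.e.\ $L(\Lambda Q) = -Q$.

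For part (ii), the cleanest argument is to use the $L^2$-critical scaling \eqref{scaling1}: since $\|Q_c\|_{L^2}^2 = \int_{\mathbb{R}^2} c^2 Q(cx)^2\,\mathrm{d}x = \|Q\|_{L^2}^2$ is independent of $c$, differentiating in $c$ at $c=1$ gives $2(Q,\Lambda Q)=0$. Equivalently, a direct integration by parts yields
\begin{equation*}
\int_{\mathbb{R}^2} Q\,\Lambda Q\,\mathrm{d}x = \|Q\|_{L^2}^2 + \tfrac{1}{2}\int_{\mathbb{R}^2} x\cdot \nabla(Q^2)\,\mathrm{d}x = \|Q\|_{L^2}^2 - \tfrac{n}{2}\|Q\|_{L^2}^2,
\end{equation*}
which vanishes precisely because $n=2$, i.e.\ the problem is $L^2$-critical.

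There is no real obstacle here; the only point to check carefully is that one may differentiate under the integral and commute $\partial_c$ with the nonlocal operator $|\nabla_x|$. This is routine given the smoothness $Q\in H^\infty$ and the polynomial decay bounds \eqref{poldecayGS}--\eqref{Qdecay}, which guarantee that $c\mapsto Q_c$ is smooth with values in sufficiently regular weighted Sobolev spaces, so that all integrals and Fourier multipliers involved depend smoothly on $c$ and the boundary terms in the integration by parts vanish.
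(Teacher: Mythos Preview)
Your proof is correct and is precisely the standard argument; the paper's own proof is a single line (``straightforward after applying integration by parts''), and your scaling differentiation for (i) together with the integration-by-parts computation for (ii) fills in exactly those details.
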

\begin{proof}
The proof is straightforward after applying integration by parts.
\end{proof}

\section{Proof of Lemma \ref{takamajaka}}\label{SectionMainlemma}

It is worth mentioning that in the deduction of Lemma \ref{takamajaka}, we work on an arbitrary spatial dimension $n\geq 1$. In the instability results, we focus on the two-dimensional setting $n=2$, which corresponds to the Shrira equation \eqref{E:HBO}.

We begin by recalling some decay properties of the kernel determined by the operator $\partial_{x_1}|\nabla_x|^{\alpha}$, when it is concentrated at low frequencies.

\begin{prop}\label{kernel_decay}
Let $\alpha>0$ and $\chi\in C^{\infty}_0(\mathbb{R}^n)$ be the function introduced above. Define also
\begin{equation*}
    \Omega(x) := \frac{1}{(2\pi)^{n}} \int_{\mathbb{R}^{n}} e^{ix\cdot \xi} |\xi|^{\alpha} (i\xi_{1})\,  \chi(\xi) \,\mathrm{d}\xi.
\end{equation*}
Then, for all $x \in \mathbb{R}^{n}$, we have the decay estimate
\begin{equation}\label{goal1}
    \left| \Omega(x) \right| \lesssim_{\alpha,n} \frac{1}{\langle x \rangle^{n+\alpha+1}}.
\end{equation}
\end{prop}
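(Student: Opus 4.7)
The plan is to exploit the interplay between the compact support of $\chi$ and the low-frequency singularity of the symbol $m(\xi):=|\xi|^\alpha(i\xi_1)\chi(\xi)$, which is the only source of failure of smoothness. Away from $\xi=0$, $m$ is $C^\infty$ with compact support, so any slow decay of $\Omega$ must come from the behavior near $\xi=0$. A dyadic splitting combined with repeated integration by parts against the oscillatory factor $e^{ix\cdot\xi}$ is the natural tool. Note also that, writing $\Omega(x)=\partial_{x_1}\mathcal{F}^{-1}[|\xi|^\alpha\chi](x)$, one gains one extra power of decay over the classical behavior $|x|^{-n-\alpha}$ of $\mathcal{F}^{-1}[|\xi|^\alpha]$; the proof should reflect this.

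First I would dispose of the short-range case. For $|x|\le 1$, simply estimate
\begin{equation*}
|\Omega(x)|\le \frac{1}{(2\pi)^n}\int_{\mathbb{R}^n}|\xi|^{\alpha+1}|\chi(\xi)|\,\mathrm{d}\xi \lesssim_{\alpha,n} 1,
\end{equation*}
where convergence near $\xi=0$ uses $\alpha+1>-n$, and near infinity uses the compact support of $\chi$. Thus $|\Omega(x)|\lesssim 1 \lesssim \langle x\rangle^{-(n+\alpha+1)}$ on this set.

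Now assume $|x|\ge 1$ and set $\rho:=1/|x|$. Pick an auxiliary smooth radial function $\eta\in C_0^\infty(\mathbb{R}^n)$ with $\eta\equiv 1$ on $|\xi|\le 1$ and $\eta\equiv 0$ on $|\xi|\ge 2$, and split
\begin{equation*}
\Omega(x)= \underbrace{\frac{1}{(2\pi)^n}\int_{\mathbb{R}^n} e^{ix\cdot\xi}m(\xi)\eta(\xi/\rho)\,\mathrm{d}\xi}_{=: \Omega_{\mathrm{low}}(x)} + \underbrace{\frac{1}{(2\pi)^n}\int_{\mathbb{R}^n} e^{ix\cdot\xi}m(\xi)\bigl(1-\eta(\xi/\rho)\bigr)\,\mathrm{d}\xi}_{=:\Omega_{\mathrm{high}}(x)}.
\end{equation*}
For $\Omega_{\mathrm{low}}$ the phase is harmless and we simply estimate the absolute value:
\begin{equation*}
|\Omega_{\mathrm{low}}(x)|\lesssim \int_{|\xi|\le 2\rho}|\xi|^{\alpha+1}\,\mathrm{d}\xi \lesssim \rho^{\,n+\alpha+1}=|x|^{-(n+\alpha+1)},
\end{equation*}
which is the desired bound.

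For $\Omega_{\mathrm{high}}$, integrate by parts using the differential operator
\begin{equation*}
\mathcal{L}:=\frac{x}{i|x|^2}\cdot\nabla_\xi,\qquad \mathcal{L}\, e^{ix\cdot\xi}=e^{ix\cdot\xi}.
\end{equation*}
Applying $\mathcal{L}^N$ and transferring derivatives onto the symbol yields, for each integer $N$,
\begin{equation*}
|\Omega_{\mathrm{high}}(x)|\lesssim |x|^{-N}\sum_{|\beta|=N}\int_{|\xi|\ge \rho} \bigl|\partial_\xi^\beta\bigl(|\xi|^\alpha\xi_1\,\chi(\xi)(1-\eta(\xi/\rho))\bigr)\bigr|\,\mathrm{d}\xi.
\end{equation*}
Leibniz-expanding, each $\xi$-derivative costs one power of $|\xi|^{-1}$ on the singular factor $|\xi|^{\alpha+1}$, while derivatives falling on $\chi$ or on the cutoff $1-\eta(\cdot/\rho)$ are harmless: the former is controlled on $|\xi|\lesssim 1$, and each derivative of $1-\eta(\cdot/\rho)$ produces a factor $\rho^{-1}=|x|$ but is restricted to the annular shell $|\xi|\sim \rho$, so after integration the two effects balance. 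Choosing $N$ large enough so that $N>n+\alpha+1$, the worst contribution is
\begin{equation*}
|x|^{-N}\int_{\rho\le |\xi|\le 2} |\xi|^{\alpha+1-N}\,\mathrm{d}\xi \lesssim |x|^{-N}\cdot \rho^{\,n+\alpha+1-N} = |x|^{-(n+\alpha+1)},
\end{equation*}
and the shell and $\chi$-derivative contributions satisfy the same bound by a direct computation. Summing the pieces yields the claim.

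The only genuinely delicate step is the bookkeeping of the integrations by parts: one has to track that, even though $|\xi|^{\alpha}$ is not smooth at the origin, the finite number of $\xi$-derivatives never pushes the symbol out of $L^1$ on the high-frequency region $|\xi|\ge \rho$, and that the derivatives landing on the sharp-scale cutoff $\eta(\cdot/\rho)$ are exactly compensated by the shrinking support of its gradient. Once this is set up cleanly, the required exponent $n+\alpha+1$ appears precisely from matching $\rho^{\,n+\alpha+1-N}$ with $|x|^{-N}$.
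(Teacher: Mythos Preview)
Your argument is correct and is the standard dyadic splitting plus integration-by-parts proof for this type of kernel estimate; the paper does not give its own proof but refers to \cite[Claim~6]{kenig2011local}, where the same strategy is used in the one-dimensional setting. Your write-up is the natural $n$-dimensional version of that argument, so the approaches coincide.
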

\begin{proof}
The proof is similar to that of \cite[Claim 6]{kenig2011local}.
\end{proof}
We are now in a position to establish Lemma \ref{takamajaka}.
\begin{proof}[ Proof of the Lemma \ref{takamajaka}]
Since $\partial_{x_1}|\nabla_x|$ is a skew-symmetric operator, it is straightforward to verify that  
\begin{equation*}
\int_{\mathbb{R}^n} ( \partial_{x_1} |\nabla_x|^\alpha u ) \, u \varphi \, \mathrm{d}x = -\frac{1}{2} \int_{\mathbb{R}^n} u \, \left[\partial_{x_1} |\nabla_x|^\alpha; \varphi\right] u \, \mathrm{d}x.
\end{equation*}  
We use a specific construction to decouple the commutator to handle the term on the right-hand side, employing the standard \emph{high-low frequency} decomposition using the function $\chi \in C^\infty_0(\mathbb{R}^n)$ introduced above. More precisely, we introduce the operators  
\begin{equation*}
\left(p_1(x,D)f\right)(x) := \partial_{x_1}|\nabla_x|^{\alpha}\chi(D)f=\frac{1}{(2\pi)^n} \int_{\mathbb{R}^n} e^{i x \cdot \xi} (i \xi_1)|\xi|^\alpha  \chi(\xi) \widehat{f}(\xi) \, \mathrm{d}\xi
\end{equation*}  
and  
\begin{equation*}
\left(p_2(x,D)f\right)(x) := \partial_{x_1}|\nabla_x|^{\alpha}(1-\chi(D))= \frac{1}{(2\pi)^n} \int_{\mathbb{R}^n} e^{i x \cdot \xi} (i \xi_1) |\xi|^\alpha (1 - \chi(\xi)) \widehat{f}(\xi) \, \mathrm{d}\xi,
\end{equation*}  
whenever \( f \in \mathcal{S}(\mathbb{R}^n) \).  Thus, $\partial_{x_{1}}|\nabla_{x}|^{\alpha} =p_{1}(x,D)+p_{2}(x,D)$ yields
\begin{equation}\label{firstdecom}
\begin{split}
\int_{\mathbb{R}^{n}}u\partial_{x_{1}}|\nabla_{x}|^{\alpha}u \varphi \,\mathrm{d}x&=-\frac{1}{2}\int_{\mathbb{R}^{n}}u \left[p_{1}(D); \varphi\right]u \,\mathrm{d}x
 -\frac{1}{2}\int_{\mathbb{R}^{n}}u \left[p_{2}(D); \varphi\right]u \,\mathrm{d}x\\
 &=:\Pi_{1}+\Pi_{2}.
\end{split}
\end{equation}
Firstly, recalling the operator $\Omega(x)$ in Proposition \ref{kernel_decay}, we have
\begin{equation}\label{express}
\begin{split}
\left[P_{1}(D); \varphi \right]u(x)=\int_{\mathbb{R}^{n}}\Omega(x-y)\left(\varphi(y)-\varphi(x)\right) u(y)\,\mathrm{d}y.
\end{split}
\end{equation}
We first study the difference $\varphi(y)-\varphi(x)$.  We write
\begin{equation*}
\begin{aligned}
\varphi(y)-\varphi(x)=&\frac{\sigma\cdot(y-x)}{\langle \sigma\cdot x+\omega\rangle^{\gamma}\langle \sigma\cdot y+\omega\rangle^{\gamma}}+R(x,y)\\
=&\phi(x)\phi(y)\big(\sigma\cdot (y-x)\big)+R(x,y),\quad x,y\in\mathbb{R}^{n},    
\end{aligned}
\end{equation*}
where the remainder term $R(x,y)$ is given by 
\begin{equation*}
R(x,y):=\int_{\sigma\cdot x}^{\sigma\cdot y}\left(\frac{1}{\langle s+\omega\rangle^{2\gamma}}-\frac{1}{\langle \sigma\cdot x+\omega\rangle^{\gamma}\langle \sigma\cdot y+\omega\rangle^{\gamma}}\right)\,\mathrm{d}s.
\end{equation*}
Following similar arguments in \cite[Claim 6]{kenig2011local}, we deduce
\begin{itemize}
\item  If  $ |\sigma\cdot (x- y)|\leq \frac{1}{2}\left(\langle \sigma\cdot x+\omega\rangle+  \langle \sigma\cdot y+\omega\rangle\right),$ then 
\begin{equation}\label{decay1}
|R(x,y)|\lesssim   
 \frac{|\sigma\cdot (x-y)|^{2}}{\left(\langle \sigma\cdot x+\omega\rangle+\langle \sigma\cdot y+\omega\rangle\right)^{2\gamma+1}}.
 \end{equation}
\item If  $|\sigma\cdot ( x- y)|>\frac{1}{2}\left(\langle \sigma\cdot x+\omega\rangle +\langle \sigma\cdot y+\omega\rangle \right),$ then
  \begin{equation}\label{decay2}
 |R(x,y)|\lesssim   \left(1+\frac{|\sigma \cdot (x-y)|}{\langle \sigma\cdot x+\omega\rangle^{\gamma}\langle \sigma\cdot y+\omega \rangle^{\gamma}}\right).
 \end{equation}
\end{itemize}
We now return to $\Pi_1$ in \eqref{firstdecom}. With the above estimates at hand, we proceed as follows
\begin{equation*}
\begin{split}
\Pi_{1}&=-\frac{1}{2}\int_{\mathbb{R}^{n}}u(x)\left(\int_{\mathbb{R}^{n}}\Omega(x-y)\left(\varphi(y)-\varphi(x)\right) u(y)\,\mathrm{d}y\right)\,\mathrm{d}x\\
&=-\frac{1}{2}\int_{\mathbb{R}^{n}}\int_{\mathbb{R}^{n}}u(x)\phi(x)\mathcal{K}(x,y)\phi(y)u(y)\,\mathrm{d}y\mathrm{d}x\\
&\quad -\frac{1}{2}\int_{\mathbb{R}^{n}}\int_{\mathbb{R}^{n}}u(x) \Omega(x-y)R(x,y)u(y)\,\mathrm{d}y\mathrm{d}x\\
&=:\Pi_{1,1}+\Pi_{1,2},
\end{split}
\end{equation*}
where $\mathcal{K}(x,y):= \sigma\cdot(y-x)\Omega(x-y),\,\, x,y\in\mathbb{R}^{n}$. By using the definition of $\Omega$ (see Proposition \ref{kernel_decay}), it follows from integration by parts
 \begin{equation*}
 \begin{split}
 \mathcal{K}(x,y)=&-\frac{1}{(2\pi)^{n}}\sum_{j=1}^{n}\sigma_{j}\int_{\mathbb{R}^{n}}\partial_{\xi_{j}}\left(e^{i(x-y)\cdot \xi}\right)\xi_{1}|\xi|^{\alpha}\chi(\xi)\,\mathrm{d}\xi\\
 =&\frac{\sigma_{1}}{(2\pi)^{n}}\int_{\mathbb{R}^{n}}e^{i(x-y)\xi}|\xi|^{\alpha}\chi(\xi)\,\mathrm{d}\xi+ \alpha \sum_{j=1}^{n}\frac{\sigma_{j}}{(2\pi)^{n}}\int_{\mathbb{R}^{n}}e^{i(x-y)\cdot \xi}\xi_{1}\xi_{j}|\xi|^{\alpha-2}\chi(\xi)\,\mathrm{d}\xi\\
 &+\sum_{j=1}^{n}\frac{\sigma_{j}}{(2\pi)^{n}}\int_{\mathbb{R}^{n}}e^{i(x-y)\cdot \xi}\xi_{1}|\xi|^{\alpha}(\partial_{\xi_{j}}\chi)(\xi)\,\mathrm{d}\xi\\
 =:& \mathcal{K}_1(x-y)+\mathcal{K}_2(x-y)+\mathcal{K}_3(x-y).
 \end{split}
 \end{equation*}
Consequently, we write
 \begin{equation*}
 \begin{split}
 \Pi_{1,1} &=-\frac{1}{2}\int_{\mathbb{R}^{n}}u(x)\phi(x) (\mathcal{K}_{1} *(u\phi))(x)\,\mathrm{d}x-\frac{1}{2}\int_{\mathbb{R}^{n}}u(x)\phi(x)(\mathcal{K}_{2} *(u\phi))(x)\,\mathrm{d}x\\
 &\quad -\frac{1}{2}\int_{\mathbb{R}^{n}}u(x)\phi(x) (\mathcal{K}_{3} *(u\phi))(x)\,\mathrm{d}x\\
&=:\Pi_{1,1,1}+\Pi_{1,1,2}+\Pi_{1,1,3}.
 \end{split}
 \end{equation*}
We rewrite the first term on the left-hand side above as follows  
\begin{equation}\label{part1smoothing}
\begin{split}
\Pi_{1,1,1}&=-\frac{\sigma_{1}}{2}\int_{\mathbb{R}^{n}}u\phi |\nabla_{x}|^{\alpha}\chi(D)(u\phi)\,\mathrm{d}x=-\frac{\sigma_{1}}{2}\int_{\mathbb{R}^{n}}|\nabla_{x}|^{\frac{\alpha}{2}}(u\phi) |\nabla_{x}|^{\frac{\alpha}{2}}\chi(D)(u\phi)\,\mathrm{d}x.
\end{split}
\end{equation}  
This expression will be estimated later along with $\Pi_{1,1,2}$, which we write as
\begin{equation}\label{lfre}
\begin{split}
\Pi_{1,1,2}&=-\sum_{j=1}^{n}\frac{\alpha\sigma_{j}}{2}\int_{\mathbb{R}^{n}}u\phi |\nabla_{x}|^{\alpha-2}\chi(D)D_{1}D_{j}(u\phi)\,\mathrm{d}x.
\end{split}
\end{equation}  
 Now, for $\Pi_{1,1,3}$, we observe
 \begin{equation*}
 \begin{split}
 \Pi_{1,1,3}&=\sum_{j=1}^{n}\frac{\sigma_{j}}{2}\int_{\mathbb{R}^{n}}u\phi \partial_{x_1}|\nabla_{x}|^{\alpha}\chi_{j}(D)(u\phi)\,\mathrm{d}x,
 \end{split}
 \end{equation*}
 where, for simplicity in the notation, we have defined $\chi_{j}(\xi)=(i\partial_{\xi_{j}}\chi)(\xi),\,\, \xi \in\mathbb{R}^{n}$. Since $\partial_{x_1}|\nabla_{x}|^{\alpha}\chi_{j}(D)\in OPS^{0,0}_{\sigma,\omega}$, using the continuity of pseudo-differential operators in Theorem \ref{continuity}, we obtain 
 \begin{equation}\label{ine1.1}
 \begin{split}
 |\Pi_{1,1,3}|&\lesssim \left\|\frac{u}{\langle \sigma \cdot x+\omega\rangle^{\gamma}}\right\|_{L^{2}}^{2}\sim \int_{\mathbb{R}^{n}}u^{2}\phi^{2}\,\mathrm{d}x.
 \end{split}
 \end{equation}
  Next, we focus on the term $\Pi_{1,2}$. To this end, we define 
 \begin{equation}\label{oper1}
 (\mathcal{T}u)(x):=\int_{\mathbb{R}^{n}}\Omega(x-y)R(x,y)u(y)\,\mathrm{d}y,\quad u\in\mathcal{S}(\mathbb{R}^{n}).
 \end{equation}
We express $\Pi_{1,2}$ using the operator $\mathcal{T}$ as follows
 \begin{equation*}
 \begin{split}
 \Pi_{1,2}
 &=-\frac{1}{2}\int_{\mathbb{R}^{n}}\left(\frac{u}{\langle \sigma \cdot x+\omega\rangle^{\gamma}}\right)\langle \sigma \cdot x+\omega\rangle^{\gamma}(\mathcal{T}u)(x)\,\mathrm{d}x\\
 &=-\frac{1}{2}\int_{\mathbb{R}^{n}}\widetilde{u}\langle \sigma \cdot x+\omega\rangle^{\gamma}\left(\mathcal{T}\left(\langle \sigma\cdot y +\omega\rangle^{\gamma}\, \widetilde{u} \right)\right)(x)\,\mathrm{d}x,
 \end{split}
 \end{equation*}
 where $\widetilde{u}(x):=\frac{u(x)}{\langle \sigma \cdot x+\omega\rangle^{\gamma}},\quad x\in\mathbb{R}^{n}$. Thus, the problem of estimating $\Pi_{2,1}$ reduces to show that the operator defined as
\begin{equation*}
 (\mathcal{T}_{1}f)(x) := \langle \sigma \cdot x + \omega \rangle^{\gamma} \mathcal{T}\left(\langle \sigma \cdot y + \omega \rangle^{\gamma} f\right)(x), \quad f \in \mathcal{S}(\mathbb{R}^{n}),
\end{equation*}
 is $L^{2}-L^{2}$ continuous. Notice that for $f\in\mathcal{S}(\mathbb{R}^{n}),$
 \begin{equation*}
 (\mathcal{T}_{1}f)(x)=\int_{\mathbb{R}^{n}}\langle \sigma \cdot x+\omega\rangle^{\gamma}\Omega(x-y)R(x,y)\langle \sigma \cdot y+\omega\rangle^{\gamma}f(y)\,\mathrm{d}y,
 \end{equation*}
 that is, 
$\mathcal{T}_{1}$  has kernel $\mathcal{Q}$, which is given by  
 \begin{equation*}
 \begin{split}
 \mathcal{Q}(x,y)&:= \langle \sigma \cdot x+\omega\rangle^{\gamma}\Omega(x-y)R(x,y)\langle \sigma \cdot y+\omega\rangle^{\gamma}=\mathcal{Q}_{1}(x,y)+\mathcal{Q}_{2}(x,y),
 \end{split}
 \end{equation*}
 where 
 \begin{equation*}
 \mathcal{Q}_{1}:=\mathcal{Q}|_{\Sigma_{1}}\quad \mbox{and}\quad \mathcal{Q}_{2}:=\mathcal{Q}|_{\Sigma_{2}},
 \end{equation*}
 being $\Sigma_{1}$ and $\Sigma_{2}$ the  following regions:
 \begin{equation*}
 \Sigma_{1}:=\left\{(x,y)\in\mathbb{R}^{n}\times \mathbb{R}^n\,:\,  |\sigma\cdot (x- y)|\leq \frac{1}{2}\left(\langle \sigma\cdot x+\omega\rangle+  \langle \sigma\cdot y+\omega\rangle\right)\right\},
 \end{equation*}
 and 
  \begin{equation*}
 \Sigma_{2}:=\left\{(x,y)\in\mathbb{R}^{n}\times \mathbb{R}^n\,:\,  |\sigma\cdot (x- y)|>\frac{1}{2}\left(\langle \sigma\cdot x+\omega\rangle+  \langle \sigma\cdot y+\omega\rangle\right)\right\}.
 \end{equation*}
 In virtue of \eqref{decay1}, it is clear that   
 \begin{equation}\label{one1}
 \begin{split}
 \int_{\mathbb{R}^{n}}|\mathcal{Q}_{1}(x,y)|\,\mathrm{d}x &=\int_{\Sigma_{1}}\langle \sigma \cdot x+\omega\rangle^{\gamma}|\Omega(x-y)R(x,y)|\langle \sigma \cdot y+\omega\rangle^{\gamma}\,\mathrm{d}x\\
 &\leq  \int_{\Sigma_{1}}\frac{|\Omega(x-y)||\sigma\cdot(x-y)|^{2}\langle \sigma\cdot x+\omega\rangle^{\gamma} \langle \sigma\cdot y+\omega\rangle^{\gamma}}{\left(\langle \sigma\cdot x+\omega\rangle+\langle \sigma\cdot y+\omega\rangle\right)^{2\gamma+1}}\,\mathrm{d}x\\
 &\lesssim_{\sigma} \int_{\Sigma_{1}}\frac{|\Omega(x-y)||\sigma\cdot(x-y)|^{2}}{\langle \sigma\cdot x+\omega\rangle+\langle \sigma\cdot y+\omega\rangle}\,\mathrm{d}x.
 \end{split}
 \end{equation}
 By Proposition \ref{kernel_decay}, it follows that
  \begin{equation}\label{one2}
  \int_{\Sigma_{1}}\frac{|\Omega(x-y)||\sigma\cdot (x-y)|^{2}}{\langle \sigma\cdot x+\omega\rangle+\langle \sigma\cdot y+\omega\rangle}\,\mathrm{d}x
  \lesssim_{\alpha,\sigma,n}  \int_{\mathbb{R}^{n}}\frac{\mathrm{d}x}{\langle x-y\rangle^{n+\alpha}}
\sim_{\alpha,n} 1.
 \end{equation}
We combine \eqref{one1} and \eqref{one2} to obtain
 \begin{equation*}
\sup_{y\in\mathbb{R}^{n}}  \int_{\mathbb{R}^{n}}|\mathcal{Q}_{1}(x,y)|\,\mathrm{d}x\lesssim_{\alpha,n}1.
 \end{equation*}
On the other hand, since $\sigma\neq 0$ on $\Sigma_2$, we have that
\begin{equation*}
\begin{aligned}
\langle x-y \rangle \gtrsim \langle \sigma\cdot x+\omega \rangle+\langle \sigma\cdot y+\omega \rangle.  
\end{aligned}    
\end{equation*}
This inequality, \eqref{decay2} and the decay of $\Omega$ establish
\begin{equation*}
\begin{split}
 \int_{\mathbb{R}^n}|\mathcal{Q}_{2}(x,y)|\,\mathrm{d}x
&\lesssim\int_{\Sigma_{2}}\frac{\langle \sigma \cdot x+\omega\rangle^{\gamma}\langle \sigma \cdot y+\omega\rangle^{\gamma}}{\langle x-y \rangle^{n+\alpha+1}}\,\mathrm{d}x +\int_{\Sigma_{2}}\frac{|\sigma \cdot (x-y)|}{\langle x-y\rangle^{n+\alpha+1}}\,\mathrm{d}x\\
&\lesssim \langle \sigma \cdot y+\omega\rangle^{\gamma}\int_{|x-y|\gtrsim \langle \sigma\cdot y+\omega\rangle}\frac{1}{\langle x-y \rangle^{n+\alpha+1-\gamma}}\,\mathrm{d}x +\int_{\mathbb{R}^n}\frac{1}{\langle x-y\rangle^{n+\alpha}}\,\mathrm{d}x\\
&\lesssim  1+ \frac{\langle \sigma\cdot y+\omega\rangle^{\gamma}}{\langle \sigma\cdot y+\omega\rangle^{\alpha+1-\gamma}}\lesssim  1,
\end{split}
\end{equation*}
which is bounded provided that $\gamma \leq \frac{\alpha+1}{2}$. 

We conclude that $\sup_{y\in\mathbb{R}^{n}}  \int_{\mathbb{R}^{n}}|\mathcal{Q}_{2}(x,y)|\,\mathrm{d}x\lesssim_{\alpha,n}1$, which using the previous estimation of of the integral of $Q_1(x,y)$, and that $\Sigma_1$ and $\Sigma_2$ are disjoint yield
\begin{equation*}  
	\sup_{y\in\mathbb{R}^{n}}\int_{\mathbb{R}^{n}}|\mathcal{Q}(x,y)|\,\mathrm{d}x\lesssim 1.  
\end{equation*}  
By symmetry of the preceding argument, it  follows that 
\begin{equation*}  
	\sup_{x\in\mathbb{R}^{n}}\int_{\mathbb{R}^{n}}|\mathcal{Q}(x,y)|\,\mathrm{d}y\lesssim 1.  
\end{equation*}  
By Schur's test (see \cite{MR517709}, $\S 5.$), we establish the desired continuity of the operator $\mathcal{T}_{1}$ in $L^2$, which in turn, implies the required bound for $\Pi_{1,2}$. More precisely, we have deduced
\begin{equation*}
	|\Pi_{1,2}|\lesssim \int_{\mathbb{R}^{n}}u^{2}\phi^{2}\,\mathrm{d}x.
	\end{equation*} 

Next, we focus on the term $\Pi_{2}$ in \eqref{firstdecom}.  
Observing that $p_{2}(D) \in OPS^{\alpha+1,0}_{\sigma,\omega},$ we immediately obtain from Proposition \ref{PO1} that
\begin{equation*}
\begin{split}
\left[p_{2}(D); \varphi\right]=c(x,D)\in OPS^{\alpha,0}_{\sigma,\omega} ,
\end{split}
\end{equation*}
where the symbol $c(x,\xi)$ satisfies
\begin{equation*}
\begin{split}
c(x,\xi)&=-i\sum_{j=1}^n(\partial_{\xi_j}p_{2})(\xi)(\partial_{x_j} \varphi)(x) - \frac{1}{2}\sum_{j,m=1}^n(\partial_{\xi_j}\partial_{\xi_m}p_{2})(\xi)(\partial_{x_j}\partial_{x_m}\varphi)(x) + c_{\alpha-2}(x,\xi)\\
&=:c_{\alpha}(x,\xi)+c_{\alpha-1}(x,\xi)+c_{\alpha-2}(x,\xi),
\end{split}
\end{equation*}
recall that $p_2(\xi)=i\xi_1|\xi|^{\alpha}(1-\chi(\xi))$. We have that
\begin{equation*}
\begin{aligned}
\Pi_{2}=&-\frac{1}{2}\int_{\mathbb{R}^n} u \big(c_{\alpha}(x,D)+c_{\alpha-1}(x,D) \big) u \,\mathrm{d}x-\frac{1}{2}\int_{\mathbb{R}^n} u\, c_{\alpha-2}(x,D) u \, \mathrm{d}x.
\end{aligned}    
\end{equation*}
Since $c_{\alpha-2}(x,\xi)\in S^{\alpha-2,-2\gamma-2}_{\sigma,\omega}\subset S^{0,-2\gamma}_{\sigma,\omega}$, provided that $\alpha\in [1,2)$, Theorem \ref{continuity} implies
\begin{equation*}
\begin{split}
\left|\int_{\mathbb{R}^{n}}uc_{\alpha-2}(x,D)u\,\mathrm{d}x\right|&=\left|\int_{\mathbb{R}^{n}}\langle \sigma\cdot x+\omega\rangle^{-\gamma}u \langle \sigma\cdot x+\omega\rangle^{\gamma}c_{\alpha-2}(x,D)u\,\mathrm{d}x\right|\\
&\lesssim \left\|\frac{u}{\langle \sigma\cdot x+\omega\rangle^{\gamma}}\right\|_{L^{2}}^{2}\lesssim \int_{\mathbb{R}^{n}}u^{2}\phi^{2}\,\mathrm{d}x.
\end{split}
\end{equation*}
On the other hand, let us further decompose the operators $c_{\alpha}(x,D)$ and $c_{\alpha-1}(x,D)$. We have
\begin{equation}\label{fc1}
	\begin{split}
		c_{\alpha}(x,\xi) =\ & |\xi|^{\alpha} (1 - \chi(\xi))\, \partial_{x_{1}}\varphi(x) \\
		& + \alpha \sum_{j=1}^n \xi_1 \xi_j |\xi|^{\alpha - 2} (1 - \chi(\xi))\, \partial_{x_j}\varphi(x) \\
		& - \sum_{j=1}^n \xi_1 |\xi|^{\alpha} \left(\partial_{\xi_j} \chi(\xi)\right)\, \partial_{x_j}\varphi(x).
	\end{split}
\end{equation}
and 
\begin{equation*}
\begin{aligned}
c_{\alpha-1}(x,\xi)=&-i\alpha \sum_{j=1}^n \xi_j |\xi|^{\alpha-2}(1-\chi(\xi))(\partial_{x_j}\partial_{x_1}\varphi)(x)+\frac{1}{2}\xi_1 |\xi|^{\alpha-2}(1-\chi(\xi))(\partial_{x_j}^2\varphi)(x)\\
&-\frac{i}{2}\alpha(\alpha-2)\sum_{j,m=1}^n \xi_1 \xi_j \xi_m |\xi|^{\alpha-4} (1-\chi(\xi))(\partial_{x_j}\partial_{x_m}\varphi)(x)\\
&+\frac{i}{2}\sum_{j=1}^n|\xi|^{\alpha} (\partial_{\xi_j}\chi)(\xi)(\partial_{x_j}\partial_{x_1}\varphi)(x)+\frac{i \alpha}{2}\sum_{j,m=1}^n \xi_1 \xi_m |\xi|^{\alpha-2}(\partial_{\xi_j}\chi)(\xi)(\partial_{x_j}\partial_{x_m}\varphi)(x)\\
&+\frac{i \alpha}{2}\sum_{j,m=1}^n \partial_{\xi_j}\big(\xi_1|\xi|^{\alpha}\partial_{\xi_m}\chi)(\xi)\big)(\partial_{x_j}\partial_{x_m}\varphi)(x).
\end{aligned}    
\end{equation*}
Consequently, using that $\partial_{x_j}\varphi=\sigma_j\phi^2
$, and that   $\partial_{x_j}\partial_{x_m}\varphi(x)=-2\gamma\sigma_j \sigma_m\phi^2(x)\, \left(\frac{\sigma\cdot x+\omega}{\langle \sigma\cdot x+\omega \rangle^2}\right)$, we deduce
\begin{equation*}
\begin{aligned}
c_{\alpha}(x,D)+&c_{\alpha-1}(x,D)\\
=&\sigma_1 \phi^2 |\nabla_x|^{\alpha}(1-\chi(D))  +\alpha\sum_{j=1}^n \sigma_j \phi^2|\nabla_x|^{\alpha-2}(1-\chi(D))D_{1}D_j \\
&+2\gamma \alpha\sum_{j=1}^n \sigma_1\sigma_j \phi^2  \left(\frac{\sigma\cdot x+\omega}{\langle \sigma\cdot x+\omega \rangle^2}\right)\partial_{x_j}|\nabla_x|^{\alpha-2}(1-\chi(D))\\
&+\gamma \alpha\sum_{j=1}^n \sigma_j^2 \phi^2  \left(\frac{\sigma\cdot x+\omega}{\langle \sigma\cdot x+\omega \rangle^2}\right)\partial_{x_1}|\nabla_x|^{\alpha-2}(1-\chi(D))\\
&-\gamma\alpha(\alpha-2)\sum_{j,m=1}^n \sigma_j \sigma_m  \phi^2  \left(\frac{\sigma\cdot x+\omega}{\langle \sigma\cdot x+\omega \rangle^2}\right) \partial_{x_1}\partial_{x_j}\partial_{x_m}|\nabla_x|^{\alpha-4}(1-\chi(D))\\
&+\phi^2 \left(\frac{\sigma\cdot x+\omega}{\langle \sigma\cdot x+\omega \rangle^2}\right)\widetilde{c}_{\alpha}(D)\\
=:&\sum_{j=1}^5 c_{\alpha,j}(x,D)+\phi^2 \left(\frac{\sigma\cdot x+\omega}{\langle \sigma\cdot x+\omega \rangle^2}\right)\widetilde{c}_{\alpha}(D),
\end{aligned}
\end{equation*}
where the operator $\widetilde{c}_{\alpha}(D)$ has associated symbol
\begin{equation*}
\begin{aligned}
\widetilde{c}_{\alpha}(\xi)=&\frac{i}{2}\sum_{j=1}^n \sigma_1 \sigma_j|\xi|^{\alpha} (\partial_{\xi_j}\chi)(\xi)+\frac{i \alpha}{2}\sum_{j,m=1}^n \sigma_j \sigma_m \xi_1 \xi_m |\xi|^{\alpha-2}(\partial_{\xi_j}\chi)(\xi)\\
&+\frac{i \alpha}{2}\sum_{j,m=1}^n \sigma_j \sigma_m \partial_{\xi_j}\big(\xi_1|\xi|^{\alpha}\partial_{\xi_m}\chi)(\xi)\big). 
\end{aligned}    
\end{equation*}
We remark that since the derivatives of $\chi(\xi)$
 are supported in the region $1\leq |\xi|\leq 2$, we have that $\widetilde{c}_{\alpha}(D)\in OPS_{\sigma,\omega}^{0,0}$. It follows
\begin{equation*}
\begin{aligned}
-\frac{1}{2}\int_{\mathbb{R}^n} & u \big(c_{\alpha}(x,D)+c_{\alpha-1}(x,D) \big) u \,\mathrm{d}x\\
=&-\frac{1}{2}\sum_{j=1}^5\int_{\mathbb{R}^n} u c_{\alpha,j}(x,D) u\, \mathrm{d}x-\frac{1}{2} \int_{\mathbb{R}^n} u \phi^2 \left(\frac{\sigma\cdot x+\omega}{\langle \sigma\cdot x+\omega \rangle^2}\right)\widetilde{c}_{\alpha}(D) u \, \mathrm{d}x\\
=:&\sum_{j=1}^5\Pi_{2,j}-\frac{1}{2} \int_{\mathbb{R}^n} u \phi^2 \left(\frac{\sigma\cdot x+\omega}{\langle \sigma\cdot x+\omega \rangle^2}\right)\widetilde{c}_{\alpha}(D) u \, \mathrm{d}x.
\end{aligned}    
\end{equation*}
Using that
\begin{equation*}
	\langle \sigma \cdot x + \omega \rangle^{\gamma - 1} \phi^2(x) \, \widetilde{c}_{\alpha}(D) \big( \langle \sigma \cdot x + \omega \rangle^{\gamma} \, \cdot \big) \in OPS_{\sigma,\omega}^{0,0},
\end{equation*}
the continuity of the pseudo-differential operators implies
\begin{equation}\label{ctildeestimate}
	\begin{aligned}
		& \left| \int_{\mathbb{R}^n} u \, \phi^2 \left( \frac{\sigma \cdot x + \omega}{\langle \sigma \cdot x + \omega \rangle^2} \right) \widetilde{c}_{\alpha}(D) u \, \mathrm{d}x \right| \\
		& \quad \lesssim \left\| \frac{u}{\langle \sigma \cdot x + \omega \rangle^{\gamma}} \right\|_{L^2}
		\left\| \langle \sigma \cdot x + \omega \rangle^{\gamma - 1} \phi^2(x) \, \widetilde{c}_{\alpha}(D) \left( \langle \sigma \cdot x + \omega \rangle^{\gamma} \frac{u}{\langle \sigma \cdot x + \omega \rangle^{\gamma}} \right) \right\|_{L^2} \\
		& \quad \lesssim \int_{\mathbb{R}^n} u^2 \phi^2 \, \mathrm{d}x,
	\end{aligned}
\end{equation}
where the above implicit constant depends on
$\sum_{j=1}^n |\sigma_1 \sigma_j| + \sum_{j,m=1}^n |\sigma_j \sigma_m|$.

The estimates for the terms $\Pi_{2,j}$, $j=3,4,5$, all follow from similar ideas. To show the main considerations, let us study $\Pi_{2,5}$. Using that $\partial_{x_1}\partial_{x_j}\partial_{x_m}|\nabla_x|^{\alpha-4}(1-\chi(D))$ is a skew-symmetric operator, we find
\begin{equation*}
 \begin{aligned}
\Pi_{2,5}=\frac{\gamma \alpha(\alpha-2)}{4}\sum_{j,m=1}^n \sigma_j \sigma_m \int_{\mathbb{R}^n} u \, \left[\phi^2\left(\frac{\sigma\cdot x+\omega}{\langle \sigma\cdot x+\omega\rangle^2 }\right);\partial_{x_1}\partial_{x_j}\partial_{x_m}|\nabla_x|^{\alpha-4}(1-\chi(D))\right] u.      
 \end{aligned}   
\end{equation*}
The properties of the commutator of pseudo-differential operators establish
\begin{equation*}
\begin{aligned}
\langle \sigma\cdot x+\omega\rangle^{\gamma}\left[\phi^2\left(\frac{\sigma\cdot x+\omega}{\langle \sigma\cdot x+\omega\rangle^2 }\right);\partial_{x_1}\partial_{x_j}\partial_{x_m}|\nabla_x|^{\alpha-4}(1-\chi(D))\right] \langle \sigma\cdot x+\omega\rangle^{\gamma} \in OPS_{\sigma,\omega}^{\alpha-2,-2}.  
\end{aligned}
\end{equation*}
Applying that $S_{\sigma,\omega}^{\alpha-2,-2}\subset S_{\sigma,\omega}^{0,0}$ provided that $\alpha<2$, we deduce from the continuity of pseudo-differential operators that
\begin{equation*}
 \begin{aligned}
\left|\Pi_{2,5}\right|\lesssim |\gamma||\alpha(\alpha-2)|\sum_{j,m=1}^n |\sigma_j| |\sigma_m| \int_{\mathbb{R}^n} u^2\phi^2 \, \mathrm{d}x. 
 \end{aligned}   
\end{equation*}
As mentioned above, the same argument applies to the terms involving the operators $c_{\alpha,3}(x, D)$ and $c_{\alpha,4}(x, D)$. To summarize, we conclude
\begin{equation*}
 \begin{aligned}
\left|\Pi_{2,3}\right|+\left|\Pi_{2,4}\right|\lesssim |\gamma||\alpha|\sum_{j=1}^n \big(|\sigma_1||\sigma_j|+|\sigma_j|^2\big)\int_{\mathbb{R}^n} u^2\phi^2 \, \mathrm{d}x. 
 \end{aligned}   
\end{equation*}
We now proceed to estimate $\Pi_{2,1}$, which we write
 \begin{equation}\label{highfredecom}
 \begin{split}
 \Pi_{2,1}&=\frac{\sigma_{1}}{2}\int_{\mathbb{R}^{n}}u \phi \left[|\nabla_{x}|^{\alpha}(1-\chi(D)); \phi\right] u\,\mathrm{d}x-\frac{\sigma_{1}}{2}\int_{\mathbb{R}^{n}}|\nabla_{x}|^{\frac{\alpha}{2}}(u\phi) |\nabla_{x}|^{\frac{\alpha}{2}}(1-\chi(D))(u\phi)\,\mathrm{d}x\\
 &=:\Pi_{2,1,1}+\Pi_{2,1,2}.
 \end{split}
 \end{equation}
The term $\Pi_{2,1,2}$ will be combined with $\Pi_{1,1,1}$ in \eqref{part1smoothing} later. We address now the term $\Pi_{2,1,1}$. By applying pseudo-differential calculus we have
\begin{equation}\label{decompoperA}
\begin{split}
\left[|\nabla_{x}|^{\alpha}(1-\chi(D)); \phi\right] = q_{\alpha-1}(x,D) + q_{\alpha-2}(x,D),
\end{split}
\end{equation}
where
\begin{equation*}
\begin{split}
q_{\alpha-1}(x,\xi) &= -\alpha \gamma \phi \sum_{j=1}^{n} \sigma_{j} \left( \frac{\sigma \cdot x + \omega}{\langle \sigma \cdot x + \omega \rangle^2} \right) |\xi|^{\alpha-2} (i\xi_{j}) (1-\chi(\xi)) \\
&\quad + \gamma \phi \sum_{j=1}^{n} \sigma_{j} \left( \frac{\sigma \cdot x + \omega}{\langle \sigma \cdot x + \omega \rangle^2} \right) |\xi|^{\alpha}(i\partial_{\xi_j}\chi )(\xi),
\end{split}
\end{equation*}
and $q_{\alpha-2}(x,D) \in OPS^{\alpha-2,-2\gamma-2}_{\sigma,\omega} \subset OPS^{0,-2\gamma}_{\sigma,\omega}$. Thus, after replacing the identity above into $\Pi_{2,1,1}$, we get 
 \begin{equation}\label{decompoperAC}
 \begin{split}
 \Pi_{2,1,1}&=-
\frac{\alpha \sigma_{1} \gamma}{2}\sum_{j=1}^{n}\sigma_{j}\int_{\mathbb{R}^{n}}u\phi^{2} \left(\frac{\sigma\cdot x+\omega}{\langle \sigma \cdot x+\omega\rangle^2}\right)\partial_{x_{j}}|\nabla_{x}|^{\alpha-2}(1-\chi(D))u\,\mathrm{d}x\\
&\quad +\frac{\sigma_{1}\gamma}{2}\sum_{j=1}^{n}\sigma_{j}\int_{\mathbb{R}^{n}}u\phi^{2} \left(\frac{\sigma\cdot x+\omega}{\langle \sigma \cdot x+\omega\rangle^2}\right)|\nabla_{x}|^{\alpha}\chi_{j}(D)u\,\mathrm{d}x+\int_{\mathbb{R}^n} u \phi q_{\alpha-2}(x,D) u\,  \mathrm{d}x \\
&=:\Pi_{2,1,1,1}+\Pi_{2,1,1,2}+\Pi_{2,1,1,3},
 \end{split}
 \end{equation}
 where we recall the notation $\chi_{j}(\xi)=(i\partial_{\xi_{j}}\chi)(\xi)$, $\xi \in\mathbb{R}^{n}$. Given that $|\nabla_{x}|^{\alpha}\chi_{j}(D)\in OPS_{\sigma,\omega}^{0,0}$, we can argue as in the estimate for \eqref{ctildeestimate} above to deduce
 \begin{equation*}
  \begin{aligned}
|\Pi_{2,1,1,2}|+|\Pi_{2,1,1,3}|\lesssim \int_{\mathbb{R}^n} u^2 \phi^2 \,  \mathrm{d}x,
\end{aligned}   
\end{equation*}
where the implicit constant depends on $|\alpha|$, $\gamma$ and $\sigma$. Since $\partial_{x_{j}} |\nabla_{x}|^{\alpha-2}(1-\chi(D))$  is a skew-symmetric operator for $1 \leq j \leq n$, it is straightforward to verify that
\begin{equation*}
\begin{split}
\Pi_{2,1,1,1} &= -\frac{\gamma \sigma_{1} \alpha}{4} \sum_{j=1}^{n} \sigma_{j} \int_{\mathbb{R}^{n}} u \left[ \partial_{x_{j}}|\nabla_{x}|^{\alpha-2}(1-\chi(D)) ; \left( \frac{\sigma \cdot x + \omega}{\langle \sigma \cdot x + \omega \rangle^2} \right) \phi^{2} \right] u \, \mathrm{d}x.
\end{split}
\end{equation*}
Nevertheless, we have
\begin{equation*}
\begin{split}
\left[ \partial_{x_{j}}|\nabla_{x}|^{\alpha-2}(1-\chi(D)) ; \left( \frac{\sigma \cdot x + \omega}{\langle \sigma \cdot x + \omega \rangle^2} \right) \phi^{2} \right] \in OPS^{0,-2\gamma}_{\sigma,\omega},
\end{split}
\end{equation*}
which, by continuity (see Theorem \ref{continuity}), implies that
\begin{equation*}
|\Pi_{2,1,1,1}| \lesssim \left( \frac{\alpha |\sigma_{1}| \gamma}{2} \sum_{j=1}^{n} |\sigma_{j}| \right) \int_{\mathbb{R}^{n}} u^{2} \phi^{2} \, \mathrm{d}x.
\end{equation*}
For $\Pi_{2,2} $, we write it as follows:
\begin{equation}\label{finalconmE}
\begin{split}
\Pi_{2,2} &= -\sum_{j=1}^{n} \frac{\alpha \sigma_{j}}{2} \int_{\mathbb{R}^{n}} u \phi |\nabla_{x}|^{\alpha-2}(1-\chi(D)) D_{1} D_{j} (u \phi) \, \mathrm{d}x \\
&\quad + \sum_{j=1}^{n} \frac{\alpha \sigma_{j}}{2} \int_{\mathbb{R}^{n}} u \phi \left[ |\nabla_{x}|^{\alpha-2}(1-\chi(D)) D_{1} D_{j}; \phi \right] u \, \mathrm{d}x \\
&= \Pi_{2,2,1} + \Pi_{2,2,2}.
\end{split}
\end{equation}
The term $\Pi_{2,2,1}$ is the high-frequency part of the term in \eqref{lfre}, which, together with $\Pi_{2,1,2}$ will be estimated later.

Next, we need to handle $\Pi_{2,2,2} $, from which we observe that Poisson's Bracket yields the main symbol of the commutator expansion:
\begin{equation*}
\begin{split}
&\left[ |\nabla_{x}|^{\alpha-2}(1-\chi(D)) D_{1} D_{j}; \phi \right] \\
&= \gamma(\alpha-2) \sum_{m=1}^{n} \sigma_{m} \left( \frac{\sigma \cdot x + \omega}{\langle \sigma \cdot x + \omega \rangle^2} \right) \phi(x) |\nabla_{x}|^{\alpha-4} \partial_{x_m}D_{1} D_{j} (1-\chi(D)) \\
&\quad - \gamma \sum_{m=1}^{n} \sigma_{m}  \phi(x) \left( \frac{\sigma \cdot x + \omega}{\langle \sigma \cdot x + \omega \rangle^2} \right) |\nabla_{x}|^{\alpha-2} \chi_{m}(D) \partial_{x_1} D_{j} \\
&\quad + \gamma \sigma_{1} \phi(x) \left( \frac{\sigma \cdot x + \omega}{\langle \sigma \cdot x + \omega \rangle^2} \right)  |\nabla_{x}|^{\alpha-2} (1-\chi(D)) \partial_{x_{j}} \\
&\quad + \gamma \sigma_{j} \phi(x) \left( \frac{\sigma \cdot x + \omega}{\langle \sigma \cdot x + \omega \rangle^2} \right)  |\nabla_{x}|^{\alpha-2} (1-\chi(D)) \partial_{x_{1}} + s_{\alpha-2,j}(x,D),
\end{split}
\end{equation*}
where $s_{\alpha-2,}(x,D)\in OPS_{\sigma,\omega}^{0,-2\gamma}$. Thus, we write
\begin{equation*}
\begin{split}
\Pi_{2,2,2} =& \frac{\gamma \alpha(\alpha-2)}{2}\sum_{j,m=1}^{n} \sigma_{j} \sigma_{m} \int_{\mathbb{R}^{n}} u \phi^2 \left( \frac{\sigma \cdot x + \omega}{\langle \sigma \cdot x + \omega \rangle^2} \right) |\nabla_{x}|^{\alpha-4} \partial_{x_{m}}D_{1} D_{j}  (1-\chi(D)) u \, \mathrm{d}x \\
&- \frac{\gamma \alpha}{2} \sum_{j,m=1}^{n} \sigma_{j} \sigma_{m} \int_{\mathbb{R}^{n}} u \phi^2 \left( \frac{\sigma \cdot x + \omega}{\langle \sigma \cdot x + \omega \rangle^2} \right) |\nabla_{x}|^{\alpha-2} \chi_{m}(D) \partial_{x_1} D_{j} u \, \mathrm{d}x \\
&+ \frac{\gamma \sigma_{1} \alpha}{2} \sum_{j=1}^{n} \sigma_{j} \int_{\mathbb{R}^{n}} u \phi^2 \left( \frac{\sigma \cdot x + \omega}{\langle \sigma \cdot x + \omega \rangle^2} \right) |\nabla_{x}|^{\alpha-2} (1-\chi(D)) \partial_{x_{j}} u \, \mathrm{d}x \\
& + \frac{\gamma \alpha}{2} \sum_{j=1}^{n} \sigma_{j}^{2} \int_{\mathbb{R}^{n}} u \phi^2 \left( \frac{\sigma \cdot x + \omega}{\langle \sigma \cdot x + \omega \rangle^2} \right)  |\nabla_{x}|^{\alpha-2} (1-\chi(D)) \partial_{x_{1}} u \, \mathrm{d}x\\
& +\frac{\alpha}{2}\sum_{j=1}^n \sigma_j \int_{\mathbb{R}^n} u\phi\, s_{\alpha-2.j}(x,D) u\, \mathrm{d}x.
\end{split}
\end{equation*}
At this point, the estimation of $\Pi_{2,2,2}$ follows similarly to the arguments in \eqref{decompoperA} and \eqref{decompoperAC}, where the antisymmetry of certain operators is used to produce commutators that reduce the regularity and decay requirements on the symbols involved. To avoid repetition, we summarize the result as follows:
\begin{equation*}
	|\Pi_{2,2,2}| \lesssim \int_{\mathbb{R}^n} u^2 \phi^2 \, \mathrm{d}x,
\end{equation*}
where the implicit constant depends on $\sigma$ and $\gamma$.

Combining $\Pi_{1,1,1}$ in \eqref{part1smoothing} with $\Pi_{2,1,2}$ in \eqref{highfredecom}, and $\Pi_{2,2,1}$ in \eqref{finalconmE} with $\Pi_{1,1,2}$ in \eqref{lfre}, we get  
 \begin{equation*}
 \begin{split}
 -\frac{1}{2}&\int_{\mathbb{R}^{n}}u \left[\partial_{x_{1}}|\nabla_{x}|^{\alpha}; \varphi\right]u \,\mathrm{d}x\\
 =&-\frac{\sigma_{1}}{2}\int_{\mathbb{R}^{n}} u\phi |\nabla_x|^{\alpha}(u\phi)\,\mathrm{d}x-\sum_{j=1}^n \frac{\alpha \sigma_j}{2}\int_{\mathbb{R}^n} u\phi |\nabla_x|^{\alpha-2}D_1D_j(u\phi)\, \mathrm{d}x+O\left(\left\|u\phi \right\|_{L^{2}_{x}}^{2}\right)\\
 =&-\frac{1}{2}\sum_{j=1}^n\sigma_1 \int_{\mathbb{R}^{n}} |\nabla_x|^{\frac{\alpha-2}{2}}D_j(u\phi) |\nabla_x|^{\frac{\alpha-2}{2}} D_j(u\phi)\,\mathrm{d}x\\
 &-\frac{1}{2}\sum_{j=1}^n \alpha \sigma_j\int_{\mathbb{R}^n} |\nabla_x|^{\frac{\alpha-2}{2}}D_1(u\phi) |\nabla_x|^{\frac{\alpha-2}{2}}D_j(u\phi)\, \mathrm{d}x+O\left(\left\|u\phi \right\|_{L^{2}_{x}}^{2}\right),
 \end{split}
 \end{equation*}
where we have used that $|\nabla_x|^2=\sum_{j=1}^n D_j^2$. 

Thus, setting 
\begin{equation*}
		M=\begin{pmatrix}
			(1+\alpha)\sigma_{1} & \frac{\alpha \sigma_{2}}{2}& \frac{\alpha \sigma_{3}}{2} &\cdots &\frac{\alpha \sigma_{n}}{2}\\
			 \frac{\alpha \sigma_{2}}{2} & \sigma_{1}&0&\cdots &0\\
		 \frac{\alpha \sigma_{3}}{2}&0 &\sigma_{1}&\cdots&0\\
			\vdots&\vdots&\vdots&\ddots&\vdots \\
		 \frac{\alpha \sigma_{2}}{2} & 0&\vdots&\cdots &\sigma_{1},
		\end{pmatrix}
	\end{equation*} 
and denoting by $\langle x,y \rangle_M=x^{T}My$ with $x,y \in \mathbb{R}^n$, we conclude
\begin{equation}\label{eqconcls1}
 \begin{split}
 -\frac{1}{2}\int_{\mathbb{R}^{n}}u \left[\partial_{x_{1}}|\nabla_{x}|^{\alpha}; \varphi\right]u \,\mathrm{d}x &=-\frac{1}{2} \int_{\mathbb{R}^n}\langle \vec{u},\vec{u} \rangle_{M}\, \mathrm{d}x+O\left(\left\|u\phi \right\|_{L^{2}_{x}}^{2}\right),
 \end{split}
 \end{equation}
 where $\vec{u}=(|\nabla_x|^{\frac{\alpha-2}{2}} D_1(u\phi),\dots,|\nabla_x|^{\frac{\alpha-2}{2}} D_n(u\phi))^{T}$. The condition \eqref{sigmacond} implies that $M$ is a symmetric positive definite matrix, in whose case shows us
 \begin{equation*}
 \langle \vec{u},\vec{u} \rangle_{M}\sim |\vec{u}|^2=\sum_{j=1}^n \big(|\nabla_x|^{\frac{\alpha-2}{2}}D_j(u\phi)\big)^2.  
 \end{equation*}
Inserting the above expression into \eqref{eqconcls1}, we obtain \eqref{main1}. Hence, the proof of the lemma is complete.
\end{proof}

We are required to work with a rescaled version of the weighted functions $\varphi$ and $\phi$ defined in \eqref{p2} and \eqref{p1}, respectively. More precisely, for $M>0$, we define 
$$f_M(x)=f\left(\frac{x}{M}\right),$$ 
and consider $\varphi_{M}(x)$ and $\phi_{M}(x)$. As a consequence of Lemma \ref{takamajaka}, we get:
 \begin{cor}\label{Maincorollary}
 	Let $\alpha \in [1,2)$, $\gamma \in \left(\frac{1}{2},\frac{ \alpha+1}{2}\right]$, and $\omega \in \mathbb{R}$. Consider $\sigma = (\sigma_1, \dots, \sigma_n) \in \mathbb{R}^n$ satisfying condition \eqref{sigmacond}. Then, there exist positive constants $c_1$ and $c_2$, depending only on $\alpha$, $n$, $\sigma$, and $\gamma$, such that for any $u \in \mathcal{S}(\mathbb{R}^n)$, the following inequality holds:
 	\begin{equation}\label{maineq2}
 		\int_{\mathbb{R}^n} \left( \partial_{x_1} |\nabla_x|^\alpha u \right) u \, \varphi_M \, \mathrm{d}x 
 		\leq -c_1 \int_{\mathbb{R}^n} \left| |\nabla_x|^{\frac{\alpha}{2}} \left( u \sqrt{ \partial_{x_1} (\varphi_M) } \right) \right|^2 \mathrm{d}x 
 		+ \frac{c_2}{M^\alpha} \int_{\mathbb{R}^n} u^2 \, \partial_{x_1} (\varphi_M) \, \mathrm{d}x.
 	\end{equation}
 	In particular, in dimension $n=1$, one may take $c_1 = \frac{\alpha+1}{2}$, while in higher dimensions $n \geq 2$, if $\sigma_j = 0$ for all $2 \leq j \leq n$, the constant $c_1$ can be taken as $c_1 = \frac{1}{2}$.
 \end{cor}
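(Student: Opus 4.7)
The plan is to reduce the corollary directly to Lemma \ref{takamajaka} by a dilation argument, so that the parameter $M$ only enters through the scaling relations of the fractional Laplacian and of the weights. Setting $v(y) := u(My)$ for $y \in \mathbb{R}^n$, the standard Fourier-side computation gives
\begin{equation*}
(|\nabla_y|^{\alpha} v)(y) = M^{\alpha}\,(|\nabla_x|^{\alpha}u)(My), \qquad (\partial_{y_1}|\nabla_y|^{\alpha}v)(y) = M^{\alpha+1}(\partial_{x_1}|\nabla_x|^{\alpha}u)(My),
\end{equation*}
together with the elementary identities $\varphi_M(My) = \varphi(y)$ and $\phi_M(My) = \phi(y)$. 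Performing the change of variables $x = My$ in the integral on the left-hand side of \eqref{maineq2} then yields the clean reduction
\begin{equation*}
\int_{\mathbb{R}^n} u\,\varphi_M\,\partial_{x_1}|\nabla_x|^\alpha u\,\mathrm{d}x = M^{n-\alpha-1}\int_{\mathbb{R}^n} v\,\varphi\,\partial_{y_1}|\nabla_y|^\alpha v\,\mathrm{d}y.
\end{equation*}

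Next, I would apply Lemma \ref{takamajaka} to $v$ (which lies in $\mathcal{S}(\mathbb{R}^n)$ since $u$ does), producing on the right-hand side the quantities $\int_{\mathbb{R}^n}||\nabla_y|^{\alpha/2}(v\phi)|^2\,\mathrm{d}y$ and $\int_{\mathbb{R}^n}v^2\phi^2\,\mathrm{d}y$. The same change of variables, applied to $w(y) := (u\phi_M)(My) = v(y)\phi(y)$, gives
\begin{equation*}
\int_{\mathbb{R}^n}||\nabla_y|^{\alpha/2}(v\phi)|^2\,\mathrm{d}y = M^{\alpha-n}\int_{\mathbb{R}^n}||\nabla_x|^{\alpha/2}(u\phi_M)|^2\,\mathrm{d}x, \qquad \int_{\mathbb{R}^n}v^2\phi^2\,\mathrm{d}y = M^{-n}\int_{\mathbb{R}^n}u^2\phi_M^2\,\mathrm{d}x.
\end{equation*}
Collecting the overall prefactor $M^{n-\alpha-1}$ leaves the intermediate inequality
\begin{equation*}
\int_{\mathbb{R}^n} u\,\varphi_M\,\partial_{x_1}|\nabla_x|^\alpha u\,\mathrm{d}x \leq -\frac{c_1}{M}\int_{\mathbb{R}^n}||\nabla_x|^{\alpha/2}(u\phi_M)|^2\,\mathrm{d}x + \frac{c_2}{M^{\alpha+1}}\int_{\mathbb{R}^n}u^2\phi_M^2\,\mathrm{d}x,
\end{equation*}
with the constants $c_1,c_2$ furnished by Lemma \ref{takamajaka}.

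Finally, I would convert the weight $\phi_M$ into $\sqrt{\partial_{x_1}\varphi_M}$ using the exact identity $\partial_{x_1}\varphi(x) = \sigma_1 \phi^2(x)$, which under rescaling becomes $\partial_{x_1}\varphi_M(x) = (\sigma_1/M)\phi_M^2(x)$, so that $\phi_M = \sqrt{M/\sigma_1}\,\sqrt{\partial_{x_1}\varphi_M}$. Substituting this identity into both terms cancels the stray powers of $M$ in the gradient term and leaves exactly the factor $1/M^\alpha$ in the remainder, yielding \eqref{maineq2} after renaming the constants via division by $\sigma_1$. The explicit values of $c_1$ stated in the corollary ($c_1 = (\alpha+1)/2$ for $n=1$, and $c_1 = 1/2$ for $n \geq 2$ with $\sigma_2 = \cdots = \sigma_n = 0$) follow immediately from the analogous values in Lemma \ref{takamajaka} after this $\sigma_1^{-1}$ normalization. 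The argument is entirely mechanical, so there is no real analytic obstacle; the only point that requires careful attention is the precise bookkeeping of the scaling exponents of $M$ across the three ingredients (the operator $\partial_{x_1}|\nabla_x|^{\alpha}$, the half-order derivative $|\nabla_x|^{\alpha/2}$, and the Jacobian of the change of variables).
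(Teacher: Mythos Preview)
Your proof is correct and follows exactly the same approach as the paper's own proof: apply Lemma \ref{takamajaka} to the dilated function $v(y)=u(My)$, change variables, and then use the identity $\partial_{x_1}(\varphi_M)=\frac{\sigma_1}{M}\phi_M^2$ to rewrite $\phi_M$ in terms of $\sqrt{\partial_{x_1}\varphi_M}$. Your careful tracking of the powers of $M$ and the $\sigma_1^{-1}$ normalization for the explicit constants is accurate.
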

 \begin{proof}
Applying Lemma \ref{takamajaka} to $v_{M}(x)=u(Mx)$, using that $\partial_{x_1}(\varphi_M)(x)=\frac{\sigma_1}{M}(\phi_M)^2(x)$, i.e., $\phi_{M}(x)=\sigma_1^{-\frac{1}{2}}M^{\frac{1}{2}}\sqrt{\partial_{x_1}(\varphi_M)(x)}$, and changing variables, one obtains \eqref{maineq2} from \eqref{main1}.
 \end{proof}

 
\section{Instability results}\label{SectInstaResults}

We now focus on the deduction of the instability result in Theorem \ref{MainInsta}. For the proof, we adapt the strategy for $L^2$-critical equations established by Martel and Merle \cite{MartelMerle2001}, and the work of Farah, Holmer and Roudenko \cite{FarahHolmerRoudenko2019KdV,FarahHolmerRoudenko2019}. We recall that all our instability results are applied on spatial dimension $n=2$, which corresponds to model \eqref{E:HBO}. 
\subsection{The linearized equation around \texorpdfstring{$Q$}{}}\label{LinearizedSection}
We use the following canonical decomposition of $u$ around $Q$
\begin{equation}\label{eqlineareq1}
    \epsilon(x_1,x_2,t)=\lambda(t)\,  u(\lambda(t)x_1+z_1(t),\lambda(t)x_2+z_2(t),t)-Q(x_1,x_2), \quad (x_1,x_2)\in \mathbb{R}^2.
\end{equation}
We perform the time rescaling $t \mapsto s$ via the relation
\begin{equation}
	\frac{ds}{dt} = \frac{1}{\lambda^2}.
\end{equation}
\begin{lem}[Equation for $\epsilon$]\label{EquationEpsi} Let $L$ be the operator  \eqref{linearizedop}, and $\Lambda$ be given by \eqref{scalingOp}. For all $s\geq 0$, we have
\begin{equation}\label{Epsiloneq}
\begin{aligned}
  \partial_s\epsilon=\partial_{x_1}(L \epsilon)&+\frac{1}{\lambda}\frac{d \lambda}{ds}\Lambda Q+\Big(\frac{1}{\lambda}\frac{d z_1}{ds}-1\Big)\partial_{x_1}Q+\frac{1}{\lambda}\frac{d z_2}{ds}\partial_{x_2}Q\\
    &+\frac{1}{\lambda}\frac{d \lambda}{ds}\Lambda \epsilon+\Big(\frac{1}{\lambda}\frac{d z_1}{ds}-1\Big)\partial_{x_1}\epsilon+\frac{1}{\lambda}\frac{d z_2}{ds}\partial_{x_2}\epsilon\\
    &-\frac{1}{2}\partial_{x_1}(\epsilon^2).
\end{aligned}
\end{equation}
\end{lem}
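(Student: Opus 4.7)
The plan is a direct computation: differentiate the defining relation \eqref{eqlineareq1} in $t$, insert the Shrira equation $\partial_t u = \partial_{x_1}|\nabla_x| u - \tfrac{1}{2}\partial_{x_1}(u^2)$ (obtained from \eqref{E:HBO} using $\mathcal{R}_1 \Delta = \partial_{x_1}|\nabla_x|$), then use the ground state equation \eqref{EQ:groundState} with $\alpha=1$ to absorb the $Q$-contributions, and finally rewrite everything in the rescaled time $s$ via $ds/dt = 1/\lambda^{2}$.

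First I would set $\tilde u(x,t) = u(\lambda(t)x + z(t),t)$ so that $\epsilon = \lambda \tilde u - Q$, and compute
\begin{equation*}
\partial_t \epsilon = \lambda'\,\tilde u + (\lambda' x_1+z_1')\,\partial_{x_1}\tilde u + (\lambda' x_2+z_2')\,\partial_{x_2}\tilde u + \lambda\,(\partial_t u)(\lambda x + z,t).
\end{equation*}
The homogeneity of $|\nabla_x|$ and of $\partial_{x_j}$ under dilation gives $(\partial_{x_1}|\nabla_x| u)(\lambda x+z)=\lambda^{-2}\,\partial_{x_1}|\nabla_x|\tilde u$ and $(\partial_{x_1}(u^2))(\lambda x+z)=\lambda^{-1}\,\partial_{x_1}(\tilde u^2)$, which, together with $\tilde u = (\epsilon+Q)/\lambda$, turn the nonlinear/dispersive contribution into $\lambda^{-2}\bigl[\partial_{x_1}|\nabla_x|(\epsilon+Q) - \tfrac12\partial_{x_1}((\epsilon+Q)^{2})\bigr]$. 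Likewise the local $t$-derivative terms collect as $\frac{\lambda'}{\lambda}\Lambda(\epsilon+Q)+\frac{z_1'}{\lambda}\partial_{x_1}(\epsilon+Q)+\frac{z_2'}{\lambda}\partial_{x_2}(\epsilon+Q)$, after recognizing $\Lambda f = f + x\cdot\nabla f$ in \eqref{scalingOp}.

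Next I would simplify the dispersive/nonlinear bracket. Applying $\partial_{x_1}$ to $Q+|\nabla_x|Q-\tfrac12 Q^{2}=0$ yields $\partial_{x_1}|\nabla_x|Q - \tfrac12\partial_{x_1}(Q^{2}) = -\partial_{x_1}Q$, and expanding $(\epsilon+Q)^2$ gives the cross term $\partial_{x_1}(Q\epsilon)$. Combining with $\partial_{x_1}|\nabla_x|\epsilon$ and $\partial_{x_1}\epsilon$, and using $L\epsilon = |\nabla_x|\epsilon + \epsilon - Q\epsilon$, one obtains
\begin{equation*}
\partial_{x_1}|\nabla_x|(\epsilon+Q) - \tfrac12\partial_{x_1}((\epsilon+Q)^{2}) = \partial_{x_1}(L\epsilon) - \partial_{x_1}\epsilon - \partial_{x_1}Q - \tfrac12\partial_{x_1}(\epsilon^{2}).
\end{equation*}

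Finally, I would convert to the $s$-variable using $d/dt = \lambda^{-2} d/ds$, so that $\lambda^{2}\lambda'/\lambda = \lambda^{-1} d\lambda/ds$ and $\lambda^{2} z_j'/\lambda = \lambda^{-1} dz_j/ds$, and regroup the $\partial_{x_1}Q$ and $\partial_{x_1}\epsilon$ pieces into the factors $(\tfrac{1}{\lambda}\tfrac{dz_1}{ds}-1)$ that appear in \eqref{Epsiloneq}. The main (and only) subtle point is the careful bookkeeping of the $\lambda$-powers: one must correctly track that $|\nabla_x|$ scales like a first-order derivative, and one must not confuse $d/dt$ and $d/ds$ when differentiating $\lambda$ and $z_j$; everything else is algebraic regrouping.
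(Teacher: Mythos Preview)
Your proof is correct and follows exactly the approach indicated in the paper: differentiate the decomposition \eqref{eqlineareq1}, insert the Shrira equation, use the ground state equation \eqref{EQ:groundState} to simplify, and pass to the rescaled time $s$. The paper only sketches this in one sentence, while you have carried out the bookkeeping in detail; the scaling identities and the regrouping you perform are all accurate.
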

\begin{proof}
The proof follows from differentiating \eqref{eqlineareq1} and the fact that $u$ solves \eqref{E:HBO}. Note that the equation above is justified in the distributional sense.
\end{proof}
Next, we deduce the mass and energy conservation laws for $\epsilon$ in \eqref{eqlineareq1}.
\begin{lem}\label{Energyconsrlemma} For any $s\geq 0$, we have the following mass and energy conservations for $\epsilon$
\begin{equation}
    \mathcal{M}[\epsilon(s)]:=2\int_{\mathbb{R}^2} Q(x)\epsilon(x,s)\, \mathrm{d}x +\int_{\mathbb{R}^2} (\epsilon(x,s))^2\, \mathrm{d}x=\mathcal{M}_0
\end{equation}
where 
\begin{equation}\label{Masszero}
    \mathcal{M}_0=2\int_{\mathbb{R}^2} Q(x)\epsilon(x,0)\, \mathrm{d}x+\int_{\mathbb{R}^2} (\epsilon(x,0))^2\, \mathrm{d} x.
\end{equation}
It also follows
\begin{equation}\label{energyscalingrel}
    E[Q+\epsilon(s)]=\lambda(s)\, E[u_0],
\end{equation}
where $E[\cdot]$ is defined in \eqref{E:energy}. Moreover, the energy linearization is
\begin{equation}\label{EnergyLine1}
   \begin{aligned}
     E[Q+\epsilon]=&  \frac{1}{2}(L\epsilon,\epsilon)-\Big(\int_{\mathbb{R}^2} Q\epsilon \, \mathrm{d}x+\frac{1}{2}\int_{\mathbb{R}^2} \epsilon^2\, \mathrm{d}x\Big)-\frac{1}{6}\int_{\mathbb{R}^2} \epsilon^3 \, \mathrm{d}x,
     \end{aligned}
\end{equation}
where the last term of the identity above satisfies
\begin{equation}\label{EnergyLine2}
  \|\epsilon\|_{L^3}^3\lesssim \||\nabla_x|^{\frac{1}{2}}\epsilon\|_{L^2}^{2}\|\epsilon\|_{L^2}.  
\end{equation}
\end{lem}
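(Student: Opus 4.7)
The lemma collects four identities; each is essentially formal, and my plan is to extract them in order from the definition \eqref{eqlineareq1}, the ground state equation \eqref{EQ:groundState}, and a Gagliardo-Nirenberg inequality. The only structural ingredient that requires care is a Pohozaev computation that makes $E[Q]$ vanish in the critical dimension.

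For the mass and energy scaling identities, I would exploit the fact that, by \eqref{eqlineareq1}, $(Q+\epsilon)(\cdot,s)$ is just the image of $u(\cdot,t)$ under the $L^2$-invariant scaling \eqref{scaling1} composed with a translation. A single change of variables then yields $\|Q+\epsilon(\cdot,s)\|_{L^2}^{2} = \|u(\cdot,t)\|_{L^2}^{2}$; by Assumption \ref{Assm1} this equals $\|u_0\|_{L^2}^{2}$, constant in $s$. Expanding $(Q+\epsilon)^2$ and subtracting $\|Q\|_{L^2}^{2}$ delivers $\mathcal{M}[\epsilon(s)] = \|u_0\|_{L^2}^{2} - \|Q\|_{L^2}^{2} = \mathcal{M}_0$, the last equality obtained by specializing to $s=0$. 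The same change of variables, now tracking that $|\nabla_x|^{\frac{1}{2}}$ acts by a factor $\lambda^{\frac{1}{2}}$ under $f(x)\mapsto f(\lambda x+z)$ and that the cubic term picks up $\lambda^3$, gives $\||\nabla_x|^{\frac{1}{2}}(Q+\epsilon)\|_{L^2}^{2} = \lambda\||\nabla_x|^{\frac{1}{2}}u(t)\|_{L^2}^{2}$ and $\int (Q+\epsilon)^3\,\mathrm{d}x = \lambda\int u^3(y,t)\,\mathrm{d}y$, whence $E[Q+\epsilon(s)] = \lambda(s)E[u(t)] = \lambda(s)E[u_0]$.

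For the linearization \eqref{EnergyLine1}, I would expand $E[Q+\epsilon]$ brute force. The only nontrivial step is to reduce the cross term: by \eqref{EQ:groundState} with $\alpha=1$, $n=2$ one has $|\nabla_x|Q = \tfrac{1}{2}Q^2 - Q$, so self-adjointness of $|\nabla_x|^{\frac{1}{2}}$ gives $\int |\nabla_x|^{\frac{1}{2}}Q\cdot|\nabla_x|^{\frac{1}{2}}\epsilon\,\mathrm{d}x = \int(\tfrac{1}{2}Q^2 - Q)\epsilon\,\mathrm{d}x$. Grouping the quadratic-in-$\epsilon$ terms via $(L\epsilon,\epsilon) = \||\nabla_x|^{\frac{1}{2}}\epsilon\|_{L^2}^{2} + \|\epsilon\|_{L^2}^{2} - \int Q\epsilon^2\,\mathrm{d}x$ collapses the expansion to
\begin{equation*}
E[Q+\epsilon] = E[Q] + \tfrac{1}{2}(L\epsilon,\epsilon) - \Big(\int Q\epsilon\,\mathrm{d}x + \tfrac{1}{2}\int\epsilon^2\,\mathrm{d}x\Big) - \tfrac{1}{6}\int \epsilon^3\,\mathrm{d}x.
\end{equation*}
To match the stated identity, it remains to argue $E[Q]=0$. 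This is the critical-dimension Pohozaev identity: pairing \eqref{EQ:groundState} with $\Lambda Q$ and using Proposition \ref{scalingprop}(ii) (which is precisely $(\Lambda Q, Q) = 0$ in $n=2$) together with the Fourier-side dilation computations $(\Lambda Q, |\nabla_x|Q) = \tfrac{1}{2}\||\nabla_x|^{\frac{1}{2}}Q\|_{L^2}^{2}$ and $(\Lambda Q, Q^2) = \tfrac{1}{3}\int Q^3\,\mathrm{d}x$ in $\mathbb{R}^2$, one obtains $\||\nabla_x|^{\frac{1}{2}}Q\|_{L^2}^{2} = \tfrac{1}{3}\int Q^3\,\mathrm{d}x$ and hence $E[Q] = \tfrac{1}{6}\int Q^3\,\mathrm{d}x - \tfrac{1}{6}\int Q^3\,\mathrm{d}x = 0$. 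I anticipate this Pohozaev step to be the main obstacle, since it is the only part of the proof that uses anything beyond formal algebra.

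Finally, the estimate \eqref{EnergyLine2} is just the cube of the standard two-dimensional Gagliardo-Nirenberg inequality $\|f\|_{L^3(\mathbb{R}^2)} \lesssim \||\nabla_x|^{\frac{1}{2}}f\|_{L^2}^{2/3}\|f\|_{L^2}^{1/3}$, with exponent $\theta = 2/3$ fixed by scaling. It follows from the Sobolev embedding $\dot{H}^{\frac{1}{3}}(\mathbb{R}^2)\hookrightarrow L^3(\mathbb{R}^2)$ interpolated against $L^2$, or equivalently from a Littlewood-Paley argument, and requires no further input from the equation.
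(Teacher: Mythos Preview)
Your proposal is correct and follows essentially the same approach as the paper: the paper sketches the argument by citing \cite{MartelMerle2001,FarahHolmerRoudenko2019KdV,FarahHolmerRoudenko2019}, invokes the fractional Gagliardo--Nirenberg inequality \eqref{G_N} with $p=3$ for \eqref{EnergyLine2}, and singles out the Pohozaev identity $\tfrac{1}{2}\||\nabla_x|^{\frac{1}{2}}Q\|_{L^2}^{2}=\tfrac{1}{6}\int Q^3$ (i.e., $E[Q]=0$) as the key nontrivial input for \eqref{EnergyLine1}. Your write-up supplies precisely these ingredients with the details filled in, including the dilation computation of the Pohozaev relation, so there is no substantive difference in method.
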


\begin{proof} The proof follows from similar arguments in the deduction of the energy and mass conservation for $\epsilon$ in \cite{MartelMerle2001,FarahHolmerRoudenko2019KdV,FarahHolmerRoudenko2019}. We emphasize that estimate \eqref{EnergyLine2} is a consequence of the \emph{fractional Gagliardo-Nirenberg inequality}, which  states that given $f\in H^{\frac{1}{2}}(\mathbb{R}^2)$,  then $f\in L^{p}(\mathbb{R}^2)$ for all $2<p<4$, and it follows 
\begin{equation}\label{G_N}
  \Vert f\Vert^{p}_{L^{p}(\mathbb{R}^2)}\lesssim \| |\nabla_x|^{\frac{1}{2}}f\|^{2(p-2)}_{L^2(\mathbb{R}^2)}\| f\|^{p-2(p-2)}_{L^2(\mathbb{R}^2)}.
\end{equation}
We also remark, that the deduction of \eqref{EnergyLine1} requires the following identity
\begin{equation*}
    \frac{1}{2}\int_{\mathbb{R}^2} \big||\nabla_x|^{\frac{1}{2}}Q\big|^2\, \mathrm{d}x=\frac{1}{6}\int_{\mathbb{R}^2} Q^3\, \mathrm{d}x,
\end{equation*}
which is obtained via Pohozaev identities, its deduction can be consulted in \cite[Remark 2]{RianoRoudenkoYang2022}. 
\end{proof}

\subsection{Modulation theory and parameter estimates}
This part, concerns the construction of the scaling parameter $\lambda(s)$ and the translation $z(s)=(z_1(s),z_2(s))$ in \eqref{eqlineareq1} such that $\epsilon(s)\perp \psi_0$, $\epsilon(s) \perp \partial_{x_1}Q$, $\epsilon(s) \perp \partial_{x_2}Q$.
\begin{prop} \label{propmodpar}
(i) There exists $\omega,\lambda_0>0$ and a unique $C^1$ map
    \begin{equation*}
        (\lambda,z)=(\lambda,z_1,z_2): U_{\omega}\rightarrow (1-\lambda_0,1+\lambda_0)\times \mathbb{R}^2
    \end{equation*}
such that if $u\in U_{\omega}$ and $\epsilon_{\lambda,z}$ is given by
\begin{equation}\label{defiepsilonpPara}
    \epsilon_{\lambda,z}(x)=\lambda u (\lambda x+z)-Q(x),
\end{equation}
then
\begin{equation}\label{orthocond1}
    \epsilon_{\lambda,z}\perp \psi_0, \qquad \epsilon_{\lambda,z}\perp \partial_{x_1}Q, \quad \text{ and }\quad \epsilon_{\lambda,z}\perp \partial_{x_2}Q. 
\end{equation}
Moreover, there exist a constant $C_1>0$, such that if $u\in U_{\omega_1}$ with $0<\omega_1<\omega$, then 
\begin{equation}\label{paradecay}
\|\epsilon_{\lambda,z}\|_{H^{\frac{1}{2}}}\leq C_1\omega_1\, \, \text{ and } \, |\lambda-1|+|z|\leq C_1\omega_1.    
\end{equation}
\\
(ii) If in addition, $u$ is cylindrically symmetric (i.e., $u(x_1,x_2)=u(x_1,|x_2|)$), then, taking $\omega>0$ smaller, if necessary, it follows $z_2=0$.
\end{prop}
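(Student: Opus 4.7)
The proof plan is to apply the implicit function theorem (IFT) to the $\mathbb{R}^{3}$-valued map
\begin{equation*}
F(u,\lambda,z) := \big( (\epsilon_{\lambda,z},\psi_{0}),\ (\epsilon_{\lambda,z},\partial_{x_{1}}Q),\ (\epsilon_{\lambda,z},\partial_{x_{2}}Q) \big),
\end{equation*}
with $\epsilon_{\lambda,z}$ as in \eqref{defiepsilonpPara}, around the base point $(u,\lambda,z)=(Q,1,0)$, where $\epsilon_{\lambda,z}\equiv 0$ and hence $F(Q,1,0)=0$. A direct computation from \eqref{defiepsilonpPara} yields $\partial_{\lambda}\epsilon_{\lambda,z}\big|_{(1,0)} = \Lambda Q$ and $\partial_{z_{j}}\epsilon_{\lambda,z}\big|_{(1,0)} = \partial_{x_{j}}Q$, so the differential of $F$ in $(\lambda,z)$ at the base point is the $3\times 3$ matrix whose $(i,k)$-entry is the inner product of one of $\{\Lambda Q,\partial_{x_{1}}Q,\partial_{x_{2}}Q\}$ with one of $\{\psi_{0},\partial_{x_{1}}Q,\partial_{x_{2}}Q\}$.

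The main step is to verify the non-degeneracy of this Jacobian. Since $Q$ and $\psi_{0}$ are radial (so $\Lambda Q$ is radial as well), while $\partial_{x_{j}}Q$ is odd in $x_{j}$ and even in the other variable, the parity of the integrands forces
\begin{equation*}
(\psi_{0},\partial_{x_{j}}Q) = (\Lambda Q,\partial_{x_{j}}Q) = (\partial_{x_{1}}Q,\partial_{x_{2}}Q) = 0,
\end{equation*}
so the Jacobian is diagonal, with entries $(\Lambda Q,\psi_{0})$, $\|\partial_{x_{1}}Q\|_{L^{2}}^{2}$, $\|\partial_{x_{2}}Q\|_{L^{2}}^{2}$. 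For the first entry I would use Proposition \ref{scalingprop}(i): since $L\psi_{0}=-\mu_{0}\psi_{0}$ and $L(\Lambda Q)=-Q$, the self-adjointness of $L$ gives
\begin{equation*}
-\mu_{0}(\Lambda Q,\psi_{0}) = (\Lambda Q,L\psi_{0}) = (L\Lambda Q,\psi_{0}) = -(Q,\psi_{0}),
\end{equation*}
whence $(\Lambda Q,\psi_{0}) = \mu_{0}^{-1}(Q,\psi_{0}) > 0$ (as $Q,\psi_{0}>0$). The IFT then produces a unique $C^{1}$ map $u\mapsto (\lambda(u),z(u))$ on some $H^{1/2}$-neighbourhood of $Q$ solving $F=0$, i.e., yielding \eqref{orthocond1}. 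The bound \eqref{paradecay} follows from the Lipschitz continuity built into the IFT, after first using the translation invariance of $U_{\omega_{1}}$ to locate $u$ near a translate $Q(\cdot+\widetilde z)$ with $\|u-Q(\cdot+\widetilde z)\|_{H^{1/2}}\leq \omega_{1}$, which supplies an initial guess $(\lambda,z)=(1,\widetilde z)$ giving $\|\epsilon_{1,\widetilde z}\|_{H^{1/2}}\leq \omega_{1}$, and then perturbing off it via the IFT.

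For (ii), I would exploit the uniqueness in the IFT together with symmetry. If $u(x_{1},x_{2})=u(x_{1},-x_{2})$, then $\epsilon_{\lambda,(z_{1},0)}(x_{1},x_{2}) = \lambda u(\lambda x_{1}+z_{1},\lambda x_{2}) - Q(x_{1},x_{2})$ is even in $x_{2}$; since $\partial_{x_{2}}Q$ is odd in $x_{2}$, the third component of $F$ vanishes automatically on the slice $z_{2}=0$. Restricting the IFT argument to this slice (where the relevant $2\times 2$ sub-Jacobian is the upper-left non-singular block already computed) produces a solution with $z_{2}=0$; by uniqueness from part (i), this must coincide with $(\lambda(u),z(u))$, so $z_{2}(u)=0$. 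I expect the only nontrivial obstacle to be the non-degeneracy computation, specifically showing $(\Lambda Q,\psi_{0})\neq 0$, which depends in an essential way on the structural identity $L(\Lambda Q)=-Q$ from Proposition \ref{scalingprop}; once that is in hand the remainder is a routine IFT plus parity argument.
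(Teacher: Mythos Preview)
Your proposal is correct and follows exactly the approach the paper indicates: the paper's proof simply states that ``the proof follows from the implicit function theorem'' and cites \cite{FarahHolmerRoudenko2019,MartelMerle2001,Bouard1996}, and your argument is a faithful fleshing-out of that, including the key non-degeneracy computation $(\Lambda Q,\psi_0)=\mu_0^{-1}(Q,\psi_0)>0$ via $L(\Lambda Q)=-Q$ and the parity/uniqueness argument for $z_2=0$.
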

\begin{proof}
The proof follows from the implicit function theorem, we refer to 
\cite{FarahHolmerRoudenko2019,MartelMerle2001,Bouard1996}.
\end{proof}
Assuming that $u(t)\in U_{\omega}$ for all $t\geq 0$. We defined the functions $\lambda(t)$ and $z(t)$ as follows.
\begin{defn}\label{modparamdef}
For all $t\geq 0$, let $\lambda(t)$ and $z(t)$ be such that $\epsilon_{\lambda(t),z(t)}$, defined according to \eqref{defiepsilonpPara}, satisfy
\begin{equation*}
\epsilon_{\lambda(t),z(t)}\perp \psi_0, \, \, \epsilon_{\lambda(t),z(t)}\perp \partial_{x_1}Q=0 \, \, \text{ and } \, \, \epsilon_{\lambda(t),z(t)}\perp \partial_{x_2}Q.
\end{equation*}
In this case, we also define
\begin{equation*}
\epsilon(t)=\epsilon_{\lambda(t),z(t)}(x)=\lambda(t)\, u(\lambda(t) x+z(t))-Q(x), \quad x\in \mathbb{R}^2.
\end{equation*}
\end{defn}
Working with the rescaling  $t\mapsto s$ given by $\frac{ds}{dt}=\frac{1}{\lambda^2}$, we can deduce the equation for $\frac{d\lambda}{ds}$ and $\frac{dz_1}{ds}$.

\begin{lem}[Modulation Parameters]\label{modparaL}
There exists $0<\omega_1<\omega$ ($\omega>0$ as in Proposition \ref{propmodpar}) such that if $u\in C([0,\infty),H^{\frac{1}{2}}(\mathbb{R}^2))$ solves \eqref{E:HBO} is cylindrical symmetric, and for all $t\geq 0$, $u(t)\in U_{\omega_1}$, then $\lambda$ and $z_1$ are $C^1$ functions and they satisfy the following equations:
\begin{equation}\label{modulationEq}
\begin{aligned}
-\frac{1}{\lambda}\frac{d\lambda}{ds}\int_{\mathbb{R}^2} \big( x\cdot \nabla \partial_{x_1}Q\big)\epsilon\, \mathrm{d}x &+\Big(\frac{1}{\lambda}\frac{d z_1}{ds}-1\Big)\Big(\int_{\mathbb{R}^2} |\partial_{x_1}Q|^2\, \mathrm{d}x-\int_{\mathbb{R}^2} \partial_{x_1}^2Q \epsilon \, \mathrm{d}x\Big)\\
&=\int_{\mathbb{R}^2} (\partial_{x_1}Q)^2\epsilon \, \mathrm{d}x-\frac{1}{2}\int_{\mathbb{R}^2} \epsilon^2 \partial_{x_1} Q\, \mathrm{d}x,
\end{aligned}    
\end{equation}
and 
\begin{equation}\label{modulationEq1}
\begin{aligned}
\frac{1}{\lambda}\frac{d\lambda}{ds}\Big(\frac{1}{\mu_0}\int_{\mathbb{R}^2} \psi_0 Q\, \mathrm{d}x-&\int_{\mathbb{R}^2} \big(x\cdot \nabla \psi_0\big)\epsilon\, \mathrm{d}x\Big)-\Big(\frac{1}{\lambda}\frac{dz_1}{ds}-1\Big)\int_{\mathbb{R}^2} \partial_{x_1}\psi_0 \epsilon\, \mathrm{d}x\\
=&\int L(\partial_{x_1}\psi_0 )\epsilon\, \mathrm{d}x-\frac{1}{2}\int_{\mathbb{R}^2} \partial_{x_1} \psi_0 \epsilon^2 \mathrm{d}x.
\end{aligned}    
\end{equation}
Moreover, there exists a universal constant $C_2>0$ such that if $\|\epsilon(s)\|_{L^2}\leq \widetilde{\omega}$, for all $s\geq 0$, where $\widetilde{\omega}<\omega_1$, then
\begin{equation}\label{boundparams}
  \Big|\frac{1}{\lambda}\frac{d \lambda}{ds}\Big|+ \Big|\frac{1}{\lambda}\frac{d z_1}{ds}-1\Big|\leq C_2\big(\|\epsilon\|_{L^2}+\|\epsilon\|_{L^2}^2\big).   
\end{equation}
\end{lem}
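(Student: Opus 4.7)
The plan is to derive both ODEs by differentiating in $s$ the orthogonality relations $(\epsilon(s),\partial_{x_1}Q)=0$ and $(\epsilon(s),\psi_0)=0$, substituting the evolution equation \eqref{Epsiloneq} (with $z_2\equiv 0$ from cylindrical symmetry and Proposition \ref{propmodpar}(ii)), and then reading off \eqref{boundparams} from a $2\times 2$ linear-algebraic analysis. For the $C^1$ regularity, I would first invoke Proposition \ref{propmodpar} to obtain $(\lambda, z_1)$ as a $C^1$ function of $u\in H^{1/2}$, so that continuity in $t$ is automatic; to upgrade to $C^1$ in $s$ I would use Assumption \ref{Assm1} to approximate $u$ in $C([0,T];H^{1/2})$ by smooth solutions $u^{(n)}$ for which \eqref{Epsiloneq} is classical, derive the ODE system for the smooth approximations, and pass to the limit once the right-hand sides are shown to be continuous in $\epsilon\in H^{1/2}$.

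The derivation of \eqref{modulationEq} proceeds by pairing \eqref{Epsiloneq} with $\partial_{x_1}Q$. The dispersive term becomes $(\partial_{x_1}(L\epsilon),\partial_{x_1}Q)=-(\epsilon,L\partial_{x_1}^2Q)$ by integration by parts and the self-adjointness of $L$; the commutator identity $[L,\partial_{x_1}]=(\partial_{x_1}Q)\,\cdot$ together with $L\partial_{x_1}Q=0$ gives $L\partial_{x_1}^2Q=(\partial_{x_1}Q)^2$, producing the term $\int(\partial_{x_1}Q)^2\epsilon\,\mathrm{d}x$ on the right-hand side after transposition. The leading coefficient $(\Lambda Q,\partial_{x_1}Q)$ of $\frac{1}{\lambda}\dot\lambda$ vanishes by radial symmetry of $Q$ (the integrand is odd in $x_1$); the $(\Lambda\epsilon,\partial_{x_1}Q)$ term simplifies, after integration by parts and use of the orthogonality $(\epsilon,\partial_{x_1}Q)=0$, to $-(x\cdot\nabla\partial_{x_1}Q,\epsilon)$, yielding the coefficient of $\frac{1}{\lambda}\dot\lambda$ in \eqref{modulationEq}. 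The cubic correction comes from $-\tfrac12(\partial_{x_1}(\epsilon^2),\partial_{x_1}Q)$, a pure quadratic in $\epsilon$. For \eqref{modulationEq1} the analogous pairing with $\psi_0$ produces the leading coefficient $(\Lambda Q,\psi_0)=(Q,\psi_0)/\mu_0$, which I would obtain by combining $L\psi_0=-\mu_0\psi_0$ with $L(\Lambda Q)=-Q$ from Proposition \ref{scalingprop}; all other terms follow the same pattern, and the dispersive contribution is kept in the form $(\epsilon,L\partial_{x_1}\psi_0)$.

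For the estimate \eqref{boundparams}, the two ODEs can be written as
\begin{equation*}
M(\epsilon)\begin{pmatrix}\frac{1}{\lambda}\frac{d\lambda}{ds}\\[2pt] \frac{1}{\lambda}\frac{dz_1}{ds}-1\end{pmatrix}
=\begin{pmatrix}B_1(\epsilon)\\[2pt] B_2(\epsilon)\end{pmatrix},
\end{equation*}
where at $\epsilon=0$ the matrix $M(0)$ is antidiagonal with entries $\|\partial_{x_1}Q\|_{L^2}^2$ and $(Q,\psi_0)/\mu_0$, so that $\det M(0)=-\|\partial_{x_1}Q\|_{L^2}^2\,(Q,\psi_0)/\mu_0\neq 0$ (positivity of $Q$ and $\psi_0$ guarantees $(Q,\psi_0)>0$). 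For $\|\epsilon\|_{L^2}\leq\widetilde\omega$ small, $M(\epsilon)$ remains invertible with a uniform lower bound on $|\det M(\epsilon)|$; meanwhile Cauchy--Schwarz applied to the linear-in-$\epsilon$ entries of $M(\epsilon)$ and $B_j(\epsilon)$, together with the $L^\infty$ bounds on $\partial_{x_1}Q,\partial_{x_1}^2Q,x\cdot\nabla\psi_0,L\partial_{x_1}\psi_0$ (which follow from \eqref{Qdecay} and Theorem \ref{propL}), bound each $|B_j(\epsilon)|$ by a constant times $\|\epsilon\|_{L^2}+\|\epsilon\|_{L^2}^2$. Cramer's rule then yields \eqref{boundparams}.

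The main technical obstacle is the rigorous justification of differentiating $(\epsilon(s),\partial_{x_1}Q)$ and $(\epsilon(s),\psi_0)$ in time when $u\in C([0,\infty);H^{1/2})$, because $\partial_{x_1}(L\epsilon)$ a priori fails to be in $H^{1/2}$. I would handle this by working with the smooth approximations granted by Assumption \ref{Assm1}, establishing the ODEs and their estimates at the approximate level, and then passing to the limit using the $H^{1/2}$-continuity of every bilinear/quadratic form appearing on both sides—after which the limiting right-hand sides' continuity in $s$ retroactively proves the $C^1$ regularity claimed in the statement.
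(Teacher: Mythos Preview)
Your proposal is correct and follows essentially the same approach as the paper: differentiate the two orthogonality relations $(\epsilon,\partial_{x_1}Q)=0$ and $(\epsilon,\psi_0)=0$ in $s$, substitute the equation \eqref{Epsiloneq} for $\epsilon$ with $z_2=0$, justify the computation via the smooth approximations of Assumption~\ref{Assm1}, and then invert the resulting $2\times 2$ system (whose matrix is nondegenerate at $\epsilon=0$) to obtain \eqref{boundparams}. The paper's own proof is only a brief sketch referring to \cite{MartelMerle2001,FarahHolmerRoudenko2019}, and your outline supplies exactly the details that sketch omits.
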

\begin{proof} The proof follows from deriving $\int_{\mathbb{R}^2} \partial_{x_1}Q\, \epsilon(s)\,\mathrm{d}x$ and $\int_{\mathbb{R}^2} \psi_0\, \epsilon(s)\,\mathrm{d}x$ with respect to $s$, and then using the equation in Lemma \ref{EquationEpsi}  with $z_2=0$. Here, Assumption \ref{Assm1} is important to justify such a procedure via smooth functions, and then take the limit. Thus, Lemma \ref{modparaL} follows closely the arguments in \cite{MartelMerle2001,FarahHolmerRoudenko2019}, so we omit its derivation. It is worth noting that if $\|\epsilon\|_{H^{\frac{1}{2}}}$ is small (granted taking $\omega_1$ small), the estimates \eqref{modulationEq} and \eqref{modulationEq1} lead to a system of equations that establishes \eqref{boundparams}. 
\end{proof}
\subsection{Virial-Type Estimates}\label{VirialEstSec}
Let us now deduce some key virial-type estimates. But first, we recall the function $\chi\in C^{\infty}_c(\mathbb{R})$ being such that $\chi(x)=1$, if $|x|\leq 1$, $\chi(x)=0$, whenever $|x|\geq 2$. Given $A\geq 1$, we also define
\begin{equation*}
   \chi_A(x)=\chi\Big(\frac{x}{A}\Big), \quad x\in \mathbb{R}.
\end{equation*}
Consider the function
\begin{equation}\label{Fdef}
   F(x_1,x_2)=\int_{-\infty}^{x_1}\Lambda Q(z,x_2)\, dz, \quad x=(x_1,x_2)\in \mathbb{R}^2.
\end{equation}
We obtain the following result from the decay properties of $Q$ in \eqref{Qdecay}. 
\begin{prop}\label{PropdecayF}
Let $F$ be given as in \eqref{Fdef}, $l,k\in \mathbb{Z}$ with $l,k\geq 0$. Assume that $k\geq 1$, then
\begin{equation*}
\begin{aligned}
|\partial_{x_1}^k\partial_{x_2}^{l}F(x_1,x_2)| \lesssim \frac{1}{\langle x\rangle^{3+(k-1)+l}}, \quad x=(x_1,x_2)\in \mathbb{R}^2.  
\end{aligned}
\end{equation*}
In the case $k=0$, let $\alpha_1+\alpha_2=3+l$ with $\alpha_2>1$, it follows
\begin{equation*}
\begin{aligned}
|\partial_{x_2}^{l}F(x_1,x_2)| \lesssim \frac{1}{\langle x_2 \rangle^{\alpha_1}}\Big|\int_{-\infty}^{x} \frac{1}{\langle z \rangle^{\alpha_2}}\, dz \Big|, 
\end{aligned}
\end{equation*}
with implicit constant depending on $\alpha_1,\alpha_2$ and independent of $x$, which implies
\begin{equation}\label{decayFinyvar0}
\begin{aligned}
\sup_{x\in \mathbb{R}}|\partial_{x_2}^{l}F(x_1,x_2)| \lesssim \frac{1}{\langle x_2 \rangle^{\alpha_1}}. 
\end{aligned}
\end{equation}
Additionally, if $x_1<0$
\begin{equation}\label{decayFinyvar}
\begin{aligned}
|\partial_{x_2}^{l}F(x_1,x_2)| \lesssim \frac{1}{\langle x_2 \rangle^{\alpha_1}}\frac{1}{\langle x_1 \rangle^{\alpha_2}}\Big|\int_{-\infty}^{x_1} \frac{1}{\langle z \rangle^{\alpha_3}}\, dz \Big|,  
\end{aligned}
\end{equation}
where $\alpha_1+\alpha_2+\alpha_3=3+l$ with $\alpha_3>1$, and the implicit constant is independent of $x_1<0$.
\end{prop}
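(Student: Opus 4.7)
The plan is to reduce the problem to pointwise decay estimates on $\Lambda Q$ and its mixed derivatives, and then distinguish the two regimes $k \geq 1$ (no integration in $x_1$) and $k = 0$ (one integration from $-\infty$). First I would establish that for every pair $(k,l)$ of nonnegative integers,
\begin{equation*}
|\partial_{x_1}^k \partial_{x_2}^l (\Lambda Q)(x)| \lesssim \langle x \rangle^{-3-k-l}, \qquad x \in \mathbb{R}^2.
\end{equation*}
This follows directly from \eqref{Qdecay} (with $n=2$, $\alpha=1$) and the definition $\Lambda Q = Q + x_1 \partial_{x_1}Q + x_2 \partial_{x_2}Q$: applying Leibniz, each term produced by the derivative $\partial_{x_1}^k\partial_{x_2}^l$ is either $\partial^{\gamma}Q$ with $|\gamma|=k+l$, or $x_j\,\partial^{\gamma'}Q$ with $|\gamma'|=k+l+1$, so the bound $\langle x\rangle^{-3-k-l}$ is immediate using $|x_j|\lesssim \langle x\rangle$. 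The $k\geq 1$ case of Proposition~\ref{PropdecayF} is then immediate from the Fundamental Theorem of Calculus, since $\partial_{x_1}^k\partial_{x_2}^l F = \partial_{x_1}^{k-1}\partial_{x_2}^l \Lambda Q$ with $k-1\geq 0$.

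For the case $k=0$, the essential technical ingredient is the elementary product splitting
\begin{equation*}
\langle (z,x_2)\rangle^{-(3+l)} \leq \langle x_2 \rangle^{-\alpha_1}\langle z \rangle^{-\alpha_2}, \qquad \alpha_1+\alpha_2 = 3+l,\ \alpha_1,\alpha_2 \geq 0,
\end{equation*}
which comes from $\langle (z,x_2)\rangle \geq \max(\langle z\rangle,\langle x_2\rangle)$. Integrating the bound for $|\partial_{x_2}^l \Lambda Q|$ in $z$ then yields
\begin{equation*}
|\partial_{x_2}^l F(x_1,x_2)| \lesssim \langle x_2\rangle^{-\alpha_1}\Bigl|\int_{-\infty}^{x_1}\langle z\rangle^{-\alpha_2}\,dz\Bigr|,
\end{equation*}
with the integral convergent precisely when $\alpha_2 > 1$. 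The uniform bound \eqref{decayFinyvar0} follows by taking the supremum in $x_1$ and using $\int_{\mathbb{R}}\langle z\rangle^{-\alpha_2}\,dz < \infty$.

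For \eqref{decayFinyvar} in the regime $x_1 < 0$, the additional decay factor $\langle x_1\rangle^{-\alpha_2}$ is extracted from the observation that $z \leq x_1 < 0$ forces $|z| \geq |x_1|$, hence $\langle z\rangle \geq \langle x_1 \rangle$ on the entire integration range. Splitting the decay of $\Lambda Q$ into three factors via
\begin{equation*}
\langle (z,x_2)\rangle^{-(3+l)} \leq \langle x_2\rangle^{-\alpha_1}\langle z\rangle^{-\alpha_2}\langle z\rangle^{-\alpha_3}, \qquad \alpha_1+\alpha_2+\alpha_3 = 3+l,
\end{equation*}
and then using $\langle z\rangle^{-\alpha_2} \leq \langle x_1\rangle^{-\alpha_2}$ on the range of integration produces the claimed estimate, with the remaining integral $\int_{-\infty}^{x_1}\langle z\rangle^{-\alpha_3}\,dz$ finite whenever $\alpha_3 > 1$. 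The only genuine subtlety is balancing the exponents $\alpha_i$: siphoning more decay into the prefactor $\langle x_1\rangle^{-\alpha_2}$ leaves less for the integrability budget $\alpha_3 > 1$, so the sharpness of these bounds in downstream virial estimates depends on a careful choice of the split — but the proofs themselves are entirely routine.
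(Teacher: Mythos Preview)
Your proof is correct and is precisely the routine argument the paper has in mind: the authors do not give an explicit proof, stating only that the result follows from the decay properties \eqref{Qdecay}, and your derivation spells out exactly that. The Leibniz expansion for $\partial_{x_1}^k\partial_{x_2}^l(\Lambda Q)$, the identification $\partial_{x_1}^k\partial_{x_2}^l F=\partial_{x_1}^{k-1}\partial_{x_2}^l(\Lambda Q)$ for $k\geq 1$, and the factorization $\langle(z,x_2)\rangle^{-(3+l)}\leq \langle x_2\rangle^{-\alpha_1}\langle z\rangle^{-\alpha_2}$ together with the monotonicity $\langle z\rangle\geq\langle x_1\rangle$ on $\{z\leq x_1<0\}$ are exactly the ingredients required.
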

As a consequence of the previous result, we have $F\chi_A\in H^s(\mathbb{R}^2)$ for all $s\geq 0$, that is,
\begin{equation}\label{regcond}
    F\chi_A\in \bigcap_{s\geq 0}H^s(\mathbb{R}^2)=H^{\infty}(\mathbb{R}^2).
\end{equation}
Next, we define the functional
\begin{equation}\label{Jfunctional}
\begin{aligned}
J_A(s)=\int_{\mathbb{R}^2}\epsilon(x_1,x_2)F(x_1,x_2)\chi_A(x_1)\,\mathrm{d}x.  
\end{aligned}   
\end{equation}
By Proposition \ref{PropdecayF}, Cauchy-Schwarz inequality, and support considerations of the function $\chi_A$, we deduce
\begin{equation}\label{Jesteq}
\begin{aligned}
|J_A(s)|\leq & \|\epsilon(s)\|_{L^2}\|F\chi_A\|_{L^2}\\
\lesssim &\|\epsilon(s)\|_{L^2}\Big(\|\chi_{A}\|_{L^{\infty}}\|F\|_{L^2((-\infty,0)\times \mathbb{R}))}+\|\sup_{x_1}|F(x_1,\cdot)|\|_{L^2(\mathbb{R})}\|\chi_A\|_{L^2([0,\infty))}\Big)\\
\lesssim &(1+A^{\frac{1}{2}})\|\epsilon(s)\|_{L^2},
\end{aligned}   
\end{equation}
where the implicit constant above is independent of $A\geq 1$. 
\begin{lem}\label{eqJA}
 Suppose that $\epsilon(s)\in H^{\frac{1}{2}}(\mathbb{R}^2)$ for all $s\geq 0$. Let $\gamma_1\in (0,1)$ be fixed. Then the function $s\mapsto J_A(s)$ is $C^1$ and
\begin{equation*}
\begin{aligned}
\frac{d}{ds}J_A=&-\frac{1}{\lambda}\frac{d \lambda}{ds}\Big(J_A-\frac{1}{2}\int_{\mathbb{R}} \Big( \int_{\mathbb{R}} \Lambda Q(x_1,x_2)\, \mathrm{d}x_1\Big)^2\, \mathrm{d}x_2\Big)+\Big(1-\Big(\frac{1}{\lambda}\frac{dz_1}{ds}-1\Big)\Big)\int_{\mathbb{R}^2} \epsilon Q\, \mathrm{d}x \\
&+R(\epsilon,A),
\end{aligned} 
\end{equation*}
where
\begin{equation}\label{remaindereq}
\begin{aligned}
|R(\epsilon,A)|\lesssim & \Big(1+\frac{1}{A}\Big)\|\epsilon\|^2_{L^2}+\Big(\frac{1}{A}+\frac{1}{A^{\frac{1}{2}}}\Big)\|\epsilon\|_{L^2} \\
&+ \Big|\frac{1}{\lambda}\frac{d \lambda}{ds}\Big|\bigg(\frac{1}{A^{\gamma_1}}+\|\epsilon\|_{L^2} +A^{\frac{1}{2}}\|\epsilon\|_{L^2([A,\infty)\times \mathbb{R})}+\Big|\int_{\mathbb{R}^2} \epsilon\,  x_2 \partial_{x_2}F \chi_A\, \mathrm{d}x\Big|\bigg)   \\
&+\Big|\frac{1}{\lambda}\frac{d z_1}{ds}-1\Big|\bigg(\frac{1}{A}+\Big(1+\frac{1}{A^{\frac{1}{2}}}+\frac{1}{A}\Big)\|\epsilon\|_{L^2}\bigg),
\end{aligned}   
\end{equation}
where the implicit constant above is independent of $A\geq 1$.
\end{lem}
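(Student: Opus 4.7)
The plan is to differentiate $J_A(s)$ using the evolution equation \eqref{Epsiloneq} for $\epsilon$, which simplifies under cylindrical symmetry (Proposition \ref{propmodpar}(ii)) since $z_2\equiv 0$. The regularity \eqref{regcond} of $F\chi_A$ together with Assumption \ref{Assm1} justifies performing the computation first on the smooth approximants guaranteed by Assumption \ref{Assm1} and then passing to the limit, yielding
\begin{equation*}
\frac{d}{ds}J_A=\int_{\mathbb{R}^2}(\partial_s\epsilon)\,F\chi_A\,\mathrm{d}x.
\end{equation*}
I would substitute the six resulting terms of \eqref{Epsiloneq} and process each separately, collecting leading contributions and bundling all lower-order pieces into $R(\epsilon,A)$.

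The most delicate term is $\int\partial_{x_1}(L\epsilon)F\chi_A\,\mathrm{d}x$. The idea is to integrate by parts in $x_1$, use $\partial_{x_1}F=\Lambda Q$, and then exploit the self-adjointness of $L$ together with Proposition \ref{scalingprop}(i), $L(\Lambda Q)=-Q$, to extract the main contribution $\int\epsilon\,Q\,\mathrm{d}x$. The nonlocal component of $L$ produces a commutator $[|\nabla_x|,\chi_A]\Lambda Q$, whose $L^2$-norm is $O(A^{-1})$ by Proposition \ref{commutatorestim1}, and the $\chi_A'$-boundary piece $\int L\epsilon\cdot F\chi_A'$ is controlled using the pointwise decay of $F$ from Proposition \ref{PropdecayF} on the support $A\leq |x_1|\leq 2A$, yielding the $A^{-1/2}\|\epsilon\|_{L^2}$-contribution in \eqref{remaindereq}.

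For the two $\frac{1}{\lambda}\tfrac{d\lambda}{ds}$ contributions I would handle the following. First, $\int \Lambda Q\,F\chi_A=\tfrac{1}{2}\int\partial_{x_1}(F^2)\chi_A=-\tfrac{1}{2}\int F^2\chi_A'\,\mathrm{d}x$; using $F\to\int_{\mathbb{R}}\Lambda Q(z,x_2)\,\mathrm{d}z$ as $x_1\to+\infty$ and $F\to 0$ as $x_1\to-\infty$ (Proposition \ref{PropdecayF}), this isolates the constant $\tfrac{1}{2}\int_{\mathbb{R}}\bigl(\int_{\mathbb{R}}\Lambda Q(x_1,x_2)\,\mathrm{d}x_1\bigr)^2\,\mathrm{d}x_2$ with a remainder of order $A^{-\gamma_1}$ for any $\gamma_1\in(0,1)$, arising from the difference between $F$ and its limit on the strip $x_1\in[A,2A]$. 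Second, $\int \Lambda\epsilon\cdot F\chi_A$: write $\Lambda\epsilon=\epsilon+x\cdot\nabla\epsilon$ and integrate by parts in both spatial variables to obtain $-J_A-\int\epsilon\,x_1\Lambda Q\chi_A-\int\epsilon\,x_1F\chi_A'-\int\epsilon\,x_2\partial_{x_2}F\chi_A$. The first two error pieces are bounded by $\|\epsilon\|_{L^2}$ and by $A^{1/2}\|\epsilon\|_{L^2([A,\infty)\times\mathbb{R})}$, respectively, while the $x_2\partial_{x_2}F\chi_A$ piece appears unchanged in \eqref{remaindereq}.

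The remaining three terms are handled as follows. For $(\frac{1}{\lambda}\tfrac{dz_1}{ds}-1)\int\partial_{x_1}\epsilon\,F\chi_A$, integration by parts gives $-(\frac{1}{\lambda}\tfrac{dz_1}{ds}-1)\int\epsilon\Lambda Q\chi_A$; splitting $\Lambda Q=Q+x\cdot\nabla Q$, the first piece combines with the $+\int\epsilon Q$ from the $L$-step to produce the coefficient $1-(\frac{1}{\lambda}\tfrac{dz_1}{ds}-1)$, while $\int\epsilon\,x\cdot\nabla Q$ is absorbed into the remainder using $\|x\cdot\nabla Q\|_{L^2}\lesssim 1$. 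The term $(\frac{1}{\lambda}\tfrac{dz_1}{ds}-1)\int\partial_{x_1}Q\,F\chi_A$ vanishes to leading order because $\int Q\Lambda Q=0$ by Proposition \ref{scalingprop}(ii), leaving only $\chi_A'$-boundary errors of size $O(A^{-1})$; and $-\tfrac{1}{2}\int\partial_{x_1}(\epsilon^2)F\chi_A$ integrates by parts into $\|\epsilon\|_{L^2}^2$-type remainders. The principal technical obstacle will be to package the interplay between the anisotropic decay of $F$ on $x_1<0$ versus $x_1>0$ (Proposition \ref{PropdecayF}) and the commutator estimates for $|\nabla_x|$ against $\chi_A$, so that every error assembles precisely into the form \eqref{remaindereq}.
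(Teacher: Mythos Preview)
Your proposal is correct and follows essentially the same route as the paper's proof: differentiate $J_A$ via \eqref{Epsiloneq} with $z_2=0$, integrate by parts in $x_1$ on each term, use $\partial_{x_1}F=\Lambda Q$, the self-adjointness of $L$ with $L(\Lambda Q)=-Q$ and $Q\perp\Lambda Q$, and Proposition~\ref{commutatorestim1} for the commutator $[|\nabla_x|,\chi_A]$. The only cosmetic difference is in extracting the constant $\tfrac12\int(\int\Lambda Q\,\mathrm{d}x_1)^2\,\mathrm{d}x_2$: you integrate by parts to land on $-\tfrac12\int F^2\partial_{x_1}\chi_A$ and compare $F$ to its limit on $\{A\le|x_1|\le 2A\}$, whereas the paper writes $\tfrac12\int\partial_{x_1}(F^2)\chi_A=\tfrac12\int\partial_{x_1}(F^2)+\tfrac12\int\partial_{x_1}(F^2)(\chi_A-1)$ and bounds the correction dyadically via $\||x|^{\gamma_1}\Lambda Q\|_{L^1}$; both yield the $A^{-\gamma_1}$ error. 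One small omission: in the $(\tfrac{1}{\lambda}\tfrac{dz_1}{ds}-1)\int\partial_{x_1}\epsilon\,F\chi_A$ step, integration by parts also produces a $\chi_A'$-boundary term $\int\epsilon\,F\partial_{x_1}\chi_A$, which contributes an additional $O(A^{-1/2}\|\epsilon\|_{L^2})$ to the remainder---this is already absorbed in \eqref{remaindereq}.
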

\begin{proof}
The proof is similar to that given in \cite[Lemma 6.1]{FarahHolmerRoudenko2019}. Let us highlight the differences that arise from working with the polynomial decay of $Q$ and the fact that $L$ has a non-local operator. Since the equation in Lemma \ref{EquationEpsi} with $z_2=0$ is defined in the distributional sense, $F\chi_A\in H^{\infty}(\mathbb{R}^2)$ with $|(x_1,x_2)| F\chi_A\in L^2(\mathbb{R}^2)$ (due to Proposition \ref{PropdecayF}), we can multiply \eqref{Epsiloneq} by $F\chi_A$ and integrate in space to deduce
 \begin{equation}
 \begin{aligned}
 \frac{d}{ds}J_A(s)=&\int_{\mathbb{R}^2} \Big(\partial_{x_1}(L\epsilon)+\frac{1}{\lambda}\frac{d\lambda}{ds}\Lambda \epsilon+\Big(\frac{1}{\lambda}\frac{d z_1}{ds}-1\Big)\partial_{x_1}\epsilon\Big)F\chi_A\, \mathrm{d}x\\
 &+\int_{\mathbb{R}^2} \Big(\frac{1}{\lambda}\frac{d\lambda}{ds}\Lambda Q+\Big(\frac{1}{\lambda}\frac{d z_1}{ds}-1\Big)\partial_{x_1}Q\Big)F\chi_A\, \mathrm{d}x-\frac{1}{2}\int_{\mathbb{R}^2} \partial_{x_1}(\epsilon^2) F\chi_A\, \mathrm{d}x\\
 =:& \mathcal{I}+\mathcal{II}+\mathcal{III}.
 \end{aligned}    
 \end{equation}
 Integration by parts yields
\begin{equation*}
\begin{aligned}
|\mathcal{III}|\leq  \frac{1}{2}\int_{\mathbb{R}^2} \epsilon^2 |\partial_{x_1} F||\chi_A|\, \mathrm{d}x+\frac{1}{2A}\int_{\mathbb{R}^2} \epsilon^2 |F||\chi'_A|\, \mathrm{d}x\lesssim &  \Big(\|\partial_{x_1} F\|_{L^{\infty}}\|\chi\|_{L^{\infty}}+\frac{\|F\|_{L^{\infty}}\|\chi'\|_{L^{\infty}}}{A}\Big)\|\epsilon\|_{L^2}^2\\
\lesssim & \Big(1+\frac{1}{A}\Big)\|\epsilon\|_{L^2}^2.
\end{aligned}   
\end{equation*}
We divide the estimate of $\mathcal{II}$ as follows
\begin{equation}
\begin{aligned}
\mathcal{II}=&\frac{1}{\lambda}\frac{d\lambda}{ds} \int_{\mathbb{R}^2} \Lambda Q F\chi_A\, \mathrm{d}x +\Big(\frac{1}{\lambda}\frac{d z_1}{ds}-1\Big)\int_{\mathbb{R}^2} \partial_{x_1}Q F\chi_A\, \mathrm{d}x \\
=:&\frac{1}{\lambda}\frac{d\lambda}{ds}\mathcal{II}_1+\Big(\frac{1}{\lambda}\frac{d z_1}{ds}-1\Big)\mathcal{II}_2.
\end{aligned}    
\end{equation}
Using that $\partial_{x_1}F=\Lambda Q$, we write
\begin{equation*}
\begin{aligned}
\mathcal{II}_1=\frac{1}{2}\int_{\mathbb{R}^2} \partial_{x_1}(F^2)\chi_A\, \mathrm{d}x =&\frac{1}{2}\int_{\mathbb{R}^2}  \partial_{x_1}(F^2)\, \mathrm{d}x+\frac{1}{2}\int_{\mathbb{R}^2}  \partial_{x_1}(F^2)(\chi_A-1)\, \mathrm{d}x\\
   =&\frac{1}{2}\int_{\mathbb{R}}  \Big(\int_{\mathbb{R}}  \Lambda Q(x_1,x_2)\, \mathrm{d}x_1\Big)^2\, \mathrm{d}x_2 +\frac{1}{2}\int_{\mathbb{R}^2}  \partial_{x_1}(F^2)(\chi_A-1)\, \mathrm{d}x.
\end{aligned}
\end{equation*}
Given that $\partial_{x_1}(F^2)=2(\Lambda Q)F$, and that $\chi_A-1$ is supported in the region $\{(x_1,x_2): |x_1|\geq A\}$, we chose $\gamma_1\in (0,1)$ fixed to obtain 
\begin{equation*}
\begin{aligned}
\Big|\int_{\mathbb{R}^2}  \partial_{x_1}(F^2)(\chi_A-1)\, \mathrm{d}x\Big| \lesssim & \sum_{k=0}^{\infty} \frac{2^{-\gamma_1 k}}{A^{\gamma_1}}\, \, \int_{2^{k}A\leq |x_1|\leq 2^{k+1}A} |x_1|^{\gamma_1}|\Lambda Q||F|\, \mathrm{d}x  \\
\lesssim& \sum_{k=0}^{\infty} \frac{2^{-\gamma_1 k}}{A^{\gamma_1}}\, \, \|F\|_{L^{\infty}}\||x|^{\gamma_1}\Lambda Q\|_{L^1}\lesssim  \frac{1}{A^{\gamma_1}},
\end{aligned}    
\end{equation*}
where we have used that $|\Lambda Q(x)|\lesssim \langle x\rangle^{-3}$, $x\in \mathbb{R}^n$, and thus $|x_1|^{\gamma_1}\Lambda Q\in L^1(\mathbb{R}^2)$, whenever $\gamma_1\in (0,1)$. This completes the study of $\mathcal{II}_1$. Next, we deal with $\mathcal{II}_2$. Combining integration by parts and the fact that Proposition \ref{scalingprop} establishes that $Q\perp \Lambda Q$,  we get
\begin{equation*}
\begin{aligned}
   \mathcal{II}_2=-\int_{\mathbb{R}^2}  Q \Lambda Q \chi_A\, \mathrm{d}x -\frac{1}{A}\int_{\mathbb{R}^2}  Q  F \chi'_A\, \mathrm{d}x= -\int_{\mathbb{R}^2}  Q \Lambda Q (\chi_A-1) \, \mathrm{d}x -\frac{1}{A}\int_{\mathbb{R}^2} Q  F \chi'_A\, \mathrm{d}x.
\end{aligned}    
\end{equation*}
The mean value inequality and the fact that $\chi(0)=1$ yield
\begin{equation}\label{phidiffere}
\begin{aligned}
 |\chi_A(x_1)-1|\leq \frac{\|\chi'\|_{L^{\infty}}}{A}|x_1|.   
\end{aligned}    
\end{equation}
It follows from H\"older's inequality that
\begin{equation*}
 \begin{aligned}
|\mathcal{II}_2|\lesssim \frac{1}{A}\big(\||x_1|Q\|_{L^2}\|\Lambda Q\|_{L^2} +\|\chi'\|_{L^{\infty}}\|F\|_{L^{\infty}}\|Q\|_{L^1} \big)\lesssim \frac{1}{A}.
 \end{aligned}   
\end{equation*}
The study of $\mathcal{II}$ is complete. Let us now focus on $\mathcal{I}$. By integration by parts, we  write
\begin{equation*}
 \begin{aligned}
\mathcal{I}=&-\Big(\int_{\mathbb{R}^2}  (L\epsilon ) \Lambda Q \chi_A \, \mathrm{d}x+\frac{1}{A}\int_{\mathbb{R}^2}  (L \epsilon)F\chi'_A\, \mathrm{d}x\Big)+\frac{1}{\lambda}\frac{d\lambda}{ds}\int_{\mathbb{R}^2}  \Lambda \epsilon F \chi_A\, \mathrm{d}x\\
&-\Big(\frac{1}{\lambda}\frac{dz_1}{ds}-1\Big)\Big(\int_{\mathbb{R}^2}  \epsilon \Lambda Q \chi_A\, \mathrm{d}x+\frac{1}{A}\int_{\mathbb{R}^2}  \epsilon F\chi'_A\, \mathrm{d}x\Big)\\
=:&\mathcal{I}_1+\frac{1}{\lambda}\frac{d\lambda}{ds}\mathcal{I}_2-\Big(\frac{1}{\lambda}\frac{dz_1}{ds}-1\Big)\mathcal{I}_3.
 \end{aligned}   
\end{equation*}
At this point, we will comment on the estimation of $\mathcal{I}_2$ and $\mathcal{I}_3$ due to their similarity to \cite{FarahHolmerRoudenko2019} and previous arguments. From the definition of $\Lambda$, it is seen that
\begin{equation*}
 \begin{aligned}
 \mathcal{I}_3=&\int_{\mathbb{R}^2}  \epsilon Q\, \mathrm{d}x+\int_{\mathbb{R}^2}  \epsilon Q(\chi_A-1)\, \mathrm{d}x+\int_{\mathbb{R}^2}  \epsilon \big(x\cdot \nabla Q\big)\chi_A \, \mathrm{d}x+\frac{1}{A}\int_{\mathbb{R}^2}  \epsilon F\chi'_A\, \mathrm{d}x\\
 =&\int_{\mathbb{R}^2}  \epsilon Q\, \mathrm{d}x+O\Big(\big(1+\frac{1}{A^{\frac{1}{2}}}+\frac{1}{A}\big)\|\epsilon\|_{L^2}\Big)
 \end{aligned}   
\end{equation*}
On the other hand, we have
\begin{equation*}
\begin{aligned}
 \mathcal{I}_2=\int_{\mathbb{R}^2}  \Lambda \epsilon F\chi_A\, \mathrm{d}x=&-\int_{\mathbb{R}^2}  \epsilon \Lambda (F\chi_A)\, \mathrm{d}x\\
 =&-J_A-\int_{\mathbb{R}^2}  \epsilon \, x_1  \Lambda Q \chi_A\, \mathrm{d}x-\int_{\mathbb{R}^2}  \epsilon\,  x_2 \partial_{x_2}F\chi_A\, \mathrm{d}x-\frac{1}{A}\int_{\mathbb{R}^2}   \epsilon \, x_1 F \chi_A'\, \mathrm{d}x\\
 =&-J_A-\int_{\mathbb{R}^2}  \epsilon\,  x_2 \partial_{x_2}F\chi_A\, \mathrm{d}x+O(\|\epsilon\|_{L^2})+O(A^{\frac{1}{2}}\|\epsilon\|_{L^2([A,\infty)\times \mathbb{R})}).
\end{aligned}    
\end{equation*}
Finally, we consider $\mathcal{I}_1$. Since $L$ is self-adjoint and $L(\Lambda Q)=-Q$, we deduce
\begin{equation*}
\begin{aligned}
\mathcal{I}_1=&-\int_{\mathbb{R}^2}  \epsilon L(\Lambda Q\chi_A)\, \mathrm{d}x-\frac{1}{A}\int_{\mathbb{R}^2}  \epsilon L(F\chi_A')\, \mathrm{d}x\\
=&\int_{\mathbb{R}^2}  \epsilon  Q\, \mathrm{d}x+\int_{\mathbb{R}^2}  \epsilon  Q(\chi_A-1)\, \mathrm{d}x-\int_{\mathbb{R}^2}  \epsilon \left[L;\chi_A\right]\Lambda Q\, \mathrm{d}x-\frac{1}{A}\int_{\mathbb{R}^2}  \epsilon |\nabla_x|(F\chi_A')\, \mathrm{d}x\\
&-\frac{1}{A}\int_{\mathbb{R}^2}  \epsilon F\chi_A'\, \mathrm{d}x+\frac{1}{A}\int_{\mathbb{R}^2}  \epsilon Q F \chi'_A\, \mathrm{d}x\\
=:&\int_{\mathbb{R}^2}  \epsilon  Q\, \mathrm{d}x+\sum_{j=1}^5\mathcal{I}_{1,j}.
\end{aligned}    
\end{equation*}
Before we proceed with the study of $\mathcal{I}_1$, we observe that
\begin{equation*}
\begin{aligned}
\left[L;f\right]g=\left[|\nabla_x|;f\right]g.     
\end{aligned}    
\end{equation*}
Additionally, given $s\in (0,1)$, by using the integral representation of the fractional Laplacian (see, \cite[Lemma 3.2]{DiPalatucciValdinoci2012}), one has $|\nabla_x|^{2s}(\chi_A)=\frac{1}{A^{2s}}\big(|\nabla|^{2s}\chi\big)_A$, and it follows
\begin{equation*}
 \begin{aligned}
|\big(|\nabla|^{2s}\chi\big)(x_1,x_2)|= &c\Big| \lim_{r\to 0^{+}}\int_{|(z_1,z_2)|\geq r}\frac{\chi(x_1+z_1)+\chi(x_1-z_1)-2\chi(x_1)}{|(z_1,z_2)|^{2+2s}}\, dz_1 dz_2 \Big|\\
\lesssim & \|\chi''\|_{L^{\infty}}\int_{|(z_1,z_2)|\leq 1} \frac{|z_1|^2}{|(z_1,z_2)|^{2+2s}}\, dz_1 dz_2 \\
&+\|\chi\|_{L^{\infty}}\int_{|(z_1,z_2)|\geq 1} \frac{1}{|(z_1,z_2)|^{2+2s}}\, dz_1 dz_2 \\
\lesssim &\|\chi''\|_{L^{\infty}}+\|\chi\|_{L^{\infty}},
 \end{aligned}   
\end{equation*}
where we used a second order Taylor expansion to deduce $|\chi(x_1+z_1)+\chi(x_1-z_1)-2\chi(x_1)|\lesssim \|\chi''\|_{L^{\infty}}|z_1|^2$. Thus, we arrive at
\begin{equation}\label{Fderweight}
 \|\,|\nabla_x|^{2s}(\chi_A)\|_{L^{\infty}}\lesssim \frac{1}{A^{2s}}.  
\end{equation}
Now, we can control the factors $\mathcal{I}_{1,j}$, $j=1,\dots,5$. Using Proposition \ref{commutatorestim1}, the fact that $\mathcal{R}_1$ is a bounded operator in $L^2$ and \eqref{Fderweight}, we deduce
\begin{equation*}
\begin{aligned}
|\mathcal{I}_{1,2}|\leq & \|\left[|\nabla_x|;\chi_A\right]\Lambda Q\|_{L^2}\|\epsilon\|_{L^2}\\
\lesssim &\big(\|\left[|\nabla_x|;\chi_A\right]\Lambda Q-\frac{1}{A}\chi_A'\mathcal{R}_1 \Lambda Q \|_{L^2}+\frac{1}{A}\|\chi_A'\mathcal{R}_1 \Lambda Q \|_{L^2}\big)\|\epsilon\|_{L^2}\\
\lesssim &\big( \|\,|\nabla_x|(\chi_A)\|_{L^{\infty}}\|\Lambda Q\|_{L^2}+\frac{1}{A}\|\Lambda Q\|_{L^2}\big)\|\epsilon\|_{L^2}\\
\lesssim & \frac{1}{A}\|\epsilon\|_{L^2}.
\end{aligned}    
\end{equation*}
Applying a similar estimate to \eqref{Jesteq} and \eqref{phidiffere}, together with H"older's inequality, it follows that 
\begin{equation*}
\begin{aligned}
|\mathcal{I}_{1,1}|&+|\mathcal{I}_{1,3}|+|\mathcal{I}_{1,4}|+|\mathcal{I}_{1,5}|\\
&\lesssim \frac{\|\epsilon\|_{L^2}\|x_1 Q\|_{L^2}}{A}+\frac{\|\epsilon\|_{L^2}\|F\chi_A'\|_{L^2}}{A}+\frac{\|\epsilon\|_{L^2}\|Q\|_{L^2}\|F\chi_A'\|_{L^{\infty}}}{A}\\
&\lesssim \Big(\frac{1}{A^{\frac{1}{2}}}+\frac{1}{A}\Big)\|\epsilon\|_{L^2}.
\end{aligned}    
\end{equation*}
This completes the study of $\mathcal{I}_1$ and the proof of the lemma.
\end{proof}
\subsection{Control of Parameters} 
In this part, we obtain estimates for $\epsilon$ in terms of the initial condition $\epsilon(x,0)$. 

Using our mass and energy relations in Lemma \ref{Energyconsrlemma}, we can follow the arguments in \cite[Lemmas 10 and 11]{MartelMerle2001} and \cite[Lemma 6.3 and 6.4]{FarahHolmerRoudenko2019} to deduce the following two lemmas:
\begin{lem}\label{lemmacontrolparam1}
Let $\epsilon_0(x)=\epsilon(x,0)$, $E_0=E[Q+\epsilon_0]$. Recall the definition  $\mathcal{M}_0$ in \eqref{Masszero}. Then it follows
\begin{equation*}
 \Big|\mathcal{M}_0-2\int_{\mathbb{R}^2} \epsilon_0 Q \mathrm{d}x\Big|+\Big|E_0+\int_{\mathbb{R}^2} \epsilon_0 Q\, \mathrm{d}x\Big|+\Big|E_0+\frac{1}{2}\mathcal{M}_0\Big|\lesssim \|\epsilon_0\|_{H^{\frac{1}{2}}}^2+\|\epsilon_0\|_{H^{\frac{1}{2}}}^3,
\end{equation*}
where the implicit constant is independent of $\int_{\mathbb{R}^2} \epsilon_0 Q\, \mathrm{d}x$, $E_0$, $\epsilon_0$.
\end{lem}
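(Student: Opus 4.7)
The plan is to prove the three quantities in a single pass by expressing each as a controlled combination of lower-order terms in $\epsilon_0$, then reading off the bound from the energy linearization already recorded in Lemma \ref{Energyconsrlemma}.

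First I would handle the mass term, which is essentially tautological: since $\mathcal{M}_0 = 2\int_{\mathbb{R}^2}Q\epsilon_0\,\mathrm{d}x + \int_{\mathbb{R}^2}\epsilon_0^2\,\mathrm{d}x$, one immediately obtains
\begin{equation*}
\Big|\mathcal{M}_0 - 2\int_{\mathbb{R}^2}\epsilon_0 Q\,\mathrm{d}x\Big| = \|\epsilon_0\|_{L^2}^2 \leq \|\epsilon_0\|_{H^{1/2}}^2.
\end{equation*}

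Next, I would invoke the energy linearization \eqref{EnergyLine1} at $s=0$, which rearranges as
\begin{equation*}
E_0 + \int_{\mathbb{R}^2}Q\epsilon_0\,\mathrm{d}x = \frac{1}{2}(L\epsilon_0,\epsilon_0) - \frac{1}{2}\|\epsilon_0\|_{L^2}^2 - \frac{1}{6}\int_{\mathbb{R}^2}\epsilon_0^3\,\mathrm{d}x.
\end{equation*}
Writing $L=|\nabla_x|+1-Q$, the quadratic part $\frac{1}{2}(L\epsilon_0,\epsilon_0) - \frac{1}{2}\|\epsilon_0\|_{L^2}^2 = \frac{1}{2}\||\nabla_x|^{1/2}\epsilon_0\|_{L^2}^2 - \frac{1}{2}\int_{\mathbb{R}^2}Q\epsilon_0^2\,\mathrm{d}x$ is controlled by $\|\epsilon_0\|_{H^{1/2}}^2$, using $Q\in L^\infty(\mathbb{R}^2)$ for the second piece. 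The cubic part is estimated via the fractional Gagliardo--Nirenberg inequality \eqref{EnergyLine2}, giving $|\int\epsilon_0^3\,\mathrm{d}x|\lesssim \||\nabla_x|^{1/2}\epsilon_0\|_{L^2}^2\|\epsilon_0\|_{L^2}\leq \|\epsilon_0\|_{H^{1/2}}^3$. Combining these yields
\begin{equation*}
\Big|E_0 + \int_{\mathbb{R}^2}\epsilon_0 Q\,\mathrm{d}x\Big| \lesssim \|\epsilon_0\|_{H^{1/2}}^2 + \|\epsilon_0\|_{H^{1/2}}^3.
\end{equation*}

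For the final quantity, I would simply observe the algebraic identity
\begin{equation*}
E_0 + \tfrac{1}{2}\mathcal{M}_0 = \Big(E_0 + \int_{\mathbb{R}^2}Q\epsilon_0\,\mathrm{d}x\Big) + \tfrac{1}{2}\Big(\mathcal{M}_0 - 2\int_{\mathbb{R}^2}Q\epsilon_0\,\mathrm{d}x\Big),
\end{equation*}
so the required bound follows from the triangle inequality and the two previous estimates. Alternatively, substituting the linearization directly gives $E_0 + \frac{1}{2}\mathcal{M}_0 = \frac{1}{2}(L\epsilon_0,\epsilon_0) - \frac{1}{6}\int\epsilon_0^3\,\mathrm{d}x$, which admits the same bound by the previous arguments.

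There is no genuine obstacle here: every step is a direct consequence of the energy expansion in Lemma \ref{Energyconsrlemma} together with the boundedness of $Q$ and the fractional Gagliardo--Nirenberg inequality. The only mild point of care is tracking that the implicit constants depend only on $\|Q\|_{L^\infty}$ and the Gagliardo--Nirenberg constant, so they are independent of $\epsilon_0$, $E_0$, and $\int Q\epsilon_0\,\mathrm{d}x$, as the statement demands.
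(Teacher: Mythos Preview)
Your proof is correct and follows exactly the approach the paper intends: the paper does not spell out the argument but simply says to use the mass and energy relations in Lemma \ref{Energyconsrlemma} and follow \cite[Lemmas 10 and 11]{MartelMerle2001} and \cite[Lemma 6.3 and 6.4]{FarahHolmerRoudenko2019}, which is precisely the computation you carried out. Your handling of the three terms via the definition of $\mathcal{M}_0$, the energy linearization \eqref{EnergyLine1}, and the Gagliardo--Nirenberg estimate \eqref{EnergyLine2} is the standard route and matches the cited references.
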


\begin{lem}\label{boundenessL}
There exists $\omega_1>0$ such that if $\|\epsilon(s)\|_{H^{\frac{1}{2}}}\leq \omega$, $|\lambda(s)-1|<\omega$, and $\epsilon(s)\perp\{\partial_{x_1}Q,\partial_{x_2}Q,\psi_0\}$ for all $s\geq 0$, where $0<\omega<\omega_1$, then
\begin{equation}\label{eqH1bound}
 \begin{aligned}
  (L\epsilon(s),\epsilon(s))\leq \|\epsilon(s)\|_{H^{\frac{1}{2}}}^2\lesssim    \|\epsilon_0\|_{H^{\frac{1}{2}}}^2+\|\epsilon_0\|_{H^{\frac{1}{2}}}^3+\omega \Big|\int_{\mathbb{R}^2} \epsilon_0 Q\, \mathrm{d}x\Big|,   
 \end{aligned}   
\end{equation}
where the implicit constant is independent of $\epsilon$, $\omega$, and $Q$.
\end{lem}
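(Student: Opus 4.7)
The plan is to combine the coercivity estimate \eqref{Coercond} for $L$ with the conservation laws from Lemma~\ref{Energyconsrlemma} and the control on the initial data from Lemma~\ref{lemmacontrolparam1}, closing the estimate by absorbing a cubic term via the Gagliardo--Nirenberg inequality~\eqref{G_N}.

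First, the leftmost inequality in \eqref{eqH1bound} is immediate: since $L=|\nabla_x|+1-Q$ and $Q>0$, one has
\begin{equation*}
(L\epsilon,\epsilon)=\||\nabla_x|^{1/2}\epsilon\|_{L^2}^2+\|\epsilon\|_{L^2}^2-\int_{\mathbb{R}^2}Q\epsilon^2\,\mathrm{d}x\leq \|\epsilon\|_{H^{1/2}}^2.
\end{equation*}
The core of the proof is the reverse direction. Using the linearized energy identity \eqref{EnergyLine1} I would rewrite $(L\epsilon(s),\epsilon(s))$ as
\begin{equation*}
(L\epsilon(s),\epsilon(s))=2E[Q+\epsilon(s)]+\Big(2\int Q\epsilon(s)\,\mathrm{d}x+\int\epsilon(s)^2\,\mathrm{d}x\Big)+\frac{1}{3}\int\epsilon(s)^3\,\mathrm{d}x.
\end{equation*}
The middle parenthesis is exactly the conserved mass $\mathcal{M}[\epsilon(s)]=\mathcal{M}_0$, and by \eqref{energyscalingrel} one has $E[Q+\epsilon(s)]=\lambda(s)E_0$. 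Hence
\begin{equation*}
(L\epsilon(s),\epsilon(s))=2\lambda(s)E_0+\mathcal{M}_0+\frac{1}{3}\int\epsilon(s)^3\,\mathrm{d}x.
\end{equation*}

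Next, I would invoke Lemma~\ref{lemmacontrolparam1} in the form $|2E_0+\mathcal{M}_0|\lesssim \|\epsilon_0\|_{H^{1/2}}^2+\|\epsilon_0\|_{H^{1/2}}^3$ and $|E_0|\lesssim |\int\epsilon_0 Q\,\mathrm{d}x|+\|\epsilon_0\|_{H^{1/2}}^2+\|\epsilon_0\|_{H^{1/2}}^3$. Writing $2\lambda(s)E_0+\mathcal{M}_0=2(\lambda(s)-1)E_0+(2E_0+\mathcal{M}_0)$ and using the modulation hypothesis $|\lambda(s)-1|<\omega$ yields
\begin{equation*}
|2\lambda(s)E_0+\mathcal{M}_0|\lesssim \omega\Big|\int_{\mathbb{R}^2}\epsilon_0 Q\,\mathrm{d}x\Big|+\|\epsilon_0\|_{H^{1/2}}^2+\|\epsilon_0\|_{H^{1/2}}^3,
\end{equation*}
where, since $\omega<\omega_1<1$, the factors of $\omega$ multiplying $\|\epsilon_0\|_{H^{1/2}}^2+\|\epsilon_0\|_{H^{1/2}}^3$ are harmlessly absorbed.

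The main (and only mildly delicate) step is the cubic term. By \eqref{EnergyLine2} and the smallness hypothesis $\|\epsilon(s)\|_{H^{1/2}}\leq \omega$,
\begin{equation*}
\Big|\tfrac{1}{3}\int\epsilon(s)^3\,\mathrm{d}x\Big|\lesssim \||\nabla_x|^{1/2}\epsilon(s)\|_{L^2}^2\|\epsilon(s)\|_{L^2}\leq C\,\omega\,\|\epsilon(s)\|_{H^{1/2}}^2.
\end{equation*}
Finally, applying the coercivity bound \eqref{Coercond}, valid under the orthogonality conditions $\epsilon(s)\perp\{\partial_{x_1}Q,\partial_{x_2}Q,\psi_0\}$, gives $c_0\|\epsilon(s)\|_{H^{1/2}}^2\leq (L\epsilon(s),\epsilon(s))$. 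Combining the above,
\begin{equation*}
(c_0-C\omega)\|\epsilon(s)\|_{H^{1/2}}^2\lesssim \omega\Big|\int_{\mathbb{R}^2}\epsilon_0 Q\,\mathrm{d}x\Big|+\|\epsilon_0\|_{H^{1/2}}^2+\|\epsilon_0\|_{H^{1/2}}^3,
\end{equation*}
and choosing $\omega_1$ so small that $C\omega_1<c_0/2$ lets us absorb the $C\omega\|\epsilon(s)\|_{H^{1/2}}^2$ term into the left-hand side, yielding \eqref{eqH1bound}. The only subtlety is ensuring that the implicit constants remain independent of $\epsilon$, $\omega$, and $Q$; this is automatic since the coercivity constant $c_0$ from Theorem~\ref{propL}, the Gagliardo--Nirenberg constant, and the constants in Lemma~\ref{lemmacontrolparam1} are all structural constants depending only on $Q$.
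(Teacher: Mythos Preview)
Your proof is correct and follows essentially the same approach as the paper, which defers to \cite[Lemmas 10 and 11]{MartelMerle2001} and \cite[Lemma 6.3 and 6.4]{FarahHolmerRoudenko2019}: rewrite $(L\epsilon,\epsilon)$ via the linearized energy identity \eqref{EnergyLine1} as $2\lambda(s)E_0+\mathcal{M}_0+\tfrac{1}{3}\int\epsilon^3$, control $2\lambda(s)E_0+\mathcal{M}_0$ using Lemma~\ref{lemmacontrolparam1} together with $|\lambda(s)-1|<\omega$, absorb the cubic term by Gagliardo--Nirenberg and smallness, and close with the coercivity \eqref{Coercond}. One small remark: in your final sentence you say the constants ``depend only on $Q$,'' which is the accurate statement; the paper's claim that the implicit constant is independent of $Q$ should be read as ``universal once $Q$ is fixed.''
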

\subsection{Proof of instability} \label{sectionInstab3d}
We follow the strategy used in \cite{MartelMerle2001,FarahHolmerRoudenko2019,FarahHolmerRoudenko2019KdV}. The novelty of this manuscript is the deduction of the monotonicity estimates in  Corollary \ref{propdecayIMPR}, with which we do not need to use point decay estimates as in previous works.

Let us first mention some preliminary consequences of our results in the previous subsections. For simplicity, we set $c=1$. For $n\in \mathbb{Z}^{+}$ to be chosen later,  we define
\begin{equation*}
    u_0^n=Q+\epsilon_0^n,
\end{equation*}
where
\begin{equation}\label{inicondtins}
    \epsilon_0^n=\frac{1}{n}(Q+a\psi_0),
\end{equation}
with $a\in \mathbb{R}$ be given by
\begin{equation*}
   a=-\frac{\int_{\mathbb{R}^2} \psi_0 Q \, \mathrm{d}x}{\|\psi_0\|_{L^2}^2}.
\end{equation*}
Then by Theorem \ref{propL}, we have that for every $n\in \mathbb{Z}^{+}$,
\begin{equation*}
    \epsilon_0^n\perp \{\partial_{x_1}Q,\partial_{x_2}Q,\psi_0\},
\end{equation*}
with $u_0^n$ being radial, and thus cylindrical symmetric. 

Let us assume \emph{by contradiction that $Q$ is stable}. From this assumption, the following holds:
\begin{itemize}
    \item[(i)] Let $0<\omega<\omega_1$, where $\omega_1$ is such that Lemma \ref{modparaL} holds. Taking $n$ large, the stability assumption of $Q$ implies that the corresponding solution $u^n(t)\in C([0,\infty);H^{\frac{1}{2}}(\mathbb{R}^2))$ of \eqref{E:HBO} with initial condition $u_0^n$ satisfies $u^n(t)\in U_{\omega}$, and there exists $C^1$ functions $\lambda^n(t)$, $z_1^n(t)$ such that if $\epsilon^n(t)$ is defined as in Definition \ref{modparamdef} with $z(t)=(z_1(t),0,0)$, it follows
\begin{equation*}
    \epsilon^n(t)\perp \{\partial_{x_1}Q,\partial_{x_2}Q,\psi_0\},
\end{equation*}
with $\lambda^n(0)=1$, and $z_1^n(0)=0$.  To simplify notation, \underline{we omit the index $n$ in what follows}. 

Moreover, rescaling time $t\mapsto s$ by $\frac{d s}{dt}=\frac{1}{\lambda^2}$, Lemma \ref{modparaL} implies that $\lambda(s)$, $z_1(s)$ are $C^1$ functions, and $\epsilon(s)$ satisfies the equation in Lemma \ref{EquationEpsi}
\item[(ii)] From Proposition \ref{propmodpar} and the fact that $u(t)\in U_{\omega}$, we have
\begin{equation}\label{smalleepsicond}
    \|\epsilon(s)\|_{H^{\frac{1}{2}}}\leq C_1\omega, \quad \text{ and }\quad |\lambda(s)-1|\leq C_1 \omega.
\end{equation}
Thus, taking $0<\omega<1$ small, we can assume
\begin{equation}\label{smalleepsicond2}
     \frac{1}{2}\leq  \lambda(s)\leq \frac{3}{2},
\end{equation}
for all $s\geq 0$. Moreover, by $\eqref{smalleepsicond}$, up to taking $\omega>0$ small, we can assume that Lemma \ref{boundenessL} holds. That is,
\begin{equation}\label{H1/2estimate}
 \|\epsilon(s)\|_{H^{\frac{1}{2}}}^2\lesssim    \|\epsilon_0\|_{H^{\frac{1}{2}}}^2+\|\epsilon_0\|_{H^{\frac{1}{2}}}^3+\omega \Big|\int_{\mathbb{R}^2} \epsilon_0 Q\, \mathrm{d}x\Big|,      
\end{equation}
for all $s\geq 0$.

\item[(iii)] By \eqref{boundparams}, if $\omega>0$ is sufficiently small
\begin{equation}\label{smalleepsicond2.1}
\begin{aligned}
  \Big|\frac{1}{\lambda}\frac{d \lambda}{ds}\Big|+ \Big|\frac{1}{\lambda}\frac{d z_1}{ds}-1\Big|\leq  C_2(C_1\omega+C_1^2\omega^2)\leq 2C_2 C_1 \omega.    
\end{aligned}
\end{equation}
Since $\frac{d z_1}{dt}=\frac{1}{\lambda^2}\frac{d z_1}{d s}$, we have
\begin{equation*}
    \frac{1-C_2C_1\omega}{\lambda}\leq \frac{d z_1}{dt}\leq \frac{1+C_2C_1\omega}{\lambda}.
\end{equation*}
Thus, taking $\omega>0$ small and using that $|\lambda(s)-1|\leq C_1 \omega$, we can assume
\begin{equation}\label{Smallnessdiff}
 \Big|\frac{1}{\lambda}\frac{d z_1}{ds}-1\Big|\leq \frac{1}{2},  
\end{equation}
and
\begin{equation}\label{smalleepsicond3}
    \frac{3}{4}\leq \frac{d z_1}{dt} \leq \frac{5}{4}.
\end{equation}
Then $z_1(t)$ is an increasing function and by mean value inequality
\begin{equation*}
    z_1(t_0)-z_1(t)\geq \frac{3}{4}(t_0-t)
\end{equation*}
for every $t_0,t\geq 0$ with $t\in [0,t_0]$. Since $z_1(0)=0$, it follows
\begin{equation*}
    z_1(t)\geq \frac{1}{2}t,
\end{equation*}
for all $t\geq 0$. 

\item[(iv)] Given that $\frac{ds}{dt}=\frac{1}{\lambda^2}$, we combine \eqref{boundparams}, \eqref{eqH1bound} and \eqref{smalleepsicond2} to obtain
\begin{equation}\label{newvesteq1}
\begin{aligned}
  \Big|\frac{1}{\lambda}\frac{d \lambda}{dt}\Big|+ \Big|\frac{1}{\lambda}\frac{d z_1}{dt}-\frac{1}{\lambda^2}\Big|=&\frac{1}{\lambda^2}\Big|\frac{1}{\lambda}\frac{d \lambda}{ds}\Big|+ \frac{1}{\lambda^2}\Big|\frac{1}{\lambda}\frac{d z_1}{ds}-1\Big|\\
  \lesssim &  \Big(\|\epsilon_0\|_{H^{\frac{1}{2}}}^2+\|\epsilon_0\|_{H^{\frac{1}{2}}}^3+ \omega\Big|\int_{\mathbb{R}^2} \epsilon_0 Q\, \mathrm{d}x\Big|\Big)^{\frac{1}{2}},
\end{aligned}
\end{equation}
and thus $\Big|\frac{1}{\lambda}\frac{d \lambda}{ds}\Big|$, $\Big|\frac{1}{\lambda}\frac{d z_1}{ds}-1\Big|$ also satisfy the same upper bound above.
\end{itemize}
\subsection{Monotonicity formulas}

Let us deduce some key monotonicity formulas. Consider
\begin{equation}\label{defieta}
 \eta(x_1,x_2,t)=   \lambda^{-1}\epsilon\Big(\frac{x_1-z_1}{\lambda},\frac{x_2}{\lambda},t\Big),
\end{equation}
that is
\begin{equation}\label{eqeta}
    \eta(x_1,x_2,t)=u(x_1,x_2,t)-Q_{\lambda,z_1}(x_1,x_2),
\end{equation}
where we use the notation
\begin{equation*}
   f_{\lambda,z_1}(x_1,x_2)=\lambda^{-1}f\Big(\frac{x_1-z_1}{\lambda},\frac{x_2}{\lambda}\Big). 
\end{equation*}
Rescaling time $t\mapsto s$ by $\frac{ds}{dt}=\frac{1}{\lambda^2}$, changing variables, and using \eqref{eqH1bound}, we deduce
\begin{equation}\label{eqH1boundeta}
 \begin{aligned}
  \|\eta(t) \|_{H^{\frac{1}{2}}}^2\lesssim  \Big(\|\epsilon_0\|_{H^{\frac{1}{2}}}^2+\|\epsilon_0\|_{H^{\frac{1}{2}}}^3+\omega \Big|\int_{\mathbb{R}^2} \epsilon_0 Q\, \mathrm{d}x\Big|\Big),   
 \end{aligned}   
\end{equation}
for all $t\geq 0$.

Next, we deduce the equation for $\eta$.  Using \eqref{eqeta}, the fact that $u$ solves \eqref{E:HBO} and that $Q$ solves \eqref{EQ:groundState} with $\alpha=1$ and $n=2$, it follows
\begin{equation}\label{eqforetatime}
\begin{aligned}
  \partial_t \eta=&\partial_{x_1}|\nabla_x|\eta+ \frac{1}{\lambda} \frac{d\lambda}{dt}(\Lambda Q)_{\lambda,z_1}+\Big(\frac{1}{\lambda}\frac{dz_1}{dt}-\frac{1}{\lambda^2}\Big)(\partial_{x_1}Q)_{\lambda,z_1} -\frac{1}{2}\partial_{x_1}\big(\eta^2+2\eta Q_{\lambda,z_1}\big).
\end{aligned}  
\end{equation}
Therefore, we note that equation \eqref{eqforetatime} in terms of the parameter $s\geq 0$ is given by
\begin{equation}\label{eqforeta}
\begin{aligned}
  \partial_s \eta =&\lambda^{2}\partial_{x_1}|\nabla_x|\eta+ \frac{1}{\lambda} \frac{d\lambda}{ds}(\Lambda Q)_{\lambda,z_1}+\Big(\frac{1}{\lambda}\frac{dz_1}{ds}-1\Big)(\partial_{x_1}Q)_{\lambda,z_1} -\frac{ \lambda^2}{2}\partial_{x_1}\big(\eta^2+2\eta Q_{\lambda,z_1}\big).
\end{aligned}  
\end{equation}
It is worth mentioning that the equations above can be justified in negative Sobolev spaces as $H^{-4}(\mathbb{R}^2)$. 
 
Let $M>0$ to be chosen later, $\gamma\in (\frac{1}{2}, \frac{3}{2}]$. We will work with the function \eqref{p2} with $\sigma=(1,0)$, and variable $\frac{x}{M}\in \mathbb{R}^2$. For clarity and simplicity in notation, and given that the context is now well established, we will adopt the following notation in this section
\begin{equation}\label{psidef}
   \varphi(x_1,x_2)=\int_{-\infty}^{\frac{x_1}{M}}\frac{dr}{\langle r  \rangle^{2\gamma}}, \quad (x_1,x_2)\in \mathbb{R}^2,
\end{equation}
and
\begin{equation*}
\phi(x_1,x_2)=\partial_{x_1}\varphi(x_1,x_2)=\frac{1}{M} \Big\langle \frac{x_1}{M}\Big\rangle^{-2\gamma}.  
\end{equation*}
Note that that $\phi$ above is related to the square of \eqref{p1}.

For $x_0, t_0>0$, and $t\in [0,t_0]$, we also consider
\begin{equation}
  \mathcal{J}_{x_0,t_0}(t):=\int_{\mathbb{R}^2} \eta^2(x_1,x_2,t)\varphi(x_1-z_1(t_0)+\frac{1}{2}(t_0-t)-x_0,x_2)\, \mathrm{d}x. 
\end{equation}
The functional $ \mathcal{J}_{x_0,t_0}(t)$ is useful to study the decay of the mass of the function $\eta$ to the right of the soliton, we refer to \cite{KenigMartel2009} and references therein.

Next, we state our first monotonicity result for the function $\eta$.  
\begin{lem}[Almost monotonicity]\label{almostmontlemmaeta}
Let $1<\gamma\leq \frac{3}{2}$. Consider $\omega>0$  be small such that \eqref{smalleepsicond}-\eqref{newvesteq1} hold for the functions  $\epsilon(t)$, $z_1(t)$, and $\lambda(t)$, and such that the conclusion of Lemma \ref{boundenessL} hold. Then, there exists $M>0$, sufficiently large and independent of $\omega$, such that, by taking $\omega>0$ smaller if necessary, there exists a constant $C=C(M)>0$ such that for all $x_0>0$ and $t_0,t\geq 0$ with $t\in [0,t_0]$, it follows
 \begin{equation*}
     \mathcal{J}_{x_0,t_0}(t_0)-\mathcal{J}_{x_0,t_0}(t)\leq 
     C \Big(\|\epsilon_0\|_{H^{\frac{1}{2}}}^2+\|\epsilon_0\|_{H^{\frac{1}{2}}}^3+\omega \Big|\int_{\mathbb{R}^2} \epsilon_0 Q\, \mathrm{d}x\Big|\Big)\bigg(\frac{1}{|x_0|^{2\gamma-2}}+\frac{1}{|x_0|^{1^{-}}}\bigg).
 \end{equation*}
\end{lem}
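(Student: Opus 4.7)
The plan is to differentiate $\mathcal{J}_{x_0,t_0}(t)$ in time, extract a strictly coercive principal part from the dispersive contribution via Corollary~\ref{Maincorollary}, and control all remaining terms as errors that are integrable in $x_0$. The bound then follows by time-integration from $t$ to $t_0$.

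I begin by writing the shifted weight as $\psi(x,t)=\varphi(y_1,x_2)$ with $y_1=x_1-z_1(t_0)+\tfrac12(t_0-t)-x_0$, and denote $\phi=\partial_{x_1}\psi$. The time derivative of $\psi$ produces the favourable contribution $-\tfrac12\int\eta^2\phi\,\mathrm{d}x$; inserting equation~\eqref{eqforetatime} for $\partial_t\eta$ and integrating by parts in the nonlinear piece $-\tfrac12\partial_{x_1}(\eta^2+2\eta Q_{\lambda,z_1})$ decomposes $2\int\eta\,\partial_t\eta\,\psi\,\mathrm{d}x$ into: a dispersive term $2\int\eta\,\partial_{x_1}|\nabla_x|\eta\,\psi\,\mathrm{d}x$; a cubic term $\tfrac23\int\eta^3\phi\,\mathrm{d}x$; a quadratic interaction $-\int\eta^2(\partial_{x_1}Q_{\lambda,z_1})\psi\,\mathrm{d}x+\int\eta^2 Q_{\lambda,z_1}\phi\,\mathrm{d}x$; and two modulation terms carrying the factors $\tfrac1\lambda\tfrac{d\lambda}{dt}$ and $\tfrac1\lambda\tfrac{dz_1}{dt}-\tfrac1{\lambda^2}$. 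Applying Corollary~\ref{Maincorollary} with $\alpha=1$, $n=2$, $\sigma=(1,0)$ (so that $c_1=\tfrac12$) gives
\begin{equation*}
2\int\eta\,\partial_{x_1}|\nabla_x|\eta\,\psi\,\mathrm{d}x\;\le\;-\int\big||\nabla_x|^{1/2}(\eta\sqrt\phi)\big|^2\,\mathrm{d}x+\tfrac{2c_2}{M}\int\eta^2\phi\,\mathrm{d}x,
\end{equation*}
and choosing $M$ large enough that $\tfrac{2c_2}{M}\le\tfrac14$, the net principal part is $-\int||\nabla_x|^{1/2}(\eta\sqrt\phi)|^2\,\mathrm{d}x-\tfrac14\int\eta^2\phi\,\mathrm{d}x$. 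The cubic term is estimated via~\eqref{G_N} and Young's inequality by $\lesssim\|\eta\|_{H^{1/2}}(\||\nabla_x|^{1/2}(\eta\sqrt\phi)\|_{L^2}^2+\|\eta\sqrt\phi\|_{L^2}^2)$, and by~\eqref{eqH1boundeta} (taking $\omega$ smaller if needed) it is absorbed into the coercive part.

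For the non-absorbable errors I exploit the geometric separation provided by~\eqref{smalleepsicond3}: $z_1(t_0)-z_1(t)\ge\tfrac34(t_0-t)$ implies that, with $a(t):=x_0+\tfrac14(t_0-t)\ge x_0$, one has $y_1\le-\tfrac12 a(t)$ throughout the soliton region $|x_1-z_1(t)|\le\tfrac12 a(t)$, and therefore $\psi\lesssim M^{2\gamma-1}a(t)^{-(2\gamma-1)}$ and $\phi\lesssim M^{2\gamma-1}a(t)^{-2\gamma}$ there. Splitting space into this region and its complement (where the polynomial decay $|\partial^{\beta}Q(x)|\lesssim\langle x\rangle^{-3-|\beta|}$ dominates) and using Cauchy--Schwarz together with the Sobolev embedding $H^{1/2}(\mathbb{R}^2)\hookrightarrow L^4(\mathbb{R}^2)$, each of the weighted $L^2$-norms $\|Q_{\lambda,z_1}\phi\|_{L^2}$, $\|(\partial_{x_1}Q_{\lambda,z_1})\psi\|_{L^2}$, $\|(\Lambda Q)_{\lambda,z_1}\psi\|_{L^2}$, $\|(\partial_{x_1}Q)_{\lambda,z_1}\psi\|_{L^2}$ is bounded schematically by $a(t)^{-(2\gamma-1)}+a(t)^{-2}$ (with implicit constant depending on $M$). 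Combining with~\eqref{newvesteq1}, which converts the modulation prefactors into $\lesssim\|\eta\|_{H^{1/2}}$, all error contributions are bounded pointwise in $t$ by $\|\eta\|_{H^{1/2}}^2\big(a(t)^{-(2\gamma-1)}+a(t)^{-2}\big)$.

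Finally, integrating in $s$ from $t$ to $t_0$ and using the elementary estimate $\int_t^{t_0} a(s)^{-p}\,\mathrm{d}s\lesssim x_0^{-(p-1)}$ (valid for $p>1$) converts $a(\cdot)^{-(2\gamma-1)}$ into $x_0^{-(2\gamma-2)}$ and $a(\cdot)^{-2}$ into $x_0^{-1}$, with an arbitrarily small loss in the exponent producing $x_0^{-(1-\varepsilon)}$ where a borderline integration requires it; replacing $\|\eta\|_{H^{1/2}}^2$ by the bound~\eqref{eqH1boundeta} yields the stated estimate. The main obstacle I anticipate is the careful treatment of the quadratic interaction $\int\eta^2 Q_{\lambda,z_1}\phi\,\mathrm{d}x$: since $H^{1/2}(\mathbb{R}^2)$ does not embed into $L^\infty$, one cannot simply bound it by $\|Q_{\lambda,z_1}\phi\|_{L^1}\|\eta\|_{L^\infty}^2$, and one must replace it by the $L^4$-Sobolev argument above, with $M$ tuned so that the constraint $M\gg c_2$ (needed to close the coercive step) does not inflate the prefactor $M^{2\gamma-1}$ beyond what the $a(t)$-decay can absorb.
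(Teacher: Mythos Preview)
Your proposal is correct and follows essentially the same strategy as the paper: differentiate $\mathcal{J}_{x_0,t_0}$, use Corollary~\ref{Maincorollary} for the dispersive piece, absorb the cubic term via $H^{1/2}\hookrightarrow L^4$, and control the remaining errors by splitting into a ``near-soliton'' region (where the shifted weight is small) and its complement (where $Q$ decays), then integrate in time. One minor difference: for the quadratic interactions $\int\eta^2 Q_{\lambda,z_1}\phi$ and $\int\eta^2(\partial_{x_1}Q_{\lambda,z_1})\psi$ the paper does not invoke the $L^4$ embedding you flag as the ``main obstacle''; it simply bounds $Q_{\lambda,z_1}\phi$ and $(\partial_{x_1}Q_{\lambda,z_1})\psi$ in $L^\infty$ on each region (small $\phi$ near the soliton, small $Q$ away from it) against $\|\eta\|_{L^2}^2$, which is somewhat simpler than your route though both are valid. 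Conversely, your use of $L^2$-norms on the far region yields a clean $a(t)^{-2}$ (hence $x_0^{-1}$) where the paper's pointwise argument incurs the $\varepsilon$-loss $a(t)^{-2^{-}}$ that produces the $x_0^{-1^{-}}$ in the statement.
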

\begin{proof}
By equation \eqref{eqforetatime}, we deduce
\begin{equation}\label{energyestimJ}
 \begin{aligned}
  \frac{d}{dt}\mathcal{J}_{x_0,t_0}(t)=& 2\int_{\mathbb{R}^2} \partial_t \eta \eta \, \varphi\, \mathrm{d}x -\frac{1}{2}\int_{\mathbb{R}^2} \eta^2 \phi\, \mathrm{d}x\\ 
  =&\underbrace{2 \int_{\mathbb{R}^2} \big(\partial_{x_1}|\nabla_x|\eta\big) \, \eta \, \varphi\, \mathrm{d}x -\frac{1}{2}\int_{\mathbb{R}^2} \eta^2 \phi\, \mathrm{d}x}_{\mathcal{A}_1} +\underbrace{\frac{2}{\lambda}\frac{d\lambda}{dt}\int_{\mathbb{R}^2} (\Lambda Q)_{\lambda,z_1} \, \eta \, \varphi \, \mathrm{d}x}_{\mathcal{A}_2}\\
  &+\underbrace{2\Big(\frac{1}{\lambda}\frac{d z_1}{dt}-\frac{1}{\lambda^2}\Big)\int_{\mathbb{R}^2} (\partial_{x_1}Q)_{\lambda,z_1}\, \eta\, \varphi \, \mathrm{d}x}_{\mathcal{A}_3} -\underbrace{\int_{\mathbb{R}^2} \partial_{x_1}\big(\eta^2+2\eta Q_{\lambda,z_1}\big)\eta \varphi\, \mathrm{d}x}_{\mathcal{A}_4}.
 \end{aligned}   
\end{equation}
Let us estimate the terms $\mathcal{A}_j$, $j=1,2,3,4$ above. Using the commutator estimate in  Corollary \ref{Maincorollary} (recall that $\phi=\partial_{x_1}\varphi$), there exists a universal constant $c>0$ such that
\begin{equation*}
\begin{aligned}
2\int_{\mathbb{R}^2} \big(\partial_{x_1}|\nabla_x|\eta \big) \, \eta \, \varphi\,  & \mathrm{d}x-\frac{1}{2}\int_{\mathbb{R}^2} \eta^2 \phi\, \mathrm{d}x\\
\leq & -\int_{\mathbb{R}^2} \Big(|\nabla_x|^{\frac{1}{2}}\big(\eta \sqrt{\phi}\big)\Big)^2\, \mathrm{d}x -\Big(\frac{1}{2}-\frac{c}{M}\Big)\int_{\mathbb{R}^2} \eta^2 \phi \, \mathrm{d}x.
\end{aligned}    
\end{equation*}
Hence, taking $M>4c$, we have
\begin{equation*}
\begin{aligned}
\mathcal{A}_1 \leq  -\int_{\mathbb{R}^2} \bigg( \Big(|\nabla_x|^{\frac{1}{2}}\big(\eta \sqrt{\phi}\big)\Big)^2  +\frac{1}{4} \eta^2 \phi\bigg) \, \mathrm{d}x.
\end{aligned}    
\end{equation*}
To estimate $\mathcal{A}_2$, we decompose $\mathbb{R}^2$ into the following sets
\begin{equation*}
\begin{aligned}
&\Omega_1:=\Big\{(x_1,x_2)\in \mathbb{R}^2: x_1-z_1(t)\leq \frac{1}{8}(t_0-t)+\frac{x_0}{2}\Big\},\\
&\Omega_2:=\Big\{(x_1,x_2)\in \mathbb{R}^2: x_1-z_1(t)\geq \frac{1}{8}(t_0-t)+\frac{x_0}{2}\Big\}.\\
\end{aligned}    
\end{equation*}
From the mean value inequality and \eqref{smalleepsicond3}, we have for $(x_1,x_2)\in \Omega_1$ that
\begin{equation*}
\begin{aligned}
x_1-z_1(t_0)+\frac{1}{2}(t_0-t)-x_0=&(x_1-z_1(t))-(z_1(t_0)-z(t))+\frac{1}{2}(t_0-t)-x_0\\
\leq & (x_1-z_1(t))-\frac{1}{4}(t_0-t)-x_0\\
\leq &-\frac{1}{8}(t_0-t)-\frac{x_0}{2}<0.
\end{aligned}    
\end{equation*}
Thus, given $(x_1,x_2)\in \Omega_1$, it follows
\begin{equation*}
\begin{aligned}
|\varphi(x_1-z_1(t_0)+\frac{1}{2}(t_0-t)-x_0,x_2)| \lesssim & \big| x_1-z_1(t_0)+\frac{1}{2}(t_0-t)-x_0 \big|^{-(2\gamma-1)}  \\
\lesssim & \Big| \frac{1}{8}(t_0-t)+\frac{x_0}{2}\Big|^{-(2\gamma-1)}.
\end{aligned}    
\end{equation*}
Consequently, from the previous estimates, \eqref{newvesteq1} and \eqref{eqH1bound}, we arrived at
\begin{equation}\label{A2estm1}
\begin{aligned}
\Big|\frac{2}{\lambda}\frac{d\lambda}{dt}&\int_{\Omega_1} (\Lambda Q)_{\lambda,z_1} \, \eta \, \varphi \, \mathrm{d}x\Big|\\
\lesssim & \Big| \frac{1}{\lambda}\frac{d\lambda}{dt}\Big| \Big|\frac{1}{8}(t_0-t)-\frac{x_0}{2}\Big|^{-(2\gamma-1)} \|(\Lambda Q)_{\lambda,z_1}\|_{L^2}\|\eta\|_{L^2}\\
\lesssim & \Big(\|\epsilon_0\|_{H^{\frac{1}{2}}}^2+\|\epsilon_0\|_{H^{\frac{1}{2}}}^3+\omega \Big|\int_{\mathbb{R}^2} \epsilon_0 Q\, \mathrm{d}x\Big|\Big)\Big|\frac{1}{8}(t_0-t)+\frac{x_0}{2}\Big|^{-(2\gamma-1)},
\end{aligned}    
\end{equation}
 where we have used that $\|(\Lambda Q)_{\lambda,z_1}\|_{L^2}\lesssim 1$ provided the decay properties of $Q$ and its derivatives (see \eqref{Qdecay} with $n=2$, $\alpha=1$). Now, when $(x_1,x_2)\in \Omega_2$, the properties of $Q$ imply
\begin{equation}\label{decayestimate}
\begin{aligned}
|(\Lambda Q)_{\lambda,z_1}(x_1,x_2)|\lesssim & \Big\langle \Big( \frac{x_1-z_1(t)}{\lambda},\frac{x_2}{\lambda}\Big) \Big\rangle^{-3}\\
\lesssim & \Big\langle\frac{1}{8}(t_0-t)+\frac{x_0}{2}\Big\rangle^{-2^{-}}\Big\langle \Big( \frac{x_1-z_1(t)}{\lambda},\frac{x_2}{\lambda}\Big) \Big\rangle^{-1^{+}}. 
\end{aligned}    
\end{equation}
Hence, using that $\langle (x_1,x_2) \rangle^{-1^{+}}\in L^{2}(\mathbb{R}^2)$, we get
\begin{equation}\label{A2estm2}
\begin{aligned}
\Big|\frac{2}{\lambda}\frac{d\lambda}{dt}&\int_{\Omega_2} (\Lambda Q)_{\lambda,z_1} \, \eta \, \varphi \, \mathrm{d}x\Big|\\
\lesssim & \Big| \frac{1}{\lambda}\frac{d\lambda}{dt}\Big| \Big \langle\frac{1}{8}(t_0-t)-\frac{x_0}{2}\Big\rangle^{-2^{-}} \|\eta\|_{L^2}\\
\lesssim & \Big(\|\epsilon_0\|_{H^{\frac{1}{2}}}^2+\|\epsilon_0\|_{H^{\frac{1}{2}}}^3+\omega \Big|\int_{\mathbb{R}^2} \epsilon_0 Q\, \mathrm{d}x\Big|\Big)\Big\langle\frac{1}{8}(t_0-t)+\frac{x_0}{2}\Big\rangle^{-2^{-}}.
\end{aligned}    
\end{equation}
The estimate for $\mathcal{A}_2$ is now a consequence of \eqref{A2estm1} and \eqref{A2estm2}.

On the other hand, given that $\partial_{x_1}Q$ decays faster than $\Lambda Q$, we have that the same arguments in the estimate for $\mathcal{A}_2$ above, and the bound of $\Big|\frac{1}{\lambda}\frac{d z_1}{dt}-1\Big|$ in \eqref{newvesteq1} yield
\begin{equation*}
 \begin{aligned}
|\mathcal{A}_3|\lesssim & \Big(\|\epsilon_0\|_{H^{\frac{1}{2}}}^2+\|\epsilon_0\|_{H^{\frac{1}{2}}}^3+\omega \Big|\int_{\mathbb{R}^2} \epsilon_0 Q\, \mathrm{d}x\Big|\Big)\\
 & \times \Big(\Big|\frac{1}{8}(t_0-t)+\frac{x_0}{2}\Big|^{-(2\gamma-1)}+\Big\langle\frac{1}{8}(t_0-t)+\frac{x_0}{2}\Big\rangle^{-2^{-}}\Big).
 \end{aligned}   
\end{equation*}
Next, we study $\mathcal{A}_4$. Using integration by parts and that $\partial_{x_1}\varphi=\phi$, we find
\begin{equation*}
\begin{aligned}
\mathcal{A}_4=&\int_{\mathbb{R}^2} \eta^2\partial_{x_1} \eta \, \varphi\, \mathrm{d}x  +2\int_{\mathbb{R}^2} Q_{\lambda,z_1} \eta \partial_{x_1} \eta \, \varphi\, \mathrm{d}x+\int_{\mathbb{R}^2} \eta^3 \phi\, \mathrm{d}x+2\int_{\mathbb{R}^2} Q_{\lambda,z_1}\eta^2 \phi\, \mathrm{d}x\\
=&\frac{2}{3}\int_{\mathbb{R}^2} \eta^3 \, \phi \, \mathrm{d}x+\int_{\mathbb{R}^2} Q_{\lambda,z_1}\eta^2 \phi\, \mathrm{d}x -\int_{\mathbb{R}^2} \partial_{x_1} \big( Q_{\lambda,z_1}\big) \eta^2 \varphi\, \mathrm{d}x\\
=:& \mathcal{A}_{4,1}+\mathcal{A}_{4,2}+\mathcal{A}_{4,3}.
\end{aligned}    
\end{equation*}
Writing $\eta^3 \phi=\eta(\eta \sqrt{\phi})^{2}$, applying H\"older inequality and Sobolev embedding $H^{\frac{1}{2}}(\mathbb{R}^2)\hookrightarrow L^4(\mathbb{R}^2)$, we deduce that there exists a constant $c_1>0$ such that
\begin{equation}\label{A41Estimate}
\begin{aligned}
|\mathcal{A}_{4,1}|\lesssim & \|\eta\|_{L^2}\|\eta \sqrt{\phi}\|_{L^4}^2\\
\lesssim & \Big(\|\epsilon_0\|_{H^{\frac{1}{2}}}^2+\|\epsilon_0\|_{H^{\frac{1}{2}}}^3+\omega \Big|\int_{\mathbb{R}^2} \epsilon_0 Q\, \mathrm{d}x\Big|\Big)^{\frac{1}{2}}\|\eta \sqrt{\phi}\|_{H^{\frac{1}{2}}}^2\\
\leq & c_1\Big(\|\epsilon_0\|_{H^{\frac{1}{2}}}^2+\|\epsilon_0\|_{H^{\frac{1}{2}}}^3+\omega \Big|\int_{\mathbb{R}^2} \epsilon_0 Q\, \mathrm{d}x\Big|\Big)^{\frac{1}{2}}\Big(\||\nabla_x|^{\frac{1}{2}}\big(\eta \sqrt{\phi}\big)\|_{L^2}^2+\|\eta \sqrt{\phi}\|_{L^2}^2\Big),
\end{aligned}    
\end{equation}
where we have applied \eqref{eqH1boundeta}. Now, the idea is to take $\|\epsilon_0\|_{H^{\frac{1}{2}}}$, and $\omega>0$ small such that one can absorb the estimate for $\mathcal{A}_{4,1}$ for that of $\mathcal{A}_1$. Taking $\omega>0$ small (thus $n$ large in \eqref{inicondtins}), we can assume that 
\begin{equation*}
 c_1\Big(\|\epsilon_0\|_{H^{\frac{1}{2}}}^2+\|\epsilon_0\|_{H^{\frac{1}{2}}}^3+\omega \Big|\int_{\mathbb{R}^2} \epsilon_0 Q\, \mathrm{d}x\Big|\Big)^{\frac{1}{2}}\leq \frac{1}{8}.   
\end{equation*}
Then we get
\begin{equation*}
\begin{aligned}
\mathcal{A}_1+\mathcal{A}_{4,1} \leq & -\int_{\mathbb{R}^2} \bigg( \frac{7}{8}\Big(|\nabla_x|^{\frac{1}{2}}\big(\eta \sqrt{\phi}\big)\Big)^2 +\frac{1}{8} \eta^2 \phi\bigg) \, \mathrm{d}x\leq  0.    
\end{aligned}
\end{equation*}
Next, we deal with $\mathcal{A}_{4,2}$, and $\mathcal{A}_{4,3}$. Decomposing $\mathbb{R}^2=\Omega_1\cup \Omega_2$ as above, and following previous arguments, one has that for $(x_1,x_2)\in \Omega_1$,
\begin{equation*}
|\phi(x_1,x_2)| \lesssim \Big\langle\frac{1}{8}(t_0-t)+\frac{x_0}{2}\Big\rangle^{-2\gamma},
\end{equation*}
and for $(x_1,x_2)\in \Omega_2$, it follows
\begin{equation*}
\begin{aligned}
|Q_{\lambda,z_1}(x_1,x_2)|
\lesssim & \Big\langle\frac{1}{8}(t_0-t)+\frac{x_0}{2}\Big\rangle^{-3}.
\end{aligned}    
\end{equation*}
Thus, similar ideas in \eqref{A2estm1} and \eqref{A2estm2} allow us to conclude
\begin{equation*}
 \begin{aligned}
|\mathcal{A}_{4,2}|\lesssim & \|\eta\|_{L^2}^2 \Big(\Big\langle\frac{1}{8}(t_0-t)+\frac{x_0}{2}\Big\rangle^{-2\gamma}+\Big\langle\frac{1}{8}(t_0-t)+\frac{x_0}{2}\Big\rangle^{-3}\Big)\\
\lesssim & \Big(\|\epsilon_0\|_{H^{\frac{1}{2}}}^2+\|\epsilon_0\|_{H^{\frac{1}{2}}}^3+\omega \Big|\int_{\mathbb{R}^2} \epsilon_0 Q\, \mathrm{d}x\Big|\Big)\\
 & \times \Big(\Big\langle\frac{1}{8}(t_0-t)+\frac{x_0}{2}\Big\rangle^{-2\gamma}+\Big\langle\frac{1}{8}(t_0-t)+\frac{x_0}{2}\Big\rangle^{-3}\Big).
 \end{aligned}   
\end{equation*}
Similarly, we find
\begin{equation*}
 \begin{aligned}
|\mathcal{A}_{4,3}|\lesssim & \Big(\|\epsilon_0\|_{H^{\frac{1}{2}}}^2+\|\epsilon_0\|_{H^{\frac{1}{2}}}^3+\omega \Big|\int_{\mathbb{R}^2} \epsilon_0 Q\, \mathrm{d}x\Big|\Big)\\
 & \times \Big(\Big|\frac{1}{8}(t_0-t)+\frac{x_0}{2}\Big|^{-(2\gamma-1)}+\Big\langle\frac{1}{8}(t_0-t)+\frac{x_0}{2}\Big\rangle^{-3}\Big).
 \end{aligned}   
\end{equation*}
Collecting the previous estimates, we deduce
\begin{equation}\label{Fexpression}
 \begin{aligned}
  \frac{d}{dt}\mathcal{J}_{x_0,t_0}(t)\lesssim &\Big(\|\epsilon_0\|_{H^{\frac{1}{2}}}^2+\|\epsilon_0\|_{H^{\frac{1}{2}}}^3+\omega \Big|\int_{\mathbb{R}^2} \epsilon_0 Q\, \mathrm{d}x\Big|\Big)\\
 & \times \Big(\Big|\frac{1}{8}(t_0-t)+\frac{x_0}{2}\Big|^{-(2\gamma-1)}+\Big\langle\frac{1}{8}(t_0-t)+\frac{x_0}{2}\Big\rangle^{-2^{-}}\Big).
 \end{aligned}   
\end{equation}
Now, given $t\in (0,t_0)$, a change of variable reveals
\begin{equation*}
\begin{aligned}
 \int_t^{t_0} \Big|\frac{1}{8}(t_0-\tau)+\frac{x_0}{2}\Big|^{-(2\gamma-1)}\, d\tau \lesssim & \frac{1}{|x_0|^{2\gamma-2}},   
\end{aligned}
\end{equation*}
and
\begin{equation*}
\begin{aligned}
 \int_t^{t_0} \Big\langle\frac{1}{8}(t_0-\tau)+\frac{x_0}{2}\Big\rangle^{-2^{-}} \, d\tau \lesssim & \frac{1}{|x_0|^{1^{-}}}.   
\end{aligned}
\end{equation*}
Consequently, integrating \eqref{Fexpression} between $[t,t_0]$, we obtain the desired inequality. 
\end{proof}
\begin{cor}\label{propdecayeta}
Let $\omega>0$, $1<\gamma\leq \frac{3}{2}$ and $M>0$ be such that the hypothesis of Lemma \ref{almostmontlemmaeta} hold true. Then there exists $C=C(M)>0$ such that for every $t\geq 0$ and $x_0>0$
    \begin{equation}\label{eqineta}
     \begin{aligned}
       \int_{\mathbb{R}}\int_{x_1>x_0} & \eta^2(x_1+z_1(t),x_2,t)\, \mathrm{d}x_1  \mathrm{d}x_2\\
       \leq&  \frac{C}{n^2|x_0|^{2\gamma-1}} +C \Big(\|\epsilon_0\|_{H^{\frac{1}{2}}}^2+\|\epsilon_0\|_{H^{\frac{1}{2}}}^3+\omega \Big|\int_{\mathbb{R}^2} \epsilon_0 Q\, \mathrm{d}x\Big|\Big)\bigg(\frac{1}{|x_0|^{2\gamma-2}}+\frac{1}{|x_0|^{1^{-}}}\bigg).
     \end{aligned}   
    \end{equation}
\end{cor}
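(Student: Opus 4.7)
The strategy is to apply Lemma \ref{almostmontlemmaeta} with $t_0=t$ and then convert the weighted statement it provides into the localized estimate \eqref{eqineta} via matching upper and lower bounds on $\mathcal{J}_{x_0,t}(t)$ and $\mathcal{J}_{x_0,t}(0)$, respectively. The link is the monotonicity and positivity of $\varphi$ defined in \eqref{psidef}.

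For the \emph{lower bound}, in the region $\{x_1>z_1(t)+x_0\}$ the argument of the weight is nonnegative, so by monotonicity of $\varphi$ in $x_1$,
$$\mathcal{J}_{x_0,t}(t)\;\geq\;\varphi(0)\int_{x_1>z_1(t)+x_0}\eta^{2}(x,t)\,\mathrm{d}x,$$
where $\varphi(0)=\int_{-\infty}^{0}\langle r\rangle^{-2\gamma}\mathrm{d}r>0$ since $\gamma>\tfrac12$. After the translation $x_1\mapsto x_1+z_1(t)$ this recovers the left-hand side of \eqref{eqineta}.

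For the \emph{upper bound on the initial term}, we note that $\lambda(0)=1$, $z_1(0)=0$, so $\eta(\cdot,0)=\epsilon_0=\tfrac{1}{n}(Q+a\psi_0)$. Property \eqref{smalleepsicond3} gives $z_1(t)\geq t/2$, whence $x_1+\tfrac{t}{2}-z_1(t)-x_0\leq x_1-x_0$, and monotonicity of $\varphi$ yields
$$\mathcal{J}_{x_0,t}(0)\;\leq\;\int_{\mathbb{R}^{2}}\epsilon_0^{2}(x)\,\varphi(x_1-x_0,x_2)\,\mathrm{d}x.$$
The decay bounds $|Q|,|\psi_0|\lesssim\langle x\rangle^{-3}$ from \eqref{Qdecay} and Theorem \ref{propL} give $\epsilon_0^{2}\lesssim n^{-2}\langle x\rangle^{-6}$. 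Splitting the integral into $\{x_1\leq x_0/2\}$ and $\{x_1\geq x_0/2\}$: on the first region, the weight is bounded by $\varphi(-x_0/2)\lesssim |x_0|^{1-2\gamma}$ and contributes $\lesssim n^{-2}|x_0|^{1-2\gamma}$; on the second region $\varphi$ is bounded and the polynomial decay of $\epsilon_0^{2}$ produces an integrable tail $\lesssim\min(1,x_0^{-4})$, which is absorbed into $|x_0|^{1-2\gamma}$ for $1<\gamma\leq\tfrac32$ (the case $x_0\leq 1$ handled by the trivial bound). Hence $\mathcal{J}_{x_0,t}(0)\lesssim n^{-2}|x_0|^{-(2\gamma-1)}$.

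Combining the lower bound with Lemma \ref{almostmontlemmaeta} applied at $t_0=t$, namely
$$\mathcal{J}_{x_0,t}(t)\;\leq\;\mathcal{J}_{x_0,t}(0)+C\Big(\|\epsilon_0\|_{H^{1/2}}^{2}+\|\epsilon_0\|_{H^{1/2}}^{3}+\omega\Big|\!\int_{\mathbb{R}^2}\!\epsilon_0 Q\,\mathrm{d}x\Big|\Big)\Big(|x_0|^{-(2\gamma-2)}+|x_0|^{-1^{-}}\Big),$$
and dividing by $\varphi(0)>0$, yields \eqref{eqineta}. The subtle point is that the leading rate $|x_0|^{-(2\gamma-1)}$ (with prefactor $n^{-2}$) is sharper than what the bare Sobolev bound $\|\epsilon_0\|_{H^{1/2}}\lesssim n^{-1}$ supplies; recovering it forces one to exploit the explicit polynomial decay of $Q$ and $\psi_0$ rather than a generic norm estimate, which is where the splitting at $x_1=x_0/2$ and the computation of $\varphi(-x_0/2)$ become essential.
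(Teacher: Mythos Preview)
Your proof is correct and follows essentially the same approach as the paper: applying Lemma \ref{almostmontlemmaeta} at $t_0=t$, then bounding $\mathcal{J}_{x_0,t}(t)$ below via $\varphi\geq\varphi(0)$ on $\{x_1>z_1(t)+x_0\}$ and bounding $\mathcal{J}_{x_0,t}(0)$ above by first using $z_1(t)\geq t/2$ with monotonicity of $\varphi$, then splitting at $x_1=x_0/2$ and invoking the polynomial decay of $Q$ and $\psi_0$. The paper carries out the second-region estimate by writing $|\epsilon_0|\lesssim n^{-1}\langle x_0\rangle^{-2^-}\langle x\rangle^{-1^+}$, whereas you integrate $\epsilon_0^2\lesssim n^{-2}\langle x\rangle^{-6}$ directly; both yield a contribution dominated by $n^{-2}|x_0|^{-(2\gamma-1)}$.
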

\begin{proof}
Applying Lemma \ref{almostmontlemmaeta} with $ \mathcal{J}_{x_0,t}(t)$ and $\mathcal{J}_{x_0,t}(0)$, and that $\lambda(0)=1$, and $z_1(0)=0$, we find that there exists a constant $C>0$ such that
\begin{equation}\label{almosteq1}
 \begin{aligned}
  \int_{\mathbb{R}^2} \eta^2(x_1,x_2,t)&\varphi(x_1-z_1(t)-x_0,x_2)\, \mathrm{d}x\\
  \leq &  \int_{\mathbb{R}^2} \epsilon^2_0(x_1,x_2)\varphi(x_1-z_1(t)+\frac{1}{2}t-x_0,x_2)\, \mathrm{d}x\\
  &+C \Big(\|\epsilon_0\|_{H^{\frac{1}{2}}}^2+\|\epsilon_0\|_{H^{\frac{1}{2}}}^3+\omega \Big|\int_{\mathbb{R}^2} \epsilon_0 Q\, \mathrm{d}x\Big|\Big)\bigg(\frac{1}{|x_0|^{2\gamma-2}}+\frac{1}{|x_0|^{1^{-}}}\bigg).
 \end{aligned}   
\end{equation}
Setting $c(\gamma):=\int_{-\infty}^0 \frac{dr}{\langle r\rangle^{2\gamma}}$,  note that $\varphi(x_1,x_2)\geq c(\gamma)$ for all $x_1\geq 0$, $x_2\in \mathbb{R}$. Thus, changing variables, and using the definition of $\eta$, we deduce
\begin{equation}\label{almosteq2}
 \begin{aligned}
\int_{\mathbb{R}^2} \eta^2(x_1,x_2,t)\varphi(x_1-z_1(t)-x_0,z_2)\, \mathrm{d}x=&\int_{\mathbb{R}^2} \eta^2(x_1+z_1(t),x_2,t)\varphi(x_1-x_0,x_2)\, \mathrm{d}x_1  \mathrm{d}x_2 \\
\geq &c(\gamma)\int_{\mathbb{R}}\int_{x_1>x_0}\eta^2(x_1+z_1(t),x_2,t)\, \mathrm{d}x_1  \mathrm{d}x_2.
 \end{aligned}   
\end{equation}
On the other hand, since $z_1(t)-\frac{1}{2}t\geq 0$, and $x_1\mapsto \varphi(x_1,x_2)$ is an increasing function, it follows
\begin{equation*}
\begin{aligned}
 \int_{\mathbb{R}^2} \epsilon^2_0(x_1,x_2) &\varphi(x_1-z_1(t)+\frac{1}{2}t-x_0,x_2)\, \mathrm{d}x \\
 \leq &\int_{\mathbb{R}^2} \epsilon^2_0(x_1,x_2)\varphi(x_1-x_0,x_2)\, \mathrm{d}x\\
 =& \int_{\{x_1\leq \frac{x_0}{2}\}} \epsilon^2_0(x_1,x_2)\varphi(x_1-x_0,x_2)\, \mathrm{d}x+\int_{\{x_1\geq \frac{x_0}{2}\}} \epsilon^2_0(x_1,x_2)\varphi(x_1-x_0,x_2)\, \mathrm{d}x.
\end{aligned}    
\end{equation*}
When $x_1\leq \frac{x_0}{2}$, we have $x_1-x_0\leq -\frac{x_0}{2}<0$ (i.e., $|x_1-x_0|\geq \frac{x_0}{2}$), then from the definition of $\varphi$
\begin{equation*}
|\varphi(x_1-x_0,x_2)|\lesssim    |x_1-x_0|^{-(2\gamma-1)}\lesssim   |x_0|^{-(2\gamma-1)}.  
\end{equation*}
Hence, recalling $\epsilon_0$ as in \eqref{inicondtins}, we get
\begin{equation*}
\begin{aligned}
\int_{\{x_1\leq \frac{x_0}{2}\}} \epsilon^2_0(x_1,x_2)\varphi(x_1-x_0,x_2)\, \mathrm{d}x\lesssim \frac{1}{n^2} | x_0|^{-(2\gamma-1)}\big(\|Q\|_{L^2}^2+\|\psi_0\|_{L^2}^2\big).   
\end{aligned}    
\end{equation*}
On the other hand, when $x_1\geq \frac{x_0}{2}$, the spatial decay of $Q$ and $\psi_0$ establish
\begin{equation}\label{decayE0}
| \epsilon_0(x_1,x_2)| \lesssim \frac{1}{n}\langle (x_1,x_2)\rangle^{-3}\lesssim \frac{1}{n} \langle x_0 \rangle^{-2^{-}} \langle (x_1,x_2) \rangle^{-1^{+}}.  
\end{equation}
Hence, we get
\begin{equation*}
\begin{aligned}
\int_{\{x_1\geq \frac{x_0}{2}\}} \epsilon^2_0(x_1,x_2)\varphi(x_1-x_0,x_2)\, \mathrm{d}x\lesssim \frac{1}{n^2}  \langle x_0 \rangle^{-4^{-}}.   
\end{aligned}    
\end{equation*}
In conclusion,
\begin{equation}\label{almosteq3}
 \int_{\mathbb{R}^2} \epsilon^2_0(x_1,x_2) \varphi(x_1-z_1(t)+\frac{1}{2}t-x_0,x_2)\, \mathrm{d}x \lesssim \frac{1}{n^2} |x_0|^{-(2\gamma-1)}.    
\end{equation}
Plugging \eqref{almosteq2} and \eqref{almosteq3} into \eqref{almosteq1} completes the proof of \eqref{eqineta}. 
\end{proof}
Inspired by the results in \cite[Proposition 2]{KenigMartel2009}, we introduce a second monotonicity formula for $\eta$, which aims to improve the decay to the right in \eqref{eqineta}. Nevertheless, the initial decay obtained \eqref{eqineta}  plays an important part in deriving such improvement.

Given $x_0, t_0>0$, and $t\in [0,t_0]$, we consider 
\begin{equation}
  \int_{\mathbb{R}^2} \eta^2(x_1,x_2,t)\rho(x_1,x_2,t)\, \mathrm{d}x, 
\end{equation}
where for fixed $0<\nu <\frac{3}{8}$, setting $\widetilde{x}=x_1-z_1(t)-\nu(t_0-t)-x_0$, $x^{\ast}=-\nu(t_0-t)-x_0$, we define
\begin{equation*}
 \begin{aligned}
\rho(x_1,x_2,t):=\varphi(\widetilde{x},x_2)-\varphi(x^{\ast},x_2).    
 \end{aligned}   
\end{equation*}
Formally, $\rho$ is useful to establish better decay estimates for $\eta$ in the region $x_1\gtrsim x_0$ in contrast with \eqref{eqineta}.

\begin{lem}\label{Monforeta}
Let $0<\nu <\frac{3}{8}$ fixed. Consider $\omega>0$ and $M>0$ be such that Corollary \ref{propdecayeta} holds with $\gamma=\frac{3}{2}$. Then, by taking $M>0$ larger (independent of $\omega$), if necessary, there exists $C=C(M)>0$ such that for all $x_0\geq M$, $t_0,t\geq 0$ with $t\in [0,t_0]$
 \begin{equation}\label{Newmont1}
 \begin{aligned}
    \int_{\mathbb{R}^2}\eta^2(x_1,x_2,t_0) \rho(x_1,x_2,t_0) \, \mathrm{d}x\leq & \int_{\mathbb{R}^2}\eta^2(x_1,x_2,t) \rho(x_1,x_2,t) \, \mathrm{d}x \\
    &+C\Big(\frac{1}{n^2}+\|\epsilon_0\|_{H^{\frac{1}{2}}}^2+\|\epsilon_0\|_{H^{\frac{1}{2}}}^3+\omega \Big|\int_{\mathbb{R}^2} \epsilon_0 Q\, \mathrm{d}x\Big|\Big)\frac{1}{|x_0|^{\frac{3}{2}^{-}}}.
\end{aligned}
\end{equation}
\end{lem}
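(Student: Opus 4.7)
The strategy parallels Lemma~\ref{almostmontlemmaeta}: we compute $\frac{d}{dt}\int_{\mathbb{R}^2}\eta^2\rho\,\mathrm{d}x$ using \eqref{eqforetatime}, apply Corollary~\ref{Maincorollary} to the dispersive commutator, and estimate the remaining scaling and nonlinear contributions. Two new features drive the argument. First, since $\varphi(x^{\ast},x_2)$ is constant in $x_1$, we have $\partial_{x_1}\rho = \phi(\widetilde{x},x_2)$ and $[\partial_{x_1}|\nabla_x|;\rho] = [\partial_{x_1}|\nabla_x|;\varphi(\widetilde{x},\cdot)]$, so Corollary~\ref{Maincorollary} applies verbatim with the shifted weight $\varphi(\widetilde{x},\cdot)$, yielding a coercive bound involving $\bigl||\nabla_x|^{1/2}(\eta\sqrt{\phi(\widetilde{x},\cdot)})\bigr|^2$ plus a small remainder $\tfrac{c_2}{M}\int\eta^2\phi(\widetilde{x},\cdot)\,\mathrm{d}x$. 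Second, the time derivative
\[
\partial_t\rho = \bigl(\nu - \tfrac{dz_1}{dt}\bigr)\phi(\widetilde{x},x_2) - \nu\,\phi(x^{\ast},x_2)
\]
has a favorable sign: by \eqref{smalleepsicond3} and $\nu < 3/8$, one has $\nu - \tfrac{dz_1}{dt} \leq -3/8$. Choosing $M$ large so that $c_2/M < 1/8$, the dispersive contribution and $\int\eta^2\partial_t\rho\,\mathrm{d}x$ combine to produce
\[
-\tfrac14\!\int_{\mathbb{R}^2}\!\eta^2\phi(\widetilde{x},\cdot)\,\mathrm{d}x \;-\; \nu\,\phi(x^{\ast})\|\eta\|_{L^2}^2,
\]
both of which are nonpositive; a change of variables moreover shows $\nu\int_t^{t_0}\phi(x^{\ast}(s))\,\mathrm{d}s \lesssim M^{2\gamma-1}|x_0|^{-(2\gamma-1)}$, which for $\gamma=3/2$ is an $|x_0|^{-2}$ error, compatible with the target rate.

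The scaling and interaction terms $\mathcal{A}_2, \mathcal{A}_3$ and the nonlinear terms $\mathcal{A}_{4,1}, \mathcal{A}_{4,2}, \mathcal{A}_{4,3}$ from \eqref{energyestimJ}, adapted to $\rho$, are handled via the region decomposition $\mathbb{R}^2 = \Omega_1 \cup \Omega_2$ used in Lemma~\ref{almostmontlemmaeta}. On $\Omega_1$, the identity $\rho = \int_{x^{\ast}/M}^{\widetilde{x}/M}\langle r\rangle^{-2\gamma}\,\mathrm{d}r$ with both endpoints $\lesssim -x_0/M$ gives polynomial decay for $\rho$; on $\Omega_2$, the decay \eqref{Qdecay} of $Q$ and its derivatives makes the ground-state-weighted integrands small. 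The cubic self-interaction $\tfrac{2}{3}\int \eta^3\phi(\widetilde{x},\cdot)\,\mathrm{d}x$ is bounded by Hölder, Sobolev embedding and \eqref{eqH1boundeta} exactly as in \eqref{A41Estimate}, and absorbed into the coercive term once $\omega$ and $\|\epsilon_0\|_{H^{1/2}}$ are chosen sufficiently small.

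Integrating the resulting differential inequality on $[t,t_0]$ yields \eqref{Newmont1}. The sharper rate $|x_0|^{-3/2^-}$ emerges from combining: (i) the pointwise decay of $|\rho|$ and of $Q$-related quantities on their respective regions, which contribute errors of the form $\langle\tfrac{1}{8}(t_0-t)+\tfrac{x_0}{2}\rangle^{-(2\gamma-1)}$ and $\langle\cdot\rangle^{-2^-}$; and (ii) the improved $L^2$-bounds for $\eta$ on shifted half-spaces $\{x_1 > z_1(s) + c x_0\}$ furnished by Corollary~\ref{propdecayeta} with $\gamma = 3/2$, which, applied inside the interaction integrals on $\Omega_2$, gains a half-power of $x_0$ over the naive global $L^2$-bound. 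The principal obstacle is precisely this bookkeeping: the target $|x_0|^{-3/2^-}$ is sharper than the $|x_0|^{-1^-}$ in Lemma~\ref{almostmontlemmaeta}, and recovering the extra half-power requires using Corollary~\ref{propdecayeta} as input (rather than conclusion) to restrict the support of the worst error terms to regions where $\eta$ itself is already known to be $L^2$-small.
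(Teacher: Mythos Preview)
Your overall architecture matches the paper's: differentiate $\int\eta^2\rho$, apply Corollary~\ref{Maincorollary} to the shifted weight $\varphi(\widetilde{x},\cdot)$, exploit the sign of $\partial_t\rho$, split into near and far regions, and feed Corollary~\ref{propdecayeta} into the far-right region to gain the extra half-power. The cubic absorption via \eqref{A41Estimate} is also exactly right.

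There is, however, a genuine bookkeeping gap on the near region. You quote the $\Omega_1$ contribution as $\langle\tfrac{1}{8}(t_0-t)+\tfrac{x_0}{2}\rangle^{-(2\gamma-1)}=\langle\cdot\rangle^{-2}$, obtained from the tail bound $|\rho|\le\varphi(\widetilde{x})+\varphi(x^{\ast})\lesssim|\cdot|^{-2}$. After time-integration this gives only $|x_0|^{-1}$, which does \emph{not} reach the target $|x_0|^{-3/2^-}$; Corollary~\ref{propdecayeta} cannot rescue this term since $\Omega_1$ contains the soliton core where $\eta$ has no extra smallness. The paper instead exploits the cancellation in $\rho=\varphi(\widetilde{x})-\varphi(x^{\ast})$ via the mean-value bound
\[
|\rho(x,t)|\lesssim |x_1-z_1(t)|\,\sup_{|s|\sim \nu(t_0-t)+x_0}|\phi(s)|\lesssim |x_1-z_1(t)|\,\langle\nu(t_0-t)+x_0\rangle^{-2\gamma},
\]
then absorbs the factor $|x_1-z_1(t)|$ into $\|(x\Lambda Q)_{\lambda,z_1}\|_{L^2}<\infty$ (using $|x\Lambda Q|\lesssim\langle x\rangle^{-2}\in L^2(\mathbb{R}^2)$). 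This yields the rate $\langle\cdot\rangle^{-3}$ on $\Omega_1$, which integrates to $|x_0|^{-2}$ and is harmless.

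A related issue: the decomposition from Lemma~\ref{almostmontlemmaeta} uses the threshold $\tfrac{1}{8}(t_0-t)+\tfrac{x_0}{2}$, but for the mean-value bound above you need $\widetilde{x}$ to stay comparable to $-(\nu(t_0-t)+x_0)$ on $\Omega_1$, which fails when $\nu\le\tfrac{1}{8}$. The paper handles this by taking a \emph{two-sided} near region $\Omega_1=\{|x_1-z_1(t)|\le\tfrac{\nu}{2}(t_0-t)+\tfrac{x_0}{2}\}$ and splitting the complement into $\Omega_2^{+}$ (far right, where Corollary~\ref{propdecayeta} applies) and $\Omega_2^{-}$ (far left, where both $|\rho|\lesssim|\cdot|^{-2}$ and the decay of $(\Lambda Q)_{\lambda,z_1}$ combine to give $|\cdot|^{-4^-}$). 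Either adopt this three-region split, or replace $\tfrac{1}{8}$ by a constant strictly below $\nu$ and then use the mean-value estimate as above.
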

\begin{proof}
In the following, we consider the function $\varphi$ in \eqref{psidef} with $\gamma=\frac{3}{2}$.  Using the equation \eqref{eqforetatime}, that $\partial_{x_1}\varphi=\phi$, and $\partial_{x_1} \rho(x,t)=\phi(\widetilde{x},x_2)$, one gets
\begin{equation*}
   \begin{aligned}
\frac{d}{dt} \int_{\mathbb{R}^2} \eta^2 \rho \, \mathrm{d}x=&2\int_{\mathbb{R}^2} \partial_t \eta  \eta\,  \rho\, \mathrm{d}x-\Big(\frac{d z_1}{dt}-\nu\Big)\int_{\mathbb{R}^2} \eta^2 \phi(\widetilde{x})\, \mathrm{d}x-\nu \int_{\mathbb{R}^2} \eta^2 \phi(x^{\ast})\, \mathrm{d}x\\
=:&\mathcal{B}_1+\mathcal{B}_2+\mathcal{B}_3+\mathcal{B}_4,
\end{aligned}   
\end{equation*}
where
\begin{equation*}
\begin{aligned}
\mathcal{B}_1=&2 \int_{\mathbb{R}^2} \big(\partial_{x_1}|\nabla_x|\eta \big)\, \eta \, \rho \, \mathrm{d}x -\Big(\frac{d z_1}{dt}-\nu\Big)\int_{\mathbb{R}^2} \eta^2 \phi(\widetilde{x},x_2)\, \mathrm{d}x-\nu \int_{\mathbb{R}^2} \eta^2 \phi(x^{\ast},x_2)\, \mathrm{d}x, \\
\mathcal{B}_2=&\frac{2}{\lambda}\frac{d\lambda}{dt}\int_{\mathbb{R}^2} (\Lambda Q)_{\lambda,z_1} \, \eta \, \rho \, \mathrm{d}x,\\
\mathcal{B}_3=&2\Big(\frac{1}{\lambda}\frac{d z_1}{dt}-\frac{1}{\lambda^2}\Big)\int_{\mathbb{R}^2} (\partial_{x_1}Q)_{\lambda,z_1}\, \eta\, \rho \, \mathrm{d}x,\\
\mathcal{B}_4=&\frac{2}{3}\int_{\mathbb{R}^2} \eta^3 \phi (\widetilde{x},x_2)\, \mathrm{d}x+\int_{\mathbb{R}^2} Q_{\lambda,z_1} \eta^2 \phi(\widetilde{x},x_2)\, \mathrm{d}x-\int_{\mathbb{R}^2} \partial_{x_1}(Q_{\lambda,z_1})\eta^2 \rho\, \mathrm{d}x.
\end{aligned}    
\end{equation*}
The commutator estimate in Corollary \ref{Maincorollary} and \eqref{Smallnessdiff} yield
\begin{equation*}
\begin{aligned}
\mathcal{B}_1=&2 \int_{\mathbb{R}^2} \big(\partial_{x_1}|\nabla_x|\eta \big)\, \eta \, \varphi(\widetilde{x},x_2) \, \mathrm{d}x -\Big(\frac{d z_1}{dt}-\nu\Big)\int_{\mathbb{R}^2} \eta^2 \phi(\widetilde{x},x_2)\, \mathrm{d}x\\
&-\nu \int_{\mathbb{R}^2} \eta^2 \phi(x^{\ast},x_2)\, \mathrm{d}x  \\
\leq & -\int_{\mathbb{R}^2} \Big(|\nabla_x|^{\frac{1}{2}}\big(\eta \sqrt{\phi(\widetilde{x},x_2)}\big)\Big)^2\, \mathrm{d}x -\Big(\frac{3}{4}-\nu-\frac{c}{M}\Big)\int_{\mathbb{R}^2} \eta^2 \phi(\widetilde{x},x_2) \, \mathrm{d}x\\
&-\nu \int_{\mathbb{R}^2} \eta^2 \phi(x^{\ast},x_2)\, \mathrm{d}x.
\end{aligned}    
\end{equation*}
Thus, when $0<\nu<\frac{3}{8}$, taking $M>0$ large such that $\frac{c}{M}<\nu$, one deduces
\begin{equation*}
\begin{aligned}
\mathcal{B}_1\leq & -\int_{\mathbb{R}^2} \Big(|\nabla_x|^{\frac{1}{2}}\big(\eta \sqrt{\phi(\widetilde{x},x_2)}\big)\Big)^2\, \mathrm{d}x -\Big(\frac{3}{4}-2\nu\Big)\int_{\mathbb{R}^2} \eta^2 \phi(\widetilde{x},x_2) \, \mathrm{d}x\\
&-\nu \int_{\mathbb{R}^2} \eta^2 \phi(x^{\ast},x_2)\, \mathrm{d}x.
\end{aligned}    
\end{equation*}
To estimate $\mathcal{B}_2$, we decompose $\mathbb{R}^2$ into the following sets
\begin{equation*}
\begin{aligned}
&\Omega_1:=\Big\{(x_1,x_2)\in \mathbb{R}^2: |x_1-z_1(t)|\leq \frac{\nu}{2}(t_0-t)+\frac{x_0}{2}\Big\},\\
&\Omega_2:=\Big\{(x_1,x_2)\in \mathbb{R}^2: |x_1-z_1(t)|\geq \frac{\nu}{2}(t_0-t)+\frac{x_0}{2}\Big\}.\\
\end{aligned}    
\end{equation*}
We also write $\Omega_2=\Omega_2^{+}+\Omega_2^{-}$ where 
\begin{equation*}
\begin{aligned}
&\Omega_2^{+}=\left\{(x_1,x_2)\in \mathbb{R}^2: x_1-z_1(t)\geq \frac{\nu}{2}(t_0-t)+\frac{x_0}{2}\right\},\\    
&\Omega_2^{-}=\left\{(x_1,x_2)\in \mathbb{R}^2: x_1-z_1(t)\leq -\frac{\nu}{2}(t_0-t)-\frac{x_0}{2}\right\}.
\end{aligned}
\end{equation*}
We observe that for $(x_1,x_2)\in \Omega_2$, the decay properties of $Q$ imply that \eqref{decayestimate} also holds in this case. Moreover, when $(x_1,x_2)\in\Omega_2^{-}$, we have $\widetilde{x}\leq  -\frac{3\nu}{2}(t_0-t)-\frac{3x_0}{2}<0$, then 
\begin{equation*}
\begin{aligned}
|\varphi(\widetilde{x},x_2)| \lesssim |\nu(t_0-t)+x_0|^{-2},
\end{aligned}    
\end{equation*}
and since $\varphi(x^{\ast})$ satisfies the same estimate above, we conclude that for $(x_1,x_2)\in \Omega_2^{-}$ it follows
\begin{equation*}
  |\rho(x_1,x_2,t)|\lesssim |\nu(t_0-t)+x_0|^{-2}.
\end{equation*}
Hence, using that $\langle (x_1,x_2) \rangle^{-1^{+}}\in L^{2}(\mathbb{R}^2)$, we infer from Cauchy–Schwarz inequality that
\begin{equation*}
\begin{aligned}
\Big|\frac{2}{\lambda}\frac{d\lambda}{dt}\int_{\Omega_2^{-}} (\Lambda Q)_{\lambda,z_1} \, \eta \, \rho \, \mathrm{d}x\Big|\lesssim & \Big| \frac{1}{\lambda}\frac{d\lambda}{dt}\Big|\|\eta\|_{L^2} \Big \langle\nu(t_0-t)-x_0\Big\rangle^{-2^{-}} |\nu(t_0-t)+x_0|^{-2} \\
\lesssim & \Big(\|\epsilon_0\|_{H^{\frac{1}{2}}}^2+\|\epsilon_0\|_{H^{\frac{1}{2}}}^3+\omega \Big|\int_{\mathbb{R}^2} \epsilon_0 Q\, \mathrm{d}x\Big|\Big)|\nu(t_0-t)+x_0|^{-4^{-}}.
\end{aligned}    
\end{equation*}
Now, since $\nu(t_0-t)+x_0\geq x_0\geq M$, using again Cauchy–Schwarz inequality and  \eqref{eqineta} with $\gamma=\frac{3}{2}$ it follows 
\begin{equation*}
\begin{aligned}
\Big|\frac{2}{\lambda}\frac{d\lambda}{dt}&\int_{\Omega_2^{+}} (\Lambda Q)_{\lambda,z_1} \, \eta \, \rho \, \mathrm{d}x  \Big|\\
\lesssim &\Big|\frac{1}{\lambda}\frac{d\lambda}{dt}\Big|\Big(\int_{x_1-z_1(t)\geq \frac{\nu}{2}(t_0-t)+\frac{x_0}{2}} (\Lambda Q)_{\lambda,z_1}^2\, \mathrm{d}x\Big)^{\frac{1}{2}}\Big(\int_{x_1-z_1(t)\geq \frac{\nu}{2}(t_0-t)+\frac{x_0}{2}} \eta^2(x_1,x_2)\, \mathrm{d}x\Big)^{\frac{1}{2}}\\
\lesssim & \Big(\frac{1}{n^2}+\|\epsilon_0\|_{H^{\frac{1}{2}}}^2+\|\epsilon_0\|_{H^{\frac{1}{2}}}^3+\omega \Big|\int_{\mathbb{R}^2} \epsilon_0 Q\, \mathrm{d}x\Big|\Big)\langle \nu (t_0-t)-x_0 \rangle^{-2^{-}}|\nu(t_0-t)+x_0|^{-\frac{1}{2}^{-}}\\
\lesssim & \Big(\frac{1}{n^2}+\|\epsilon_0\|_{H^{\frac{1}{2}}}^2+\|\epsilon_0\|_{H^{\frac{1}{2}}}^3+\omega \Big|\int_{\mathbb{R}^2} \epsilon_0 Q\, \mathrm{d}x\Big|\Big)|\nu(t_0-t)+x_0|^{-\frac{5}{2}^{-}}.
\end{aligned}    
\end{equation*}
Gathering the previous estimates over the sets $\Omega_2^{+}$ and $\Omega_2^{-}$, we arrive at
\begin{equation*}
\begin{aligned}
\Big|\frac{2}{\lambda}\frac{d\lambda}{dt}\int_{\Omega_2} (\Lambda Q)_{\lambda,z_1} \, \eta \, \rho \, \mathrm{d}x  \Big|\lesssim  \Big(\frac{1}{n^2}+\|\epsilon_0\|_{H^{\frac{1}{2}}}^2+\|\epsilon_0\|_{H^{\frac{1}{2}}}^3+\omega \Big|\int_{\mathbb{R}^2} \epsilon_0 Q\, \mathrm{d}x\Big|\Big)|\nu(t_0-t)+x_0|^{-\frac{5}{2}^{-}}.
\end{aligned}    
\end{equation*}
On the other hand, when $(x_1,x_2)\in \Omega_1$, one has
\begin{equation*}
|\widetilde{x}-x^{\ast}|=|x_1-z_1(t)|,
\end{equation*}
the inequality
\begin{equation*}
 \begin{aligned}
\widetilde{x}\leq -\frac{\nu}{2}(t_0-t)-\frac{x_0}{2}<0,
 \end{aligned}
\end{equation*}
and $|\widetilde{x}|\sim (\nu(t_0-t)+x_0)$. Then, when $(x_1,x_2)\in \Omega_1$, the mean value inequality yields
\begin{equation*}
\begin{aligned}
 |\rho(x_1,x_2,t)| \lesssim & |x_1-z_1(t)|\sup_{s\sim (\nu(t_0-t)+x_0)} |\phi(s,x_2)|  \\
\lesssim & |x_1-z_1(t)|\langle \nu(t_0-t)+x_0\rangle^{-3}.
\end{aligned}    
\end{equation*}
Consequently, Cauchy–Schwarz inequality allows us to conclude
\begin{equation*}
\begin{aligned}
\Big|\frac{2}{\lambda}\frac{d\lambda}{dt}\int_{\Omega_1} (\Lambda Q)_{\lambda,z_1} \, \eta \, \varphi \, \mathrm{d}x\Big|\lesssim &  \Big| \frac{1}{\lambda}\frac{d\lambda}{dt}\Big| \|(x\Lambda Q)_{\lambda,z_1}\|_{L^2} \|\eta\|_{L^2} \Big \langle\nu(t_0-t)-x_0\Big\rangle^{-3}\\
\lesssim & \Big(\|\epsilon_0\|_{H^{\frac{1}{2}}}^2+\|\epsilon_0\|_{H^{\frac{1}{2}}}^3+\omega \Big|\int_{\mathbb{R}^2} \epsilon_0 Q\, \mathrm{d}x\Big|\Big)\Big\langle\nu(t_0-t)+x_0\Big\rangle^{-3}.
\end{aligned}    
\end{equation*}
Summarizing, it follows
\begin{equation*}
 |\mathcal{B}_2| \lesssim  \Big(\frac{1}{n^2}+\|\epsilon_0\|_{H^{\frac{1}{2}}}^2+\|\epsilon_0\|_{H^{\frac{1}{2}}}^3+\omega \Big|\int_{\mathbb{R}^2} \epsilon_0 Q\, \mathrm{d}x\Big|\Big)\Big|\nu(t_0-t)+x_0\Big|^{-\frac{5}{2}^{-}}. 
\end{equation*}
Similarly, the decay properties of $(\partial_{x_1}Q)_{\lambda,z_1}$ and the same arguments developed in the estimate for $\mathcal{B}_2$ above show
\begin{equation*}
 |\mathcal{B}_3| \lesssim  \Big(\frac{1}{n^2}+\|\epsilon_0\|_{H^{\frac{1}{2}}}^2+\|\epsilon_0\|_{H^{\frac{1}{2}}}^3+\omega \Big|\int_{\mathbb{R}^2} \epsilon_0 Q\, \mathrm{d}x\Big|\Big)\Big|\nu(t_0-t)+x_0\Big|^{-\frac{5}{2}^{-}}. 
\end{equation*}
Let us deal with $\mathcal{B}_4$. Writing $\eta^3 \phi=\eta(\eta \sqrt{\phi})^{2}$, and arguing as in \eqref{A41Estimate}, there exists a constant $c_1>0$ such that
\begin{equation*}
\begin{aligned}
\Big|\frac{2}{3}\int_{\mathbb{R}^2} & \eta^3 \phi(\widetilde{x},x_2)\, \mathrm{d}x\Big|\\
\leq & c_1\Big(\|\epsilon_0\|_{H^{\frac{1}{2}}}^2+\|\epsilon_0\|_{H^{\frac{1}{2}}}^3+\omega \Big|\int_{\mathbb{R}^2} \epsilon_0 Q\, \mathrm{d}x\Big|\Big)^{\frac{1}{2}}\Big(\||\nabla_x|^{\frac{1}{2}}\big(\eta \sqrt{\phi(\widetilde{x},x_2)}\big)\|_{L^2}^2+\|\eta \sqrt{\phi(\widetilde{x},x_2)}\|_{L^2}^2\Big).
\end{aligned}    
\end{equation*}
Next, we can assume that 
\begin{equation*}
 c_1\Big(\|\epsilon_0\|_{H^{\frac{1}{2}}}^2+\|\epsilon_0\|_{H^{\frac{1}{2}}}^3+\omega \Big|\int_{\mathbb{R}^2} \epsilon_0 Q\, \mathrm{d}x\Big|\Big)^{\frac{1}{2}}\leq \frac{1}{2}\Big(\frac{3}{4}-2\nu\Big)<\frac{3}{8}.   
\end{equation*}
It follows then
\begin{equation*}
\begin{aligned}
\mathcal{B}_1+\frac{2}{3}\int_{\mathbb{R}^2} & \eta^{3}\phi(\widetilde{x},x_2)\mathrm{d}x \\
\leq & -\int_{\mathbb{R}^2} \bigg( \frac{5}{8}\Big(|\nabla_x|^{\frac{1}{2}}\big(\eta \sqrt{\phi(\widetilde{x},x_2)}\big)\Big)^2 +\frac{1}{2}\Big(\frac{3}{4}-2\nu\Big) \eta^2 \phi(\widetilde{x},x_2)\bigg) \, \mathrm{d}x\\
&-\nu \int_{\mathbb{R}^2} \eta^2 \phi(x^{\ast},x_2)\, \mathrm{d}x\\
\leq & 0.    
\end{aligned}
\end{equation*}
We proceed with the estimate of the remaining two terms that conform $\mathcal{B}_4$. Using the same decomposition of $\mathbb{R}^2$ into the sets $\Omega_1$ and $\Omega_2$ previously considered, we notice that $|\widetilde{x}
|\sim (\nu(t_0-t)+x_0)$ in $\Omega_1$ implies
\begin{equation*}
|\phi(\widetilde{x},x_2)| \sim \langle \nu(t_0-t)+x_0 \rangle^{-3}, 
\end{equation*}
whenever $(x_1,x_2)\in \Omega_1$. As a consequence, we can argue as in the study of $\mathcal{B}_2$ above to get  
\begin{equation*}
 \begin{aligned}
    \Big|\int_{\mathbb{R}^2} Q_{\lambda,z_1} \eta^2 & \phi(\widetilde{x},x_2)\, \mathrm{d}x \Big|+\Big|\int_{\mathbb{R}^2} \partial_{x_1}(Q_{\lambda,z_1})\eta^2 \rho\, \mathrm{d}x\Big|\\
\lesssim & \Big(\frac{1}{n^2}+\|\epsilon_0\|_{H^{\frac{1}{2}}}^2+\|\epsilon_0\|_{H^{\frac{1}{2}}}^3+\omega \Big|\int_{\mathbb{R}^2} \epsilon_0 Q\, \mathrm{d}x\Big|\Big)|\nu(t_0-t)+x_0|^{-\frac{5}{2}^{-}}. 
\end{aligned}
\end{equation*}
Collecting all the previous estimates, we arrived at the differential inequality
\begin{equation*}
 \begin{aligned}
    \frac{d}{dt}\int_{\mathbb{R}^2} \eta^2 \rho \, \mathrm{d}x \lesssim & \Big(\frac{1}{n^2}+\|\epsilon_0\|_{H^{\frac{1}{2}}}^2+\|\epsilon_0\|_{H^{\frac{1}{2}}}^3+\omega \Big|\int_{\mathbb{R}^2} \epsilon_0 Q\, \mathrm{d}x\Big|\Big)\Big|\nu(t_0-t)+x_0\Big|^{-\frac{5}{2}^{-}}. 
\end{aligned}
\end{equation*}
By integration over $[t,t_0]$, the previous expression give us \eqref{Newmont1}.
\end{proof}
Finally, we can establish decay estimates for $\epsilon(s)$, $s\geq 0$. 
\begin{cor}\label{propdecayIMPR}
Let Let $0<\nu <\frac{3}{8}$ fixed. Consider $\omega>0$ and $M>0$ such that Lemma \ref{Monforeta} holds. Then for every $s\geq 0$ and $x_0\geq M$
    \begin{equation}\label{eqineqepsIMPR}
     \begin{aligned}
       \int_{\mathbb{R}}\int_{x_1\geq 4x_0} \epsilon^2(x_1,x_2,s)\, \mathrm{d}x_1  \mathrm{d}x_2\lesssim   \Big(\frac{1}{n^2}+\|\epsilon_0\|_{H^{\frac{1}{2}}}^2+\|\epsilon_0\|_{H^{\frac{1}{2}}}^3+\omega \Big|\int_{\mathbb{R}^2} \epsilon_0 Q\, \mathrm{d}x\Big|\Big)\frac{1}{|x_0|^{\frac{3}{2}^{-}}}.
     \end{aligned}   
    \end{equation}
Moreover, for $1<m<\frac{3}{2}$, it follows 
\begin{equation}\label{eqineqepsIMPR2}
\begin{aligned} \int_{\mathbb{R}}\int_{0}^{\infty} &\langle x_1\rangle^{m}\epsilon^2(x_1,x_2,s)\, \mathrm{d}x_1  \mathrm{d}x_2 \lesssim & \frac{1}{n^2}+\|\epsilon_0\|_{H^{\frac{1}{2}}}^2+\|\epsilon_0\|_{H^{\frac{1}{2}}}^3+\omega \Big|\int_{\mathbb{R}^2} \epsilon_0 Q\, \mathrm{d}x\Big|.
\end{aligned}    
\end{equation}
Where the implicit constants above depend on $M$.
\end{cor}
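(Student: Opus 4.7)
The plan is to deduce \eqref{eqineqepsIMPR} directly from the monotonicity formula in Lemma \ref{Monforeta} by exploiting positivity of $\rho$ at the later time and smallness of $\rho$ together with decay of $\epsilon_0$ at the initial time, and then to derive \eqref{eqineqepsIMPR2} by a layer-cake (Fubini) argument combined with the crude $L^2$ bound on $\epsilon$ for the low-$x_1$ region.

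\textbf{First inequality.} For fixed $s \geq 0$ and $x_0 \geq M$, I would apply Lemma \ref{Monforeta} with $t_0 = s$ and $t = 0$. Since $\lambda(0) = 1$ and $z_1(0) = 0$, we have $\eta(\cdot, 0) = \epsilon_0$. The key observation is that $\rho(x, s) = \varphi(\widetilde{x}, x_2) - \varphi(-x_0, x_2)$ satisfies $\rho(x, s) \geq c_0 > 0$ uniformly on the region $\{x_1 - z_1(s) \geq 2x_0\}$ provided $x_0 \geq M$ is large enough: indeed, there $\varphi(\widetilde{x}, x_2) \geq \varphi(x_0, x_2)$ tends to a strictly positive constant as $x_0/M$ grows, while $\varphi(-x_0, x_2) \lesssim x_0^{-2}$ by the decay of the integrand in \eqref{psidef} with $\gamma = 3/2$. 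Changing variables via \eqref{defieta} and using $\lambda(s) \in [1/2, 3/2]$, the set $\{y_1 \geq 4x_0\}$ in $\epsilon$-coordinates corresponds to $\{x_1 \geq 4\lambda(s) x_0 + z_1(s)\}$ in $\eta$-coordinates, which sits inside the positivity region of $\rho(\cdot, s)$.

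Next, I would estimate the initial contribution $\int \epsilon_0^2 \rho(x, 0)\, \mathrm{d}x$ by splitting $\mathbb{R}^2$ along the vertical line $x_1 = (x_0 + \nu s)/2$. On the left half, both $\widetilde{x}|_{t=0}$ and $x^*|_{t=0}$ are $\leq -(x_0+\nu s)/2$, hence $|\rho(x,0)| \lesssim x_0^{-2}$, producing a contribution $\lesssim \|\epsilon_0\|_{L^2}^2 \,x_0^{-2} \lesssim n^{-2} x_0^{-2}$. On the right half, the pointwise bound $|\epsilon_0(x)| \lesssim n^{-1}\langle x\rangle^{-3}$ (inherited from the decay in Theorem \ref{propL} and \eqref{Qdecay}) gives an even smaller contribution analogous to \eqref{decayE0}. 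Combining these two bounds with the error term in Lemma \ref{Monforeta} and dividing by $c_0$ yields \eqref{eqineqepsIMPR}.

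\textbf{Second inequality.} Writing $g_s(y) := \int_{\mathbb{R}} \epsilon^2(y, x_2, s)\, \mathrm{d}x_2$ and using the identity $\langle y \rangle^m = 1 + m\int_0^y r \langle r\rangle^{m-2}\, \mathrm{d}r$ valid for $y \geq 0$, together with Fubini, the target integral reduces to $\|\epsilon(s)\|_{L^2}^2 + m \int_0^\infty r \langle r\rangle^{m-2} \bigl(\int_r^\infty g_s(y)\, \mathrm{d}y\bigr)\, \mathrm{d}r$. For $r \in [0, 4M]$ the inner integral is bounded by $\|\epsilon(s)\|_{L^2}^2$, whose contribution is finite because $m > 0$. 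For $r \geq 4M$ I would apply \eqref{eqineqepsIMPR} with $x_0 = r/4$, obtaining $\int_r^\infty g_s\, \mathrm{d}y \lesssim r^{-\beta}$ for any prescribed $\beta < 3/2$. The hypothesis $m < 3/2$ then allows one to pick $\beta \in (m, 3/2)$ so that $\int_{4M}^\infty r^{m-1-\beta}\, \mathrm{d}r < \infty$, which concludes the bound.

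\textbf{Main obstacle.} The most delicate point is the bookkeeping in the change of variables between $\eta$- and $\epsilon$-coordinates while ensuring that the region $\{y_1 \geq 4x_0\}$ always lies within the positivity region of $\rho(\cdot, s)$ uniformly in $s \geq 0$; the factor $4$ (rather than, say, $2$) in the statement provides precisely the safety margin needed against the rescaling by $\lambda(s) \in [1/2, 3/2]$ and the translation by $z_1(s)$. A secondary technical point is ensuring that the uniform lower bound $\rho(\cdot, s) \geq c_0 > 0$ can be taken independently of $s$, which forces the restriction $x_0 \geq M$ to be compatible with the choice of $M$ already fixed in Lemma \ref{Monforeta}.
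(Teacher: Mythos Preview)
Your overall strategy matches the paper's, but there is one genuine gap in your argument for \eqref{eqineqepsIMPR}. You correctly note that $\rho(\cdot,s)\ge c_0>0$ on $\{x_1-z_1(s)\ge 2x_0\}$, and that the $\epsilon$-region $\{y_1\ge 4x_0\}$ lands inside this set after the change of variables. However, this alone does \emph{not} give $\int_{\mathbb R^2}\eta^2\rho(\cdot,s)\,\mathrm{d}x\ge c_0\int_{x_1-z_1(s)\ge 2x_0}\eta^2\,\mathrm{d}x$, because $\rho(\cdot,s)$ is \emph{negative} on $\{x_1<z_1(s)\}$: there $\widetilde{x}<x^\ast=-x_0$ and $\varphi$ is increasing. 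So the left-hand side of Lemma \ref{Monforeta} is a difference of two nonnegative pieces, and you cannot simply drop the complement of the positivity region.

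The paper closes exactly this gap: after changing to $\epsilon$-variables it splits the integral at $x_1=0$, observes that for $x_1<0$ one has $0\le \varphi(-x_0)-\varphi(x_1-x_0)=\int_{(x_1-x_0)/M}^{-x_0/M}\langle r\rangle^{-3}\,\mathrm{d}r\lesssim |x_0|^{-2}$, and moves this piece to the right-hand side, where it contributes $|x_0|^{-2}\|\epsilon(s)\|_{L^2}^2$, which by \eqref{H1/2estimate} is absorbed into the desired bound. Once you insert this step your argument is complete; your treatment of the initial term (the split at $(x_0+\nu s)/2$) is a valid variant of the paper's shortcut $\rho(\cdot,0)\le\varphi(x_1-x_0)$ followed by \eqref{almosteq3}, and your layer-cake derivation of \eqref{eqineqepsIMPR2} is equivalent to the paper's Fubini argument with weight $|x_0|^{m-1}$.
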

\begin{proof}
We deduce \eqref{eqineqepsIMPR} and \eqref{eqineqepsIMPR2} for the time variable $t$ as the result for variable $s\geq 0$ follows from the rescaling relation $\frac{ds}{dt}=\frac{1}{\lambda^2}$ and that $\lambda \sim 1$.  

We first consider \eqref{eqineqepsIMPR}, and take $t_0\geq 0$, and $t=0$. Using that $\lambda(0)=1$, $z_1(0)=0$, and that $\varphi\geq 0$ with $x_1\mapsto \varphi(x_1,x_2)$ being an increasing function, we deduce from \eqref{almosteq3} that
\begin{equation}\label{Newmont2}
\begin{aligned}
 \int_{\mathbb{R}^2}\eta^2(x_1,x_2,0) \rho(x_1,x_2,0) \, \mathrm{d}x=&\int_{\mathbb{R}^2} \epsilon_0^2(x_1,x_2)\big(\varphi(x_1-\nu t_0-x_0,x_2)-\varphi(-\nu t_0-x_0,x_2)\big)\, \mathrm{d}x \\
\leq & \int_{\mathbb{R}^2} \epsilon_0^2(x_1,x_2)\varphi(x_1-x_0,x_2)\, \mathrm{d}x\\
\lesssim & \frac{1}{n^2}|x_0|^{-2}.
\end{aligned}    
\end{equation}
Changing variables and using the definition of $\eta$, we have
\begin{equation}\label{Newmont3}
\begin{aligned}
  \int_{\mathbb{R}^2}\eta^2(x_1,x_2,t_0) &\rho(x_1,x_2,t_0) \, \mathrm{d}x\\
  =& \int_{\mathbb{R}^2}\eta^2(x_1+z_1(t_0),x_2,t_0)\big(\varphi(x_1-x_0,x_2)-\varphi(-x_0,x_2)\big) \, \mathrm{d}x \\
  =& \frac{1}{\lambda^2(t_0)}\int_{\mathbb{R}^2} \epsilon^2\Big(\frac{x_1}{\lambda(t_0)},\frac{x_2}{\lambda(t_0)},t_0\Big)\big(\varphi(x_1-x_0,x_2)-\varphi(-x_0,x_2)\big) \, \mathrm{d}x.
\end{aligned}    
\end{equation}
Plugging \eqref{Newmont2} and \eqref{Newmont3} into \eqref{Newmont1}, we arrive at 
\begin{equation}\label{estimatediff1}
\begin{aligned}\frac{1}{\lambda^2(t_0)} & \int_{\mathbb{R}^2} \epsilon^2\Big(\frac{x_1}{\lambda(t_0)},\frac{x_2}{\lambda(t_0)},t_0\Big)\big(\varphi(x_1-x_0,x_2)-\varphi(-x_0,x_2)\big) \, \mathrm{d}x\\
\lesssim & \frac{1}{n^2}|x_0|^{-2}+ \Big(\frac{1}{n^2}+\|\epsilon_0\|_{H^{\frac{1}{2}}}^2+\|\epsilon_0\|_{H^{\frac{1}{2}}}^3+\omega \Big|\int_{\mathbb{R}^2} \epsilon_0 Q\, \mathrm{d}x\Big|\Big)|x_0|^{-\frac{3}{2}^{-}}.
\end{aligned}    
\end{equation}
Hence, by splitting the integral on the left-hand side above according to the regions $x_1<0$ and $x_1>0$, we obtain 
\begin{equation}\label{estimatediff2}
\begin{aligned}
\frac{1}{\lambda^2(t_0)} & \int_{\mathbb{R}}\int_{x_1>0} \epsilon^2\Big(\frac{x_1}{\lambda(t_0)},\frac{x_2}{\lambda(t_0)},t_0\Big)\big(\varphi(x_1-x_0,x_2)-\varphi(-x_0,x_2)\big) \, \mathrm{d}x_1  \mathrm{d}x_2\\
\lesssim & \frac{1}{\lambda^2(t_0)} \int_{\mathbb{R}}\int_{x_1<0} \epsilon^2\Big(\frac{x_1}{\lambda(t_0)},\frac{x_2}{\lambda(t_0)},t_0\Big)\big(\varphi(-x_0,x_2)-\varphi(x_1-x_0,x_2)\big) \, \mathrm{d}x_1  \mathrm{d}x_2\\
&+ \frac{1}{n^2}|x_0|^{-2}+ \Big(\frac{1}{n^2}+\|\epsilon_0\|_{H^{\frac{1}{2}}}^2+\|\epsilon_0\|_{H^{\frac{1}{2}}}^3+\omega \Big|\int_{\mathbb{R}^2} \epsilon_0 Q\, \mathrm{d}x\Big|\Big)|x_0|^{-\frac{3}{2}^{-}}.
\end{aligned}    
\end{equation}
Now, when $x_1<0$, the definition of $\varphi$ establishes 
\begin{equation*}
\begin{aligned}
0\leq  \varphi(-x_0,x_2)-\varphi(x_1-x_0,x_2)=\int_{\frac{x_1-x_0}{M}}^{\frac{-x_0}{M}}\frac{dr}{\langle r\rangle^{3}}\lesssim |x_0|^{-2}.
\end{aligned}    
\end{equation*}
Consequently, \eqref{H1/2estimate} and changing variables give us
\begin{equation}\label{estimatediff2.1}
\begin{aligned}
\frac{1}{\lambda^2(t_0)} &\int_{\mathbb{R}}\int_{x_1<0} \epsilon^2\Big(\frac{x_1}{\lambda(t_0)},\frac{x_2}{\lambda(t_0)},t_0\Big)\big(\varphi(-x_0,x_2)-\varphi(x_1-x_0,x_2)\big) \, \mathrm{d}x_1  \mathrm{d}x_2 \\
\lesssim& |x_0|^{-2}\|\epsilon(t_0)\|_{L^2}^2\\
\lesssim& \Big(\|\epsilon_0\|_{H^{\frac{1}{2}}}^2+\|\epsilon_0\|_{H^{\frac{1}{2}}}^3+\omega \Big|\int_{\mathbb{R}^2} \epsilon_0 Q\, \mathrm{d}x\Big|\Big)|x_0|^{-2}.
\end{aligned}    
\end{equation}
Next, when $x_0\geq M$, and $x_1\geq 2x_0$, we observe
\begin{equation*}
\begin{aligned}
\varphi(x_1-x_0,x_2)-\varphi(-x_0,x_2)=\int^{\frac{x_1-x_0}{M}}_{\frac{-x_0}{M}}\frac{dr}{\langle r\rangle^{3}}\geq & \int^{\frac{x_0}{M}}_{\frac{-x_0}{M}}\frac{dr}{\langle r\rangle^{3}}
\geq \int^{1}_{-1}\frac{dr}{\langle r\rangle^{3}}=:C_1>0.
\end{aligned}
\end{equation*}
Note also that $\varphi(x_1-x_0,x_2)-\varphi(-x_0,x_2)\geq 0$ for all $x_1\geq 0$. From the previous inequality, and changing variables, we find
\begin{equation}\label{estimatediff3}
\begin{aligned}
\frac{1}{\lambda^2(t_0)}  \int_{\mathbb{R}}\int_{x_1>0} \epsilon^2\Big(\frac{x_1}{\lambda(t_0)},\frac{x_2}{\lambda(t_0)},t_0\Big) & \big(\varphi(x_1-x_0,x_2)-\varphi(-x_0,x_2)\big) \, \mathrm{d}x_1  \mathrm{d}x_2\\
\geq & \frac{C_1}{\lambda^2(t_0)} \int_{\mathbb{R}}\int_{x_1\geq 2x_0} \epsilon^2\Big(\frac{x_1}{\lambda(t_0)},\frac{x_2}{\lambda(t_0)},t_0\Big)\, \mathrm{d}x_1  \mathrm{d}x_2\\
\geq & C_1 \int_{\mathbb{R}}\int_{x_1\geq \frac{2x_0}{\lambda(t_0)}} \epsilon^2(x_1,x_2,t_0)\, \mathrm{d}x_1  \mathrm{d}x_2\\
\geq & C_1 \int_{\mathbb{R}}\int_{x_1\geq 4x_0} \epsilon^2(x_1,x_2,t_0)\, \mathrm{d}x_1  \mathrm{d}x_2,
\end{aligned}    
\end{equation}
where we have used that $\frac{1}{\lambda(t_0)}\leq 2$. Combining \eqref{estimatediff2}, \eqref{estimatediff2.1}, and \eqref{estimatediff3}, we obtain \eqref{eqineqepsIMPR}

Next, we deal with \eqref{eqineqepsIMPR2}. Multiplying \eqref{eqineqepsIMPR} by $|x_0|^{m-1}$ for some $1<m<\frac{3}{2}$ fixed and integrating over $[M,\infty)$ the $x_0$-variable, we deduce
\begin{equation*}
\begin{aligned} \int_{M}^{\infty}\int_{\mathbb{R}}\int_{x_1\geq 4x_0} &|x_0|^{m-1}\epsilon^2(x_1,x_2,t_0)\, \mathrm{d}x_1  \mathrm{d}x_2  \mathrm{d}x_0\\
\lesssim & \Big(\frac{1}{n^2}+\|\epsilon_0\|_{H^{\frac{1}{2}}}^2+\|\epsilon_0\|_{H^{\frac{1}{2}}}^3+\omega \Big|\int_{\mathbb{R}^2} \epsilon_0 Q\, \mathrm{d}x\Big|\Big)\Big(\int_{M}^{\infty}|x_0|^{m-1-\frac{3}{2}^{-}}\, \mathrm{d}x_0\Big).
\end{aligned}    
\end{equation*}
Hence, by using Fubini's theorem and \eqref{H1/2estimate} it follows that
\begin{equation*}
\begin{aligned} \int_{\mathbb{R}}\int_{x_1\geq 4M} &|x_1|^{m}\epsilon^2(x_1,x_2,t_0)\, \mathrm{d}x_1  \mathrm{d}x_2 \lesssim & \frac{1}{n^2}+\|\epsilon_0\|_{H^{\frac{1}{2}}}^2+\|\epsilon_0\|_{H^{\frac{1}{2}}}^3+\omega \Big|\int_{\mathbb{R}^2} \epsilon_0 Q\, \mathrm{d}x\Big|,
\end{aligned}    
\end{equation*}
which together with \eqref{H1/2estimate} and the estimate 
\begin{equation*}
\begin{aligned} \int_{\mathbb{R}}\int_0^{4M} |x_1|^{m}\epsilon^2(x_1,x_2,t_0)\, \mathrm{d}x_1  \mathrm{d}x_2 \leq & (4M)^{m}\|\epsilon(t_0)\|^2_{L^2}\\
\lesssim & \Big(\|\epsilon_0\|_{H^{\frac{1}{2}}}^2+\|\epsilon_0\|_{H^{\frac{1}{2}}}^3+\omega \Big|\int_{\mathbb{R}^2} \epsilon_0 Q\, \mathrm{d}x\Big|\Big),
\end{aligned}    
\end{equation*}
give us \eqref{eqineqepsIMPR2} with $|x_1|^{m}$ instead of $\langle x_1\rangle^{m}$. Nevertheless, the estimate with weight $|x_1|^{m}$ deduce above and \eqref{H1/2estimate}, we obtain \eqref{eqineqepsIMPR2}. 
\end{proof}
We define a scaled version of the viral type functional $J_A$, $A\geq 1$ given by \eqref{Jfunctional}, 
\begin{equation*}
    K_A(s)=\lambda(s)\big(J_A(s)-\kappa\big),
\end{equation*}
where 
\begin{equation*}
\kappa:= \frac{1}{2}\int_{\mathbb{R}} \Big( \int_{\mathbb{R}} \Lambda Q(x_1,x_2)\, \mathrm{d}x_1 \Big)^2\, \mathrm{d}x_2. 
\end{equation*}
The functional $K_A$ is inspired from  \cite{MartelMerle2001}. By \eqref{Jesteq} and \eqref{smalleepsicond}, we have
\begin{equation}\label{upperboundK}
    |K_A(s)|\lesssim (1+A^{\frac{1}{2}})\|\epsilon(s)\|_{L^2}+\kappa<\infty,
\end{equation}
for all $s\geq 0$. Additionally, by Lemma \ref{eqJA}
\begin{equation}\label{eqforK}
 \begin{aligned}
   \frac{d}{ds}K_A(s)=\lambda\Big(1-\Big(\frac{1}{\lambda}\frac{dz_1}{ds}-1\Big)\Big)\int_{\mathbb{R}^2} \epsilon Q\, \mathrm{d}x+\lambda R(\epsilon,A),
 \end{aligned}   
\end{equation}
where $R(\epsilon,A)$ satisfies \eqref{remaindereq}. In the following theorem, we get key lower-bound estimates for $\frac{d}{ds}K_A(s)$.
\begin{thm}\label{theolowerbound}
There exist $\omega_0>0$ sufficiently small, $n\in \mathbb{Z}^{+}$ sufficiently large, and $A\geq 1$ sufficiently large such that
\begin{equation}\label{lowerBound0}
    \frac{d}{ds}K_A(s)\geq \frac{1}{8 n}\bigg(\|Q\|_{L^2}^2-\frac{\big(\int_{\mathbb{R}^2} Q\psi_0 \mathrm{d}x\big)^2}{\|\psi_0\|_{L^2}^2}\bigg).
\end{equation}
\end{thm}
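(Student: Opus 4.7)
My plan starts from the identity \eqref{eqforK}, which decomposes $\frac{d}{ds}K_A(s)$ into the principal term $\lambda\bigl(1-(\frac{1}{\lambda}\frac{dz_1}{ds}-1)\bigr)\int \epsilon Q\,\mathrm{d}x$ and the remainder $\lambda R(\epsilon,A)$. The strategy is to show the principal term is bounded below by a positive quantity of size $\sim 1/n$ and that the remainder is a small fraction of it for appropriate choices of $\omega_0$, $n$, and $A$. Set $\Sigma := \|Q\|_{L^2}^2 - (\int_{\mathbb{R}^2} Q\psi_0\,\mathrm{d}x)^2/\|\psi_0\|_{L^2}^2$, which is strictly positive: if $Q$ and $\psi_0$ were proportional, then $LQ=-\mu_0 Q$ combined with $LQ=-\tfrac{1}{2}Q^2$ (from \eqref{EQ:groundState}) would force $Q\equiv 2\mu_0$, contradicting \eqref{poldecayGS}.

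For the principal term, mass conservation from Lemma \ref{Energyconsrlemma} gives
\begin{equation*}
\int_{\mathbb{R}^2}Q\epsilon(s)\,\mathrm{d}x = \int_{\mathbb{R}^2}Q\epsilon_0\,\mathrm{d}x + \frac{1}{2}\|\epsilon_0\|_{L^2}^2 - \frac{1}{2}\|\epsilon(s)\|_{L^2}^2,
\end{equation*}
and a direct computation with $\epsilon_0 = \frac{1}{n}(Q+a\psi_0)$, $a=-\int Q\psi_0/\|\psi_0\|_{L^2}^2$, yields $\int Q\epsilon_0\,\mathrm{d}x = \Sigma/n$. Using \eqref{H1/2estimate} together with $\|\epsilon_0\|_{H^{1/2}}\lesssim 1/n$ and $|\int Q\epsilon_0|=\Sigma/n$ gives $\|\epsilon(s)\|_{L^2}^2\lesssim 1/n^2+\omega\Sigma/n$, so choosing $n$ large and $\omega_0$ small forces $\int Q\epsilon(s)\,\mathrm{d}x\geq \frac{15\Sigma}{16 n}$. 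Combined with $\lambda\geq 1/2$ from \eqref{smalleepsicond2} and $|\frac{1}{\lambda}\frac{dz_1}{ds}-1|\leq 1/2$ from \eqref{Smallnessdiff}, the principal term is at least $\frac{15\Sigma}{64 n}$.

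For the remainder, \eqref{newvesteq1} gives $|\frac{1}{\lambda}\frac{d\lambda}{ds}|+|\frac{1}{\lambda}\frac{dz_1}{ds}-1|\lesssim (\omega/n)^{1/2}$, and $\|\epsilon\|_{L^2}\lesssim (\omega/n)^{1/2}$. Most summands in \eqref{remaindereq} are bounded by $C\omega/n$, $C(\omega/n)^{1/2}/A^{1/2}$, or $C(\omega/n)^{1/2}/A^{\gamma_1}$, all $\leq \Sigma/(100n)$ for $\omega_0$ small, $n$ large, and $A$ large (possibly depending on $n$). The summand $|\frac{1}{\lambda}\frac{d\lambda}{ds}|\cdot A^{1/2}\|\epsilon\|_{L^2([A,\infty)\times\mathbb{R})}$ is handled via \eqref{eqineqepsIMPR} with $\gamma=3/2$: taking $x_0=A/4$ gives $\|\epsilon\|_{L^2([A,\infty)\times\mathbb{R})}\lesssim (\omega/n)^{1/2}/A^{3/4^-}$, so this term is $\lesssim \omega/(nA^{1/4^-})$.

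The truly delicate summand is $|\frac{1}{\lambda}\frac{d\lambda}{ds}|\cdot |\int \epsilon\, x_2\partial_{x_2}F\chi_A\,\mathrm{d}x|$, for which a direct Cauchy--Schwarz produces a spurious $A^{1/2}$ factor from the $x_1$-support of $\chi_A$. I would split the integral at $x_1=0$. On $\{x_1>0\}$: apply Cauchy--Schwarz against the weight $\langle x_1\rangle^{-m/2}$ for some $m\in(1,3/2)$; the weighted factor $(\int \langle x_1\rangle^m\epsilon^2)^{1/2}\lesssim (\omega/n)^{1/2}$ comes from Corollary \ref{propdecayIMPR}, equation \eqref{eqineqepsIMPR2}, and the complementary factor is bounded uniformly in $A$ because $\sup_{x_1}|x_2\partial_{x_2}F(x_1,x_2)|\lesssim \langle x_2\rangle^{-2}$ by \eqref{decayFinyvar0}. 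On $\{x_1<0\}$: the improved decay \eqref{decayFinyvar} with $\alpha_1=2$, $\alpha_3>1$, $\alpha_2+\alpha_3=2$ yields $|x_2\partial_{x_2}F|\lesssim \langle x_2\rangle^{-1}\langle x_1\rangle^{-1}$, making $\|x_2\partial_{x_2}F\chi_A\|_{L^2(\{x_1<0\})}$ bounded independently of $A$, and Cauchy--Schwarz against $\|\epsilon\|_{L^2}\lesssim (\omega/n)^{1/2}$ closes the estimate at $O(\omega/n)$. This is the main technical obstacle: precisely the weighted-in-$x_1$ decay of $\epsilon$ provided by Corollary \ref{propdecayIMPR} is what bypasses the spurious $A^{1/2}$ factor. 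Combining all estimates gives $\frac{d}{ds}K_A(s)\geq \frac{15\Sigma}{64 n}-\frac{\Sigma}{16 n}>\frac{\Sigma}{8 n}$, as claimed.
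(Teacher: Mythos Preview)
Your proof is correct and follows essentially the same route as the paper's: both start from \eqref{eqforK}, use mass conservation to extract the principal term $\sim \Sigma/n$, and then bound the remainder \eqref{remaindereq} term by term, with the crucial control of $A^{1/2}\|\epsilon\|_{L^2([A,\infty)\times\mathbb{R})}$ and $|\int \epsilon\,x_2\partial_{x_2}F\chi_A|$ coming from Corollary \ref{propdecayIMPR} exactly as you describe. The only cosmetic differences are that the paper rewrites $2\int Q\epsilon = \mathcal{M}_0 - \int\epsilon^2$ before estimating (whereas you keep $\int Q\epsilon$ directly), and on $\{x_1<0\}$ the paper picks $\alpha_1=\alpha_2=\tfrac54$, $\alpha_3=\tfrac32$ in \eqref{decayFinyvar} rather than your $\alpha_1=2$; note that your stated bound $\langle x_1\rangle^{-1}$ is the borderline case $\alpha_2=1$ which is not quite admissible (it forces $\alpha_3=1$), but any choice $\tfrac12<\alpha_2<1$ makes your $L^2$ argument go through verbatim.
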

\begin{proof}[Proof of Theorem \ref{theolowerbound}]
By mass conservation Lemma \ref{Energyconsrlemma}, we have $2\int_{\mathbb{R}^2} \epsilon Q\, \mathrm{d}x=\mathcal{M}_0-\int_{\mathbb{R}^2} \epsilon^2\, \mathrm{d}x$, which shows that \eqref{eqforK} can be rewritten as follows
\begin{equation}\label{lowerbound1}
\begin{aligned}
    \frac{d}{ds}K_A(s)=\frac{\lambda}{2}\Big(1-\Big(\frac{1}{\lambda}\frac{dz_1}{ds}-1\Big)\Big)\mathcal{M}_0-\frac{\lambda}{2}\Big(1-\Big(\frac{1}{\lambda}\frac{dz_1}{ds}-1\Big)\Big)\int_{\mathbb{R}^2} \epsilon^2\, \mathrm{d}x+\lambda R(\epsilon,A).
\end{aligned}
\end{equation}
We observe that $\mathcal{M}_0\geq 2\int_{\mathbb{R}^2} \epsilon_0 Q\, \mathrm{d}x$, then from \eqref{inicondtins} and \eqref{Smallnessdiff}, it follows that
\begin{equation}\label{lowerbound2}
 \begin{aligned}
 \frac{\lambda}{2}\Big(1-\Big(\frac{1}{\lambda}\frac{dz_1}{ds}-1\Big)\Big)\mathcal{M}_0\geq & \frac{\lambda}{2 n}\int_{\mathbb{R}^2} (Q+a\psi_0)Q\, \mathrm{d}x \\
=& \frac{1}{4 n}\bigg(\|Q\|_{L^2}^2-\frac{\big(\int_{\mathbb{R}^2} Q\psi_0 \mathrm{d}x\big)^2}{\|\psi_0\|_{L^2}^2}\bigg),
 \end{aligned}   
\end{equation}
where we have also used that $\lambda \geq \frac{1}{2}$. From \eqref{H1/2estimate} and the initial condition \eqref{inicondtins}, we deduce
\begin{equation}\label{ineqepsilondecay}
 \begin{aligned}
  \|\epsilon(s)\|_{H^{\frac{1}{2}}}^2 \lesssim &   \|\epsilon_0\|_{H^{\frac{1}{2}}}^2+\|\epsilon_0\|_{H^{\frac{1}{2}}}^3+\omega \Big|\int_{\mathbb{R}^2} \epsilon_0 Q\, \mathrm{d}x\Big| \\
  \lesssim & \frac{1}{n^2}\big(\|Q+a\psi_0\|_{H^{\frac{1}{2}}}^2+\frac{1}{n}\|Q+a\psi_0\|_{H^{\frac{1}{2}}}^3\big)+ \frac{\omega}{n}\bigg(\|Q\|_{L^2}^2-\frac{\big(\int_{\mathbb{R}^2} Q\psi_0 \mathrm{d}x\big)^2}{\|\psi_0\|_{L^2}^2}\bigg)\\
 \lesssim &\frac{1}{n^2}+\frac{\omega}{n},
 \end{aligned}   
\end{equation}
provided that $n$ is large enough. Thus, it follows
\begin{equation}\label{EpsilonLest1}
\bigg|\frac{\lambda}{2}\Big(1-\Big(\frac{1}{\lambda}\frac{dz_1}{ds}-1\Big)\Big)\int_{\mathbb{R}^2} \epsilon^2\, \mathrm{d}x \bigg|\lesssim    \frac{1}{n^2}+\frac{\omega}{n}.
\end{equation}
We recall that $R(\epsilon,A)$ satisfies \eqref{remaindereq} for some $\gamma_1\in (0,1)$ that we fix from now on. On the other hand, using \eqref{newvesteq1} we get
\begin{equation}
\begin{aligned}
|\lambda R(\epsilon,A)|\lesssim & \Big(1+\frac{1}{A}\Big)\|\epsilon\|^2_{L^2}+\Big(\frac{1}{A}+\frac{1}{A^{\frac{1}{2}}}\Big)\|\epsilon\|_{L^2} \\
&+ \mathcal{G}(\epsilon)\bigg(\frac{1}{A^{\gamma_1}}+\|\epsilon\|_{L^2} +A^{\frac{1}{2}}\|\epsilon\|_{L^2([A,\infty)\times \mathbb{R})}+\Big|\int_{\mathbb{R}^2} \epsilon x_2 \partial_{x_2}F \chi_A\, \mathrm{d}x\Big|\bigg)   \\
&+\mathcal{G}(\epsilon)\bigg(\frac{1}{A}+\Big(1+\frac{1}{A^{\frac{1}{2}}}+\frac{1}{A}\Big)\|\epsilon\|_{L^2}\bigg),
\end{aligned}   
\end{equation}
where from \eqref{ineqepsilondecay}, one has
\begin{equation*}
\begin{aligned}
\mathcal{G}(\epsilon)=&\Big(\|\epsilon_0\|_{H^{\frac{1}{2}}}^2+\|\epsilon_0\|_{H^{\frac{1}{2}}}^3+ \omega\Big|\int_{\mathbb{R}^2} \epsilon_0 Q\, \mathrm{d}x\Big|\Big)^{\frac{1}{2}} \lesssim & \Big(\frac{1}{n^2}+\frac{\omega}{n}\Big)^{\frac{1}{2}}.
\end{aligned}
\end{equation*}
Consequently, we deduce
\begin{equation}\label{LambdaResti}
\begin{aligned}
|\lambda R(\epsilon,A)|\lesssim &
 \Big(1+\frac{1}{A}\Big)\Big(\frac{1}{n^2}+\frac{\omega}{n}\Big)+ \Big(\frac{1}{A}+\frac{1}{A^{\gamma_1}}+\frac{1}{A^{\frac{1}{2}}}\Big)\Big(\frac{1}{n^2}+\frac{\omega}{n}\Big)^{\frac{1}{2}}\\
 &+\Big(\frac{1}{n^2}+\frac{\omega}{n}\Big)^{\frac{1}{2}}\bigg(A^{\frac{1}{2}}\|\epsilon\|_{L^2([A,\infty)\times \mathbb{R})}+\Big|\int_{\mathbb{R}^2} \epsilon x_2 \partial_{x_2}F \chi_A\, \mathrm{d}x\Big|\bigg). 
\end{aligned}    
\end{equation}
Let us estimate the last two terms of the above inequality. We take $A\geq M$ with $A\geq 1$, where $M>0$ is given by Corollary \ref{propdecayIMPR}. Then by \eqref{eqineqepsIMPR}, we get
\begin{equation*}
 \begin{aligned}
 A^{\frac{1}{2}}\|\epsilon\|_{L^2([A,\infty)\times \mathbb{R})}\lesssim   \frac{A^{\frac{1}{2}}}{A^{\frac{3}{4}^{-}}} \Big(\frac{1}{n^2}+\frac{\omega}{n}\Big)^{\frac{1}{2}}\lesssim \frac{1}{A^{\frac{1}{4}^{-}}} \Big(\frac{1}{n^2}+\frac{\omega}{n}\Big)^{\frac{1}{2}}.
 \end{aligned}   
\end{equation*}
Next, we write
\begin{equation*}
\begin{aligned}
\int_{\mathbb{R}^2} \epsilon x_2 \partial_{x_2}F \chi_A\, \mathrm{d}x=&\int_{\mathbb{R}}\int_{-\infty}^0 \epsilon x_2 \partial_{x_2}F \chi_A\, \mathrm{d}x_1  \mathrm{d}x_2+\int_{\mathbb{R}}\int_{0}^{\infty} \epsilon x_2 \partial_{x_2}F \chi_A\, \mathrm{d}x_1  \mathrm{d}x_2.   
\end{aligned}    
\end{equation*}
By using \eqref{decayFinyvar} with $l=1$, $\alpha_1=\alpha_2=\frac{5}{4}$, $\alpha_3=\frac{3}{2}$, Cauchy–Schwarz inequality shows
\begin{equation*}
\begin{aligned}
\Big|\int_{\mathbb{R}}\int_{-\infty}^0 \epsilon x_2 \partial_{x_2}F \chi_A\, \mathrm{d}x_1  \mathrm{d}x_2\Big| \lesssim &  \|\chi_A\|_{L^{\infty}}\int_{\mathbb{R}}\int_{-\infty}^0  |\epsilon| \langle x_1\rangle^{-\frac{5}{4}}\langle x_2\rangle^{-\frac{5}{4}}\, \mathrm{d}x_1  \mathrm{d}x_2\\
\lesssim & \|\epsilon\|_{L^2}\\
\lesssim & \Big(\frac{1}{n^2}+\frac{\omega}{n}\Big)^{\frac{1}{2}}.
\end{aligned}    
\end{equation*}
Using \eqref{decayFinyvar0} with $l=1$, $\alpha_1=\alpha_2=2$, we have $\sup_{x_1\in \mathbb{R}}|\partial_{x_2} F(x_1,x_2)|\lesssim \langle x_2\rangle^{-2}$. Thus, by taking $\frac{1}{2}<m_1<\frac{3}{4}$ fixed, we apply H\"older's inequality and \eqref{eqineqepsIMPR2} with $m=2m_1$ to obtain 
\begin{equation*}
\begin{aligned}
\Big|\int_{\mathbb{R}}&\int_{0}^{\infty} \epsilon x_2 \partial_{x_2}F \chi_A\, \mathrm{d}x_1  \mathrm{d}x_2\Big|\\
\lesssim & \bigg(\int_{\mathbb{R}}\int_0^{\infty} \langle x_1\rangle^{2m_1}\epsilon^2(x_1,x_2,s)\, \mathrm{d}x_1  \mathrm{d}x_2\bigg)^{\frac{1}{2}}\|\langle x_1\rangle^{-m_1}|x_2|\sup_{x_1\in \mathbb{R}}|\partial_{x_2} F(x_1,x_2)|\| _{L^2(\mathbb{R}^2)}\\
\lesssim& \Big(\frac{1}{n^2}+\frac{\omega}{n}\Big)^{\frac{1}{2}}\|\langle x_1\rangle^{-m_1}\langle x_2 \rangle^{-1}\| _{L^2(\mathbb{R}^2)}\\
\lesssim& \Big(\frac{1}{n^2}+\frac{\omega}{n}\Big)^{\frac{1}{2}}.
\end{aligned}    
\end{equation*}
Therefore, inserting the previous inequalities in \eqref{LambdaResti}, we obtain  
\begin{equation}\label{LambdaResti2}
\begin{aligned}
|\lambda R(\epsilon,A)|\lesssim &
 \Big(1+\frac{1}{A}\Big)\Big(\frac{1}{n^2}+\frac{\omega}{n}\Big)+ \Big(\frac{1}{A}+\frac{1}{A^{\gamma_1}}+\frac{1}{A^{\frac{1}{2}}}\Big)\Big(\frac{1}{n^2}+\frac{\omega}{n}\Big)^{\frac{1}{2}}\\
 &+\Big(\frac{1}{n^2}+\frac{\omega}{n}\Big)\Big(1+\frac{1}{A^{\frac{1}{4}^{-}}}\Big)\\
 =:&\widetilde{R}(n,\omega,A).
\end{aligned}    
\end{equation}
Note that $\widetilde{R}(n,w,A)=o(\frac{1}{n})$ as $\omega \to 0^{+}$, and $A\to \infty$. Thus, using also \eqref{EpsilonLest1}, we can take $\omega>0$ small, $n\in \mathbb{Z}^{+}$ and $A\geq 1$ large such that 
\begin{equation}\label{lowerbound3}
\begin{aligned}
 \bigg|\frac{\lambda}{2}\Big(1-\Big(\frac{1}{\lambda}\frac{dz_1}{ds}-1\Big)\Big)\int_{\mathbb{R}^2} \epsilon^2\, \mathrm{d}x \bigg|+|\lambda R(\epsilon,A)|\leq \frac{1}{8 n}\bigg(\|Q\|_{L^2}^2-\frac{\big(\int_{\mathbb{R}^2} Q\psi_0 \mathrm{d}x\big)^2}{\|\psi_0\|_{L^2}^2}\bigg).   
\end{aligned}
\end{equation}
Combining \eqref{lowerbound1}, \eqref{lowerbound2}, and \eqref{lowerbound3} completes the desired estimate.
\end{proof}
\begin{proof}[Conclusion proof of Theorem \ref{MainInsta}]
Integrating \eqref{lowerBound0} between $0$ and $s$ shows
\begin{equation*}
\begin{aligned}
    K_A(s)\geq K_A(0)+\frac{s}{8 n}\bigg(\|Q\|_{L^2}^2-\frac{\big(\int_{\mathbb{R}^2} Q\psi_0 \mathrm{d}x\big)^2}{\|\psi_0\|_{L^2}^2}\bigg).
\end{aligned}    
\end{equation*}
Since $|K_A(0)|\lesssim 1$, the inequality above yields $K_A(s)\to \infty$ as $s\to \infty$. However, this contradicts \eqref{upperboundK}. This  contradiction arise from assuming that $Q$ is stable, thus the instability result in Theorem \ref{MainInsta} must follow. 
\end{proof}
 \section*{Acknowledgment}
The authors thank the Instituto Nacional de Matem\'atica Pura e Aplicada (IMPA) in Rio de Janeiro, Brazil, for their hospitality. They also wish to express their gratitude to the Organizing Committee of the Workshop in Nonlinear PDEs, held at IMPA, Rio de Janeiro, from August 19–23, 2024, where this research was initiated.
{\footnotesize
\bibliographystyle{acm}
\bibliography{Ref-monotonicity}
}
\end{document}